\documentclass[14pt,a4paper,reqno]{amsart}
\usepackage{color}
\usepackage{amsfonts,amsmath,amssymb,amsxtra,url,float}
\allowdisplaybreaks[4]
\usepackage[colorlinks,linkcolor=RoyalBlue,anchorcolor=Periwinkle,citecolor=Orange,urlcolor=Green]{hyperref}
\usepackage[usenames,dvipsnames]{xcolor}
\setlength{\unitlength}{2.7pt}
\usepackage{geometry} \geometry{left=2.7cm,right=2.7cm, top=2.5cm, bottom=2.5cm}
\usepackage{rotating}
\usepackage{lscape}
\usepackage{enumitem}
\DeclareSymbolFont{largesymbol}{OMX}{yhex}{m}{n}
\DeclareMathAccent{\Widehat}{\mathord}{largesymbol}{"62}
\usepackage{multirow}
\usepackage{graphicx}
\usepackage{subfigure}
\usepackage{tikz}
\usetikzlibrary{decorations.pathreplacing,decorations.markings}
\tikzset{
on each segment/.style={
decorate,
decoration={
show path construction,
moveto code={},
lineto code={
\path [#1]
(\tikzinputsegmentfirst) -- (\tikzinputsegmentlast);
},
curveto code={
\path [#1] (\tikzinputsegmentfirst)
.. controls
(\tikzinputsegmentsupporta) and (\tikzinputsegmentsupportb)
..
(\tikzinputsegmentlast);
},
closepath code={
\path [#1]
(\tikzinputsegmentfirst) -- (\tikzinputsegmentlast);
},
},
},
mid arrow/.style={postaction={decorate,decoration={
markings,
mark=at position 0.6 with {\arrow[#1]{stealth}} %
}}},
}
\usetikzlibrary{arrows}
\usetikzlibrary{trees}

\usetikzlibrary{matrix}
\usetikzlibrary{patterns}
\usetikzlibrary{shadings}
\usepackage[all]{xy}
\usepackage{mathdots}
\usepackage{dsfont}
\usepackage{cite}
\usepackage{mathrsfs}
\usepackage{multicol}
\numberwithin{figure}{section}
\usepackage{marginnote}
\setlength\marginparwidth{1.6cm}
\setlength\marginparsep{2mm}
\usepackage{graphicx}
\usepackage{multicol}

\usepackage{fancyhdr}
\setcounter{tocdepth}{3}
\setcounter{secnumdepth}{3} 
\newcommand{\checks}[1]{{\color{black}{#1}}} %
\newcommand{\checkLiu}[1]{{\color{black}{#1}}}
\newtheorem{theorem}{Theorem}[section]
\newtheorem{lemma}[theorem]{Lemma}
\newtheorem{corollary}[theorem]{Corollary}
\newtheorem{main theorem}[theorem]{Main Theorem}
\newtheorem{proposition}[theorem]{Proposition}
\newtheorem{definition}[theorem]{Definition}

\newtheorem{remark}[theorem]{Remark}
\newtheorem{example}[theorem]{Example}

\usetikzlibrary{arrows}
\numberwithin{equation}{section}
\def\<{\langle}
\def\>{\rangle}
\def\NN{\mathbb{N}}

\def\RR{\mathbb{R}}
\def\II{\mathbb{I}}
\newcommand{\Pic}{F{\tiny{IGURE}}\ }

\newcommand{\kk}{\mathds{k}}
\newcommand{\Q}{\mathcal{Q}}
\newcommand{\I}{\mathcal{I}}

\newcommand{\Hom}{\mathrm{Hom}}
\newcommand{\End}{\mathrm{End}}
\newcommand{\Ext}{\mathrm{Ext}}
\newcommand{\op}{\mathrm{op}}

\newcommand{\sectcolor}{}
\def\s{\mathfrak{s}}
\def\t{\mathfrak{t}}

\def\itLamb{\mathit{\Lambda}}
\def\im{\mathrm{Im}}

\def\bfS{\mathbf{S}}

\def\id{\mathbf{1}}
\def\scrN{\mathscr{N}\!\!or}
\def\scrA{\mathscr{A}}

\def\frakU{\mathfrak{U}}
\def\frakS{\mathfrak{S}}
\def\spI{\mathfrak{I}}
\def\spC{\pmb{\Game}}

\def\dd{\mathrm{d}}
\def\rad{\mathrm{rad}}
\def\iso{\omega}
\def\ImspC{{{}^{\Im}\mkern-1mu\pmb{\Game}}}
\def\fct{\!{\lower 0.8ex\hbox{\tikz\draw (0pt, 0pt) node{\scriptsize$\mathfrak{F}$};}}\!}

\def\isoinv{\varpi}

\def\heart{{\color{red}\pmb{\heartsuit}}}
\def\diamo{{\color{red}\pmb{\diamondsuit}}}

\newcommand{\circled}{\ \lower-0.2ex\hbox{\tikz\draw (0pt, 0pt) circle (.1em);} \ }

\newcommand{\Norm}[2]{\Vert#1\Vert_{#2}}

\newcommand{\w}[1]{\widehat{#1}}

\newcommand{\iposet}{i.poset\ }

\newcommand{\defines}{\it}

\begin{document}

\def\headertitle{Normed modules, integral sequences, and integrals with variable upper limits}

\title[\headertitle]{Normed modules, integral sequences, and integrals with variable upper limits}

\def\fstpage{1}
\def\page{$\begin{matrix} {\color{white}0} \\ \thepage \end{matrix}$}
\pagestyle{fancy}
\fancyhead[LO]{ }
\fancyhead[RO]{ }
\fancyhead[CO]{\ifthenelse{\value{page}=\fstpage}{\ }{\scriptsize{\headertitle}}}
\fancyhead[LE]{ }
\fancyhead[RE]{ }
\fancyhead[CE]{\scriptsize{Miantao LIU, Shengda LIU, Yu-Zhe LIU}}
\fancyfoot[L]{ }
\fancyfoot[C]{\page}
\fancyfoot[R]{ }
\renewcommand{\headrulewidth}{0.5pt}


\thanks{$^{\ast}$Corresponding author.}
\thanks{{\bf MSC2020:}
16G10; 
46B99; 
46M40} 

\thanks{{\bf Keywords:} Finite-dimensional algebras; normed modules; categorification; integrations with variable upper limits}

\author{Miantao Liu}
\address{School of Mathematics, Nanjing University, Nanjing, Jiangsu, 210093, China}
\email{dg1921007@smail.nju.edu.cn / miantao.liu@imj-prg.fr (M. Liu);
{\it ORCID}: \href{https://orcid.org/0009-0005-8118-1732}{0009-0005-8118-1732}}

\author{Shengda Liu}
\address{The State Key Laboratory of Multimodal Artificial Intelligence Systems, Institute of Automation,
Chinese Academy of Sciences, Beijing 100190, P. R. China.}
\email{thinksheng@foxmail.com (S. Liu);
{\it ORCID}: \href{https://orcid.org/0000-0003-1382-4212}{0000-0003-1382-4212}}

\author{Yu-Zhe Liu$^*$}
\address{School of Mathematics and Statistics, Guizhou University, Guiyang 550025, Guizhou, P. R. China}
\email{liuyz@gzu.edu.cn / yzliu3@163.com (Y.-Z. Liu);
{\it ORCID}: \href{https://orcid.org/0009-0005-1110-386X}{0009-0005-1110-386X}}

\maketitle
\begin{abstract}
This paper provides a new categorification of the Lebesgue integral with variable upper limits by using normed modules over finite-dimensional $\mathds{k}$-algebras $\itLamb$ and the category $\scrA^p_{\itLamb}$ associated with $\itLamb$. The integration process is redefined through the introduction of an integral partially ordered set and an abstract integral with variable upper limits. Finally, we present two important applications: (1) the categorification of basic elementary functions, including (anti-)trigonometric and logarithmic functions, and (2) a new approach for characterizing the global dimensions of gentle algebras.
\end{abstract}

\setcounter{tocdepth}{1}
\tableofcontents

\def\gldim{\mathrm{gl.dim}}
\def\perm{\mathrm{perm}}
\def\forb{\mathrm{forb}}
\def\rmv{\mathsf{v}}
\def\rmp{\mathsf{w}}

\section{\sectcolor Introduction}

In recent years, category theory has been widely applied in many fields,
and some researchers have focused on the categorifications of differentials \cite{APL2021diff, BCS2015diff, IL2023ana, Lemay2019, Lemay2023} and integrations \cite{CL2018Cart-int, CL2018int, Lei2023FA}.
In \cite{Lei2023FA}, \checkLiu{Tom Leinster provided a new method to describe the Lebesgue integration in terms of category theory, and this categorification differs from that in \cite{CL2018Cart-int} and \cite{CL2018int}.}
He defined a category $\scrA^p$ ($p\ge 1$) over a field $\kk$,
where the objects are triples $(V,v,\delta)$, consisting of a Banach space $V$,
a distinguished element $v \in V$, and a special $\kk$-linear map $\delta$ from \checks{$V\oplus V$} to $V$.
The morphism $f$ between $(V, v, \delta)$ and $(V', v', \delta')$ is
a $\kk$-linear map $f$ from $V$ to $V'$ satisfying $f(v) = v'$ and $\delta' \circ (\checks{f\oplus f}) = f \circ \delta$. \checkLiu{An object $I$ of a category $\mathcal{C}$ is initial if for each object $Z$ of $\mathcal{C}$, there is
exactly one map form $I$ to $Z$ in $\mathcal{C}$ and any two initial objects of a category are isomorphic.}
\checkLiu{In the category $\scrA^1$, Lebesgue integration corresponds to the morphism $\w{T}$ from the initial} object $(L^1([c,d]), \id_{[c,d]}: [c,d] \to 1, \gamma_{\frac{1}{2}})$ to the object $(\kk, 1, m)$. See \cite[Proposition 2.2]{Lei2023FA} for details. \checkLiu{ For Lebesgue integrals with variable upper limit $t$,} let $C_*([c,d])$ be the Banach space of continuous functions $F$ from $[0,1]$ to $\RR$ satisfying $F(c)=0$,
$\mathrm{id}_{[c,d]}$ be the identity function on the
$[c,d]$ and $\eta: C_*([c,d]) \oplus C_*([c,d]) \to C_*([c,d])$
be the $\RR$-linear map defined as
\[\eta(F, G)(x)=
\begin{cases}
\displaystyle\frac{1}{2}F(2x-c), & \text{ if } \displaystyle c\le x\le \frac{c+d}{2}; \\
\displaystyle\frac{1}{2}(F(d)+G(2x-d)), & \text{ if } \displaystyle \frac{c+d}{2} \le x \le d.
\end{cases}\]
The triple $(C_*([c,d]), \mathrm{id}_{[c,d]}, \eta)$ is an object in $\scrA^1$. Let $\int_c^t \cdot \dd\mu$ be the morphism from the initial object to $(C_*([c,d]), \mathrm{id}_{[c,d]}, \eta)$ in $\scrA^1$. This $\RR$-linear map sends each function $f \in L^1([c,d])$ to the Lebesgue integral $(\text{L})\int_{c}^{t} f \dd\mu$ with variable upper limit $t$, see \cite[Proposition 2.4]{Lei2023FA}.

In order to generalize the $\RR$-linear map $\int_c^t \cdot \dd\mu$, we introduced the integral partial order sets and used them to provide a categorification of integrals with variable upper limits and the addition of integrals with variable upper limits.
We represented the Lebesgue integral $(\text{L})\int_{c}^{t} f \dd\mu$ with variable upper limit $t$ as the pair $(t, (\text{L})\int_{c}^{t} f \dd\mu)$  in $\kk\times\kk$,
and treated the pair $([c,t], (\text{L})\int_{c}^{t} f \dd\mu)$ as an element in $\Sigma([c,d])\times\kk$,
where $\Sigma([c,d])$ is the $\sigma$-algebra generated by all subintervals of $[c,d]$).
In Section \ref{sect:iposet}, we introduce \checkLiu{the integral partially ordered sets}
$\w{T}_{\widetilde{\spI}}(f)$ and $\w{T}_{\spI}(f)$ (see Subsection \ref{subsect:iposet}).
Let $\itLamb$ be a finite-dimensional algebra defined over a field $\kk$,
and $\II = [c,d]_{\kk}$ be a fully ordered subset of $\kk$ \checkLiu{whose minimal element is $c$ and whose maximal element is $d$.} Let $\bfS_{\tau}$ be the set of all equivalence classes of elementary simple functions defined on the set
\[ \II_{\itLamb} := \overbrace{[c,d]_{\kk} \times \cdots \times [c,d]_{\kk}}^{\dim_{\kk}\itLamb}, \] and $\II_{\itLamb}$ can be seen as a subset of the finite-dimensional algebra $\itLamb$. Let $\w{\bfS_{\tau}(\II_{\itLamb})}$ be the \checkLiu{completion of the set $\bfS_{\tau}(\II_{\itLamb})$ with respect to the norm topology, see Remark \ref{topology} and Example \ref{completion}~(4).}
Then, for $\alpha \in [c,d]_{\kk}$ and $f\in\w{\bfS_{\tau}(\II_{\itLamb})}$, \checkLiu{we can define $\frakS_{\alpha,f}$ and $\frakS$ as follows:}
\[
\frakS_{\alpha,f} :=
\bigg\{ \bigg(t, (\scrA^p_{\itLamb})\int_{\II_{\itLamb}} f\dd\mu \bigg) \in \kk\times\kk
\ \bigg|\ \alpha\le t\le d \bigg\} \text{\ (see \ref{subsubsect:3.3.1})},
\]
\[
\frakS:=\bigcup_{\alpha \in [c,d]_{\kk} \atop f\in\w{\bfS_{\tau}(\II_{\itLamb})}}
\frakS_{\alpha,f},
\]
In Subsection \ref{subsect:spC}, \checkLiu{ we obtain following maps
\[\spC: \frakS \to \Sigma(\II_{\itLamb})\times\kk, \]
\[\spC^{\natural}: \Sigma(\II_{\itLamb})\times\kk \to \{\II_{\itLamb}\}\times \kk \cong \kk,\]
and
\[+ :  \w{T}_{\widetilde{\spI}}( \w{\bfS_{\tau}(\II_{\itLamb})} )
    \times \w{T}_{\widetilde{\spI}}( \w{\bfS_{\tau}(\II_{\itLamb})} )
    \to  \Sigma(\II_{\itLamb})\times\kk,\]
where $\w{T}_{\widetilde{\spI}}( \w{\bfS_{\tau}(\II_{\itLamb})} ):=
 \bigcup_{
\spI\subseteq\widetilde{\spI},
f\in\w{\bfS_{\tau}(\II_{\itLamb})}
    }
    \w{T}_{\spI}(f)
    $}.
We will prove that the image $\im(+)$ is a left $\itLamb$-module, and the image $\im(\spC)$ of $\spC$ is a left $\itLamb$-submodule of $\im(+)$, see Propositions \ref{lemm:im(+) mod pre} and \ref{prop:spC mod}, respectively.
Furthermore, the above two propositions show the main result of our paper.

\begin{theorem}[{Theorem \ref{thm:spC Lambda-homo}}] \label{mian result:1}
The restriction
\[\spC^{\natural}|_{\im(+)}: \im(+) \to \{\II_{\itLamb}\} \times \kk\]
of $\spC^{\natural}$ is a $\itLamb$-epimorphism.
\end{theorem}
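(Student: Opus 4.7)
The plan is to obtain the theorem by combining the two preceding propositions with a direct surjectivity argument. By Proposition~\ref{lemm:im(+) mod pre}, $\im(+)$ already carries a left $\itLamb$-module structure, and by Proposition~\ref{prop:spC mod}, $\im(\spC)$ sits inside $\im(+)$ as a $\itLamb$-submodule. The first task is to unpack how the left $\itLamb$-action is transported along the isomorphism $\{\II_{\itLamb}\}\times\kk\cong\kk$, so that both source and target of $\spC^{\natural}|_{\im(+)}$ become comparable as $\itLamb$-modules; the restriction $\spC^{\natural}|_{\im(+)}$ will then have a chance to be a module homomorphism.

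I would then verify that $\spC^{\natural}|_{\im(+)}$ respects these structures. Additivity is forced by construction: $\spC^{\natural}$ retains only the $\kk$-coordinate, and this projection commutes with the additions inherited from $\Sigma(\II_{\itLamb})\times\kk$ on both $\im(+)$ and $\{\II_{\itLamb}\}\times\kk$. For the scalar part, I would take a typical element of $\im(+)$ of the form $\spC(\alpha,f)$ introduced in Subsection~\ref{subsect:spC}, act by $\lambda\in\itLamb$ as prescribed by Proposition~\ref{lemm:im(+) mod pre}, and track the second coordinate: the key point is that the $\itLamb$-action on the first coordinate alters only the $\Sigma(\II_{\itLamb})$-component without interfering with the integral value at the full interval $\II_{\itLamb}$, so the induced action on the second coordinate matches the $\itLamb$-action on $\kk\cong\{\II_{\itLamb}\}\times\kk$. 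For surjectivity, I plan to exhibit, for each $r\in\kk$, an explicit preimage: since $\w{\bfS_{\tau}(\II_{\itLamb})}$ contains scalable constant-valued representatives and the abstract integral with variable upper limit, evaluated at $t=d$, recovers ordinary integration over $\II_{\itLamb}$, one can choose a constant $f$ whose image under $\spC$ followed by $\spC^{\natural}$ equals $(\II_{\itLamb},r)$; taking $\alpha=c$ and letting the upper limit reach $d$ in the definition of $\frakS_{\alpha,f}$ places this element already inside $\im(+)$.

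The step I expect to be the main obstacle is the $\itLamb$-linearity check rather than additivity or surjectivity. The $\itLamb$-action on $\im(+)$ is built componentwise on $\Sigma(\II_{\itLamb})\times\kk$, and one must show that the forgetful projection onto the second factor is equivariant --- a statement that is intuitively plausible but requires unwinding several layers of definition simultaneously: the actions produced by Proposition~\ref{lemm:im(+) mod pre}, the precise description of $\spC$ and $\spC^{\natural}$, and the $\itLamb$-module identification $\{\II_{\itLamb}\}\times\kk\cong\kk$. Once equivariance is established, the constant-function construction above upgrades $\spC^{\natural}|_{\im(+)}$ to the desired $\itLamb$-epimorphism.
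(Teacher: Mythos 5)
Your overall route coincides with the paper's: establish the module structures via Lemma \ref{lemm:im(+) mod pre} / Proposition \ref{prop:im(+) mod} and Proposition \ref{prop:spC mod}, check that the projection onto the $\kk$-coordinate is additive and $\itLamb$-equivariant, and then get surjectivity. Your surjectivity argument (an explicit constant-function preimage, as in the proof of Lemma \ref{lemm:spC surj}) is slightly more concrete than the paper's, which simply observes that $\dim_{\kk}(\{\II_{\itLamb}\}\times\kk)=1$ so any nonzero $\itLamb$-homomorphism onto it is an epimorphism; either works.

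However, the mechanism you give for the equivariance step --- the one you single out as the main obstacle --- is stated backwards, and as written it would actually contradict equivariance. The action from Proposition \ref{prop:im(+) mod} is $a\star(S,r):=(S,\tau(a)r)$: it leaves the $\Sigma(\II_{\itLamb})$-component $S$ \emph{unchanged} and scales the $\kk$-component by $\tau(a)$. You assert the opposite, that the action ``alters only the $\Sigma(\II_{\itLamb})$-component without interfering with the integral value.'' If that were the case, one would get $\spC^{\natural}(a\star(S,r))=(\II_{\itLamb},r)\ne(\II_{\itLamb},\tau(a)r)=a\star\spC^{\natural}(S,r)$ for general $a$, and the map would fail to be $\itLamb$-linear. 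With the correct definition the step you fear is a one-line computation, exactly as in the paper:
\begin{align*}
\spC^{\natural}|_{\im(+)}(a\star(S,r))
=\spC^{\natural}|_{\im(+)}(S,\tau(a)r)
=(\II_{\itLamb},\tau(a)r)
=a\star(\II_{\itLamb},r)
=a\star\spC^{\natural}|_{\im(+)}(S,r),
\end{align*}
and additivity follows the same way since $(S_1,r_1)+(S_2,r_2)$ has second coordinate $r_1+r_2$. So the proof goes through, but only after correcting your description of which coordinate the $\itLamb$-action moves.
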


The above theorem provides a description of the addition of two integrals with variable upper limits using the morphism in $\scrA^p_{\itLamb}$.

In Subsection \ref{subsect:spC-special}, we consider some results given in Subsection \ref{subsect:spC} within the framework of the category $\scrA^1$ that satisfies the L-condition (i.e., conditions (L1)--(L6) given in Theorem \ref{thm:LLHZpre-2} (2), cf. \cite[Proposition 2.2]{Lei2023FA}).
We present the following proposition, which utilizes $\scrA^1$ to implement the calculations of Lebesgue integrals with variable upper limits.

\checkLiu{We use the symbol $\biguplus$ to represent a disjoint union.}
\begin{theorem}[{Theorem \ref{thm:proper}}] \label{mian result:2}\rm
Let $\spC_{\alpha}$ be the map
\[  \spC_{\alpha}:
\frakS_{\alpha} :=  \checkLiu{\biguplus_{f\in\Widehat{\bfS_{\mathrm{id}_{\RR}}([c,d])}}} \frakS_{\alpha,f}
\ -\!\!\!\to\  \w{T}_{\spI} := \checkLiu{\biguplus_{f\in\Widehat{\bfS_{\mathrm{id}_{\RR}}([c,d])}}} \w{T}_{\spI}(f)
\]
that sends each element $(t, (\mathrm{L})\int_{\alpha}^{t} f\dd\mu)$ in $\frakS_{\alpha,f}$
to the element $([\alpha,t], \w{T}_{\alpha}^{t}(f))$ in $\spC_{\alpha,f}$
.
If $\scrA^p$ satisfies \checkLiu{the} L-condition, then the following properties hold:
\begin{itemize}
\item[\rm(1)] For any $(\beta, (\mathrm{L})\int_{\alpha}^{\beta} f\dd\mu) \in \frakS_{\alpha, f}$ and any real number $\gamma$ satisfying $\beta\le\gamma\le d$, we have:
\begin{align*}
& \spC^{\natural}\circled\spC
\bigg(\beta, (\mathrm{L})\int_{\alpha}^{\beta} f\dd\mu \bigg)
+ \spC^{\natural}\circled\spC
\bigg(\gamma, (\mathrm{L})\int_{\beta}^{\gamma} f\dd\mu \bigg) \\
=\ & \spC^{\natural}\circled\spC
\bigg(\gamma, (\mathrm{L})\int_{\alpha}^{\gamma} f\dd\mu \bigg).
\end{align*}

\item[\rm(2)]The map $\spC^{\natural}_{\alpha} := \spC^{\natural}|_{\im(\spC_{\alpha})}$
is a $\kk$-linear map. For any $k_1,k_2\in\kk$,
$\tilde{f}:=(\beta, (\mathrm{L})\int_{\alpha}^{\beta} f\dd\mu ) \in \frakS_{\alpha, f}$
and $\tilde{g}:=(\beta, (\mathrm{L})\int_{\alpha}^{\beta} g\dd\mu) \in \frakS_{\alpha, g}$,
we have the following equation:
\[ \spC^{\natural}_{\alpha}
(k_1 \spC_{\alpha}(\tilde{f})+k_2\spC_{\alpha}(\tilde{g}))
=  k_1\cdot \spC^{\natural}_{\alpha} \circled \spC_{\alpha}(\tilde{f})
+k_2\cdot \spC^{\natural}_{\alpha} \circled \spC_{\alpha}(\tilde{g}). \]
\end{itemize}
\end{theorem}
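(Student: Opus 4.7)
The plan is to reduce both assertions to classical properties of the Lebesgue integral, once the abstract constructions $\spC$ and $\spC^{\natural}$ have been identified with honest integration under the L-condition.

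First I would unpack what $\spC^{\natural}\circled\spC$ does on the inputs appearing in the statement. By the definition of $\spC_\alpha$, the pair $(\beta,(\mathrm{L})\int_\alpha^\beta f\,\dd\mu)$ is sent to $([\alpha,\beta],\w{T}_\alpha^\beta(f))\in\Sigma(\II_\itLamb)\times\kk$, and then $\spC^{\natural}$ projects onto $\{\II_\itLamb\}\times\kk\cong\kk$. Because $\scrA^1$ satisfies the conditions (L1)--(L6), the morphism $\int_c^t\cdot\,\dd\mu$ from the initial object of $\scrA^1$ agrees with the classical Lebesgue integral on $L^1([c,d])$, so $\w{T}_\alpha^\beta(f)$ equals the real number $(\mathrm{L})\int_\alpha^\beta f\,\dd\mu$. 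Thus on the relevant inputs $\spC^{\natural}\circled\spC(\beta,(\mathrm{L})\int_\alpha^\beta f\,\dd\mu)$ is nothing but $(\mathrm{L})\int_\alpha^\beta f\,\dd\mu$ regarded as an element of $\{\II_\itLamb\}\times\kk$.

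For part (1), the desired identity reduces to the classical additivity $\int_\alpha^\beta f\,\dd\mu+\int_\beta^\gamma f\,\dd\mu=\int_\alpha^\gamma f\,\dd\mu$ whenever $\alpha\le\beta\le\gamma$. What must be checked is that the ``$+$'' appearing on the left-hand side, interpreted inside $\im(+)\subseteq\{\II_\itLamb\}\times\kk$ via the map $+$ of Subsection \ref{subsect:spC}, actually matches ordinary addition of scalars once we apply $\spC^{\natural}$. This is where Theorem \ref{mian result:1} enters: since $\spC^{\natural}|_{\im(+)}$ is a $\itLamb$-epimorphism, in particular an additive group homomorphism, addition in $\im(+)$ is transported to addition in $\kk$. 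Combined with the interval bookkeeping $[\alpha,\beta]\cup[\beta,\gamma]=[\alpha,\gamma]$ (which is precisely the data that $\spC^{\natural}$ forgets), we obtain the stated equality.

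For part (2), the $\kk$-linearity of $\spC^{\natural}_\alpha$ follows from two ingredients. First, the $\kk$-linearity of Lebesgue integration on $L^1([c,d])$, which is inherited from the fact that $\int_c^t\cdot\,\dd\mu$ is a morphism from the initial object in $\scrA^1$ and therefore a $\kk$-linear map; second, the $\itLamb$-module compatibility coming from Theorem \ref{mian result:1}, which guarantees $\spC^{\natural}|_{\im(\spC_\alpha)}$ respects scalar multiplication by $k_1,k_2\in\kk\subseteq\itLamb$. Applying $\spC^{\natural}_\alpha$ to $k_1\spC_\alpha(\tilde f)+k_2\spC_\alpha(\tilde g)$ and using the identifications from the first paragraph then yields the formula at once. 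The main obstacle I foresee is bookkeeping rather than conceptual: one must track precisely how the abstract addition $+$ on $\w{T}_{\widetilde{\spI}}(\w{\bfS_\tau(\II_\itLamb)})\times\w{T}_{\widetilde{\spI}}(\w{\bfS_\tau(\II_\itLamb)})$, the disjoint union $\biguplus$ used to assemble $\frakS_\alpha$, and the projection $\spC^{\natural}$ all interact, and verify that under the L-condition each of these collapses to the familiar operations on Lebesgue integrals. Once this identification is pinned down, both (1) and (2) become straightforward restatements of the additivity and linearity of the ordinary Lebesgue integral on $[c,d]\subseteq\RR$.
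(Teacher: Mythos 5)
Your proposal is correct and follows essentially the same route as the paper: part (1) rests on the additivity of the abstract sum over adjacent intervals (the paper's Lemma \ref{lemm:spcase of addi}) together with the compatibility of $\spC^{\natural}$ with $+$ (which the paper packages as Lemma \ref{lemm:comm diagram} and which you invoke directly via Theorem \ref{thm:spC Lambda-homo}), and part (2) is in both cases the observation that $\spC^{\natural}_{\alpha}$ is the restriction of the $\kk$-linear map $\spC^{\natural}|_{\im(+)}$ to the subspace $\im(\spC_{\alpha})$. The only cosmetic difference is that you appeal to the classical additivity of the Lebesgue integral under the L-condition where the paper derives the same identity formally from the definition of $+$ (the case $u\preceq v=s\preceq t$ with $f=g$).
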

Based on the Theorem \ref{thm:proper}, \checkLiu{we obtain as Corollary \ref{thm:main} the categorification in terms} of our category of the Lebesgue integral with variable upper limits in the sense of Leinster and Meckes. The above results may be applied in the future to other areas of mathematics, such as differential and integral equations. However, in this paper, only two important applications are presented. One is the categorification of elementary functions, including (anti)trigonometric and logarithmic / exponential functions, and the other is a new approach to describe the global dimensions of the gentle algebras, \checkLiu{ we consider two functions. One is defined on gentle algebra $A=\kk\Q/\I$ and the other one is defined on its Koszul dual $A^{!}$ of $A$:}

We consider the \checks{homomorphism} defined on \checks{a} gentle algebras $A=\kk\Q/\I$ and the \checks{homomorphism} defined on the Koszul dual $A^{!}$ of $A$:
\begin{itemize}
\item The \checks{$\Lambda\mbox{-}$homomorphism} $\rmv_{\wp}: A\to \kk$ (the definition of $\rmv_{\wp}$ will be provided in \ref{subsubsect:rmv res} of the Subsection \ref{subsect:rmv}),
where $\wp$ denote some paths on the bound quiver of $A$.

\item The \checks{$\kk\mbox{-}$projection} $\rmp_{\wp}: A^!\to \kk$  (the definition of $\rmp_{\wp}$ will be provided in \ref{subsubsect:LSint} of the Subsection \ref{subsect:gldim Stie int}),
where $\wp$ are some special paths on the bound quiver of $A^!$. 
\end{itemize}

For a gentle algebra $\kk\Q/\I$ \cite{AS1987}, we call a path $P=a_1\cdots a_{l}$ in $\Q$ a permitted thread if
\begin{itemize}
\item[(1)] $a_{i}a_{i+1}\notin\I$, $i\le 1\le l-1$;
\item[(2)] For any arrow $\alpha\in\Q_1$ with $\s(\alpha)=\t(a_{l})$ (resp. $\t(\alpha)=\s(a_1)$),
we have $\alpha a_1\in \I$ (resp. $a_l\alpha\in \I$).
\end{itemize}
Dually, $P=a_1\cdots a_{l}$ is called a forbidden thread if
\begin{itemize}
\item[(1)] $a_{i}a_{i+1}\in\I$, $i\le 1\le l-1$;
\item[(2)] For any arrow $\alpha\in\Q_1$ with $\s(\alpha)=\t(a_{l})$ (resp. $\t(\alpha)=\s(a_1)$),
we have $\alpha a_1\notin \I$ (resp. $a_1\alpha\notin \I$).
\end{itemize}
We denote $\perm(A)$ (resp. $\forb(A)$) by the set of all permitted threads (resp. forbidden thread) on the bound quiver $(\Q,\I)$ of $A$.
Let $\widetilde{l}_F$ (resp. $\widetilde{l}_P$ ) be the number of all elements in the set $\{\s(a_i)\mid 1\le i\le l_F\}$ (resp. the set $\{\s(a_i^{!})\mid 1\le i\le l_P\}$), where $F=a_1\cdots a_{l_F}$ (resp. $P=a_1^{!}\cdots a_{l_P}^{!}$) are the paths in $\forb(A)$ (resp. $\perm(A^!)$), we obtain the following two results:
\begin{theorem}
Let $A$ be a gentle algebra whose global dimension $\gldim A$ is finite and $A^!$ be its Koszul dual, we have:
\begin{itemize}
\item[\rm(1)] {\rm(Theorem \ref{fact:1})}
\[ \frac{\gldim A}{2}
= \sup_{F\in\forb(A)}   \mathop{\int\cdots\int}\limits_{[0,1]^{\times \widetilde{l}_F}} \rmv_F \dd\mu
= \sup_{P\in\perm(A^!)} \mathop{\int\cdots\int}\limits_{[0,1]^{\times \widetilde{l}_P}} \rmv_P \dd\mu,\]
 where $l\in\NN^+$ in $\displaystyle\mathop{\int\cdots\int}\limits_{[0,1]^{\times l}}\cdot \dd\mu$ indicates that the multiple integral is defined on $\kk^l$.

\item[\rm(2)] {\rm(Theorem \ref{fact:2})}
There exists a family of Lebesgue-Stieltjes measures
\[
\checkLiu{
(\varphi_l: [1,t]\mapsto \ln t^l)_{
   \checks{l\in \{l(P)\mid P\in\perm(A^!) \}}
   }
}
~~~~ \text{\rm(see the formula (\ref{formula:fact:2}) )}
\]
defined on the $\sigma$-algebra $\Sigma([1,2])$ such that
\[ \gldim A =  \sup_{P\in\perm(A^!)} (\mathrm{L\text{-}S})\int_1^2 \rmp_P|_{\kk P}\ \dd\varphi_{\checks{l}}, \]
where $\displaystyle(\mathrm{L\text{-}S})\int \cdot \dd\varphi_\checks{l}$ denotes the Lebesgue-Stieltjes integration and $\Sigma([1,2])$ is generated by all subintervals of $[1,2]$.
\end{itemize}
\end{theorem}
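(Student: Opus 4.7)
The plan is to reduce both statements to the known combinatorial description of the global dimension of a gentle algebra in terms of the lengths of its forbidden threads (equivalently, via Koszul duality, of the permitted threads of its Koszul dual), and then to evaluate the integrals on the right-hand sides by direct computation of the step-function integrals built from $\rmv_F$, $\rmv_P$, and $\rmp_P$. Concretely, I would first invoke the known combinatorial characterization of $\gldim A$ for finite-global-dimension gentle algebras, which expresses it as a supremum of a numerical invariant of forbidden threads $F\in\forb(A)$, together with the standard Koszul-dual bijection $\forb(A)\longleftrightarrow \perm(A^!)$ that sends $F=a_1\cdots a_{l_F}$ to $P=a_1^!\cdots a_{l_F}^!$ and preserves the invariants $\widetilde{l}_F=\widetilde{l}_P$. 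This reduces both parts of the theorem to a pair of integral identities indexed by the path $F$ (resp.\ $P$).

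For part (1), I would unfold the definitions of $\rmv_F$ and $\rmv_P$ from \ref{subsubsect:rmv res}: each is a simple step function whose support is a nested product of dyadic half-intervals in $[0,1]^{\times \widetilde{l}_F}$ indexed by the successive sources $\s(a_i)$ of the thread. Applying Fubini and iterating the categorified variable-upper-limit integral supplied by Theorem \ref{mian result:2}, the multiple integral of $\rmv_F$ telescopes to a quantity of the form $\tfrac{1}{2}\,l_F$; the halving factor is exactly the dyadic splitting encoded in the morphism $\eta$ used to define $(C_*([c,d]),\mathrm{id}_{[c,d]},\eta)$ and its higher-dimensional analogues living in $\scrA^p_{\itLamb}$. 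The same calculation carried out on $A^!$ produces $\tfrac{1}{2}\,l_P$. Taking suprema over $\forb(A)$ and $\perm(A^!)$ and then applying the combinatorial formula yields the two equalities asserted in (1).

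For part (2), the key observation is the absolute continuity of the Lebesgue–Stieltjes measure $\varphi_l$, defined by $\varphi_l([1,t]) = \ln t^l = l\ln t$, with respect to Lebesgue measure on $[1,2]$; its Radon–Nikodym density is $l/t$, so
\[
(\mathrm{L\text{-}S})\int_1^2 g\,\dd\varphi_l \;=\; \int_1^2 g(t)\,\frac{l}{t}\,\dd t
\]
for every Borel function $g$ on $[1,2]$. The function $\rmp_P|_{\kk P}$ constructed in \ref{subsubsect:LSint} is normalized so that the resulting elementary Lebesgue integral evaluates to $1$, whence the Lebesgue–Stieltjes integral equals exactly $l = l_P$. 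Taking the supremum over $P\in\perm(A^!)$ and invoking the combinatorial formula a second time produces $\gldim A$.

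The main obstacle I anticipate is the bookkeeping required to match the combinatorial data of a thread (its length $l$ and its number $\widetilde{l}$ of distinct sources) with the cube decomposition of $[0,1]^{\times \widetilde{l}}$ on which $\rmv_F$ and $\rmv_P$ are defined, and to verify that this decomposition is exactly the one induced by the dyadic $\delta$-splittings built into the objects of $\scrA^p_{\itLamb}$. One must also handle boundary overlaps carefully when the supports of adjacent step functions meet at the midpoint of a subinterval. Once this compatibility is established, the two integral identities follow from Theorems \ref{mian result:1} and \ref{mian result:2} by routine evaluation, and both parts of the theorem drop out.
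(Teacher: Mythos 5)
Your overall skeleton agrees with the paper's: both parts reduce to the combinatorial formula $\gldim A=\sup_{F\in\forb(A)}\ell(F)$ from \cite[Theorem 5.10]{LGH2023} together with the length-preserving bijection $\forb(A)\to\perm(A^!)$, $a_1\cdots a_l\mapsto a_l^!\cdots a_1^!$, after which everything hinges on showing that the integral attached to a thread of length $l$ equals $l/2$ in part (1) and $l$ in part (2). For part (2) your route is genuinely different but sound: you integrate against the Radon--Nikodym density $l/t$ of $\varphi_l$ with respect to Lebesgue measure on $[1,2]$, whereas the paper pushes the integral through the substitution ${_{P}}\iso:x\mapsto\ln x^l$ and its left inverse ${_{P}}\isoinv$ to obtain $(\mathrm{R})\int_0^{\ln t^l}\mathrm{e}^{k/l}\,\dd k=l(t-1)$; both give $l$ at $t=2$, and your version is the more elementary of the two.

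The gap is in your evaluation of the multiple integral in part (1). You describe $\rmv_F$ as ``a simple step function whose support is a nested product of dyadic half-intervals'' and attribute the factor $\tfrac12$ to the dyadic splitting encoded in $\eta$ (equivalently, in the juxtaposition maps). That is not what $\rmv_F$ is, and that mechanism would not produce the stated value. By the definition in \ref{subsubsect:rmv res}, $\rmv_F$ sends $\sum_\wp k_\wp\wp+\I$ to $k_{\s(a_1)}+\cdots+k_{\s(a_l)}$; viewed as a function of the coordinates $(k_{e_v})_v$ on $\II_{\itLamb}$ it is the \emph{linear} function $\sum_v u_{e_v}k_{e_v}$ (with $u_{e_v}$ the multiplicity of $v$ among the sources), which has full support on the cube and is not piecewise constant. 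The factor $\tfrac12$ arises from the elementary identity $\int_0^1 k\,\dd k=\tfrac12$ applied coordinatewise after Fubini, giving
\[
\int_{[0,1]^{\times \widetilde{l}_F}} \rmv_F \,\dd\mu \;=\; \sum_{v} u_{e_v}\cdot\tfrac12 \;=\; \tfrac{\ell(F)}{2},
\]
with no telescoping through the $\delta$-splittings or through Theorem \ref{mian result:2}. A secondary inaccuracy: the Koszul bijection need not preserve $\widetilde{l}$ (it identifies $\{\s(a_i^!)\}$ with $\{\t(a_i)\}$, not with $\{\s(a_i)\}$), but this is harmless since the value $\ell/2$ of the integral does not depend on $\widetilde{l}$. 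With the integrand correctly identified and the computation above substituted for your dyadic argument, the rest of your outline goes through.
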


\section{\sectcolor Preliminaries} \label{sec:preliminaries}

Let $\kk$ be a field such that the following conditions hold.
\begin{itemize}
  \item[(1)] $\kk$ is equipped with a norm $|\cdot|$.
    That is the function $|\cdot|: \kk \to \RR^{\ge 0}$ such that:
    \begin{itemize}
      \item $|k|=0$ if and only if $k=0$;
      \item $|k_1k_2|=|k_1||k_2|$ holds for all $k_1,k_2\in\kk$;
      \item and the triangle inequality $|k_1+k_2|\le |k_1|+|k_2|$ holds for all $k_1,k_2\in\kk$.
    \end{itemize}
  \item[(2)] The set $\{\mathfrak{B}_r=\{a\in\kk\mid |a|<r\} \mid r\in\RR^+\}$ induces a standard topology $\frakU_{\kk}(0)$ on $\kk$ whose element is called the {\defines  neighborhood} of $0\in\kk$,
  and each element in $\frakU_{\kk}(0)$ is said to be a {\defines neighborhood} of $0$.
  \item[(3)] $\kk$ contains a \checkLiu{totally ordered subset $\II$.}
\end{itemize}

A $\kk$-linear space $\itLamb$ is called a $\kk$-algebra if it is a ring and $k(ab) = (ka)b = a(kb)$ for all $k\in\kk$ and $a, b \in \itLamb$.
Recall that a left $\itLamb$-module is a pair $(V,h)$, where $V$ is a $\kk$-linear space,
and $h$ is a homomorphism
\[h:\itLamb \to \End_{\kk}V, \ a\mapsto (h_a: V \to V)\]
of $\kk$-algebras, $\End_{\kk}V$ is the set of all $\kk$-linear maps of the form $V \to V$.
Equivalently, $h$ induces a left action $\itLamb\times V \to V$, $(a,v)\mapsto av:=h_a(v)$, which satisfies the following conditions:
\begin{itemize}
\item[(1)] $a(v+v')=av+av'$ for any $v,v'\in V$ and $a\in \itLamb$;
\item[(2)] $(a+a')v=av+a'v$ for any $v\in V$ and $a,a'\in \itLamb$;
\item[(3)] $a'(av)=(a'a)v$ for any $v\in V$ and $a,a'\in \itLamb$;
\item[(4)] $1v=v$ for any $v\in V$;
\item[(5)] $(ka)v=k(av)=a(kv)$ for any $v\in V$, $a\in \itLamb$ and $k\in\kk$.
\end{itemize}

\begin{example} \rm
Let $\pmb{M}$ be an $n\times n$ matrix whose elements are real numbers.
Let $\itLamb=\RR[x]$ and $V=\RR^{\oplus n}$, then
\[h: \RR[x] \to \End_{\RR}(\RR^{\oplus n}) \cong
\left(\begin{matrix}
\RR & \RR & \cdots & \RR \\
\RR & \RR & \cdots & \RR \\
\vdots & \vdots & & \vdots \\
\RR & \RR & \cdots & \RR
\end{matrix}\right),\
f(x) \mapsto f(\pmb{M})\]
is a homomorphism which induces the left $\RR[x]$-action
\[ f(x) \pmb{v} := f(\pmb{M}) \pmb{v} \]
such that the conditions (1) -- (5) above hold.
\end{example}

Throughout this paper, we assume that $\dim_{\kk}\itLamb$ is finite $(=n)$ and $B_{\itLamb} = \{b_i\mid 1\le i\le n\}$ is a basis of $\itLamb$ as a $\kk$-linear space. For simplification, $(V,h)$ will be written as $V$, and we assume that $V$ can be infinite-dimensional. We recall some concepts originally derived from Leinster's work in \cite{Lei2023FA}, with further details provided in \cite{LLHZpre}.

\subsection{\sectcolor Normed modules and Banach modules}
We fix a homomorphism $\tau:\itLamb \to \kk$ between two finite-dimensional $\kk$-algebras, where $\kk$ is a field equipped with a norm $|\cdot|: \kk\to\RR^{\ge0}$.

Let $V$ be a vector space over $\kk$, then a norm on $V$ is a map $\Norm{\cdot}{}: V \to \RR^{\ge0}$ satisfying the following four axioms:
\begin{enumerate}
    \item Non-negativity: For every $x \in V$, $\Norm{x}{}\geq 0$.
    \item Positive definiteness: For every $x \in V$, $\Norm{x}{}= 0$ if and only if $x=0.$
    \item Absolute homogeneity: For every $\lambda\in \kk$ and $ x \in V$,
    $\Norm{\lambda x}{} = |\lambda| \Norm{x}{}.$
    \item  Triangle inequality: For every $x\in V$, $y\in V$,
    $\Norm{x+y}{} \leq \Norm{x}{} + \Norm{y}{}.$
\end{enumerate}
The pair $(V, \Norm{\cdot}{})$ is called a normed vector space on $\kk$.

\begin{remark}\label{topology}
For the normed vector space $(V, \Norm{\cdot}{})$, the norm \( \|\cdot\| \) naturally induces a metric and, consequently, induces a topology on \( V \).
This metric between two vectors $x$ and $y$ is given by
\[
d(x, y) = \|x - y\|, \quad \forall x, y \in V.
\]
Using this metric, one can define a topology on \( V \), known as the norm topology or metric topology. The topological basis of this topology consists of all open balls:
\[
B_r(x) = \{ y \in V \mid \|y - x\| < r \}, \quad \forall x \in V, \ r > 0.
\]
This topology determines the open sets in \( V \) and makes \( \|\cdot\| \) continuous and which is compatible with the linear structure of \( V \) in the following ways:
\begin{enumerate}
    \item The addition \( + : V \times V \to V \) is jointly continuous with respect to this topology. This follows directly from the triangle inequality.
    \item The scalar multiplication \( \cdot : \kk \times V \to V \), where \( \kk \) is the underlying scalar field of \( V \), is jointly continuous. This follows from the triangle inequality and homogeneity of the norm.
\end{enumerate}
\end{remark}

\begin{definition} \rm
We define a {\defines normed $\itLamb$-module} as a triple $(V, h, \Norm{\cdot}{})$ consisting of a left $\itLamb$-module $(V,h)$ and a norm $\Norm{\cdot}{}: V \to \RR^{\ge 0}$ on $V$, and for any $a\in \itLamb$ and $m\in V$,
\[\Norm{am}{\tau} = |\tau(a)|\Norm{m}{}.\]
Notice that $\Norm{\cdot}{}$ is written as $\Norm{\cdot}{\tau}$ in our paper since the definition of $\Norm{\cdot}{}$ depends on $\tau$.
\end{definition}

Each normed $\itLamb$-module $(V, h, \Norm{\cdot}{\tau})$ has the canonical induced metric defined by $\Norm{m-n}{\tau}$ for all vectors $m, n$ in $V$.

\begin{definition} \rm
\begin{enumerate}
\item A sequence $m_{1}, m_{2}, \dots, m_{l}$ in $V$ is called the {\defines$\Norm{\cdot}{\tau}$-Cauchy sequence} if, for every $\varepsilon\in\RR^{\ge0}$, there exists an index $N$ such that $\Norm{m_{i}-m_{j}}{\tau}<\varepsilon$ for all $i,j> N$.
\item  A normed $\itLamb$-module $(V, h, \Norm{\cdot}{\tau})$ is called {\defines Banach $\itLamb$-module} if, for every $\Norm{\cdot}{\tau}$-Cauchy sequence $m_{1}, m_{2}, \dots$ in $V$, there exists an object $m$ in $V$, such that
$$\displaystyle \lim_{l \to \infty} \Norm{m_l-m}{\tau} = 0.$$
\end{enumerate}
\end{definition}
For simplicity, each normed or Banach $\itLamb$-module $(V, h, \Norm{\cdot}{\tau})$ \checkLiu{is simply denoted by} $V$.
We denote by $[c,d]_{\kk}$ the interval \checkLiu{in $\kk$} with the minimal element $c$ and the maximal element $d$, \checkLiu{and let $\II$ be the interval $[c,d]_{\kk}$.}
We denote by $\II_{\itLamb}$ a subset of $\itLamb$, which has a one-to-one correspondence with the Cartesian product $\prod_{i=1}^n \II b_i$. We assume that $[c,d]_{\kk}$ contains an element $\xi$ such that $c\prec \xi \prec d$ and there exist two order-preserving bijections $\kappa_{c}: [c,d]_{\kk}\to [c,\xi]_{\kk}$ and $\kappa_{d}: [c,d]_{\kk}\to [\xi,d]_{\kk}$.
For any subset $S$ of $\itLamb$, the function $\id_S$ is defined on $\itLamb$ as follows:\[\id_S: \itLamb \to \kk,
x \mapsto \begin{cases}
1, & \text{ if } x\in S; \\
0, & \text{otherwise}.
\end{cases}\]
We will provide an algorithm required for our categorification.
\subsection*{The algorithm:}
\begin{enumerate}
    \item First step: we fix an element  $\xi_{11}\in\II=[c,d]_{\kk}$, then the element $\xi_{11}$ divides $\II$ (we also use $\II^{(01)}$ to represent $\II$) to two subsets defied as follows:
 $$\II^{(11)}:=[c,\xi_{11}]_{\kk},~~~\II^{(12)}:=[\xi_{11},d]_{\kk}.$$  Then $\kappa_{c}(d)=\xi_{11}=\kappa_{d}(c)$ and this step is equal to
$$\begin{tikzpicture}
    \draw[thick] (-0.5,0) -- (12.5,0);

    \fill (0, 0) circle (2pt);
    \node[above] at (0, 0) {$c$};
\fill[red] (4, 0) circle (2pt);
\node[red][above] at (4, 0) { $\xi_{11}$};
\fill (12, 0) circle (2pt);
    \node[above] at (12, 0) { $d$};
\end{tikzpicture}$$

    \item Second step: let $\xi_{22}=\xi_{11}, c=\xi_{20}, d= \xi_{24}$ and $\xi_{21}$ and $\xi_{23}$ are defined as two elements in $\II$ such that
 \begin{itemize}
  \item $c\prec\xi_{21}=\kappa_c\kappa_c(d) = \kappa_c\kappa_d(c) = \kappa_c(\xi_{11})\prec\xi_{22}$;
  \item $\xi_{22}\prec\xi_{23}=\kappa_d\kappa_d(c)=\kappa_d\kappa_c(d) = \kappa_d({\xi_{11}})\prec d$.
\end{itemize}
Thus, $\II$ is divided into four subsets of the form $\II^{(2t)} = [\xi_{2t}, \xi_{2\ t+1}]_{\kk}$ ($0\le t\le 3$),
with $c=\xi_{20} \prec \xi_{21} \prec \xi_{22} \prec \xi_{23} \prec \xi_{24}=d$. This step is equal to
$$\begin{tikzpicture}
    \draw[thick] (-0.5,0) -- (12.5,0);

    \fill (0, 0) circle (2pt);
    \node[above] at (0, 0) {$c=\xi_{20}$};

    \fill (12, 0) circle (2pt);
    \node[above] at (12, 0) {$\xi_{24} = d$};

    \fill[red] (4, 0) circle (2pt);
    \node[red][above] at (4, 0) {$\xi_{11}=\xi_{22}$};

    \fill[blue] (1.33, 0) circle (2pt);
    \node[blue][above] at (1.33, 0) {$\xi_{21}$};

    \fill[blue] (6.66, 0) circle (2pt);
    \node[blue][above] at (6.66, 0) {$\xi_{23}$};

\end{tikzpicture}$$

\item Following step: repeating the above step $t$ times, we obtain a segmentation
$c=\xi_{t0} \prec \xi_{t1} \prec \xi_{t2} \prec \cdots \prec \xi_{t2^t}=d$
of $[c,d]_{\kk}$, where each $[\xi_{ts}, \xi_{t\ s+1}]_{\kk}$ is written as $\II^{(t\ s+1)}$. Let $\II^{(01)} = [\xi_{00}, \xi_{01}]_{\kk} = [c,d]_{\kk}$ in our paper.

\item Finally, we obtain $2^t$ order-preserving bijections $\kappa_{ts}: \II^{(01)} = [c,d]_{\kk} \to \II^{(t\ s+1)}$
which are compositions of several order-preserving bijections of $\{\kappa_c, \kappa_d\}$, see \Pic \ref{fig:kappa} and Example \ref{exp:kappa}.
\end{enumerate}

\begin{figure}[htbp]
\centering
\includegraphics[width=15cm]{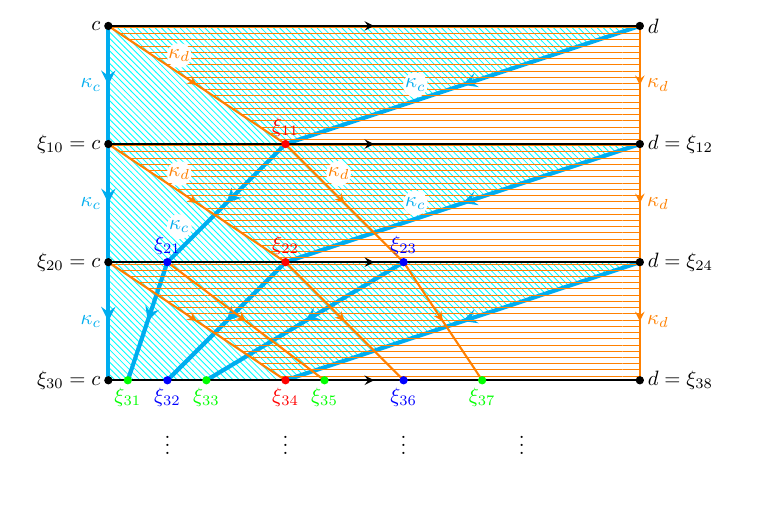}
\caption{Order-preserving bijections $\kappa_c$ and $\kappa_d$}
\label{fig:kappa}
\end{figure}

\begin{example} \rm \rm \label{exp:kappa} 
Consider $\kappa_c$ and $\kappa_d$ and the intervals given in \Pic \ref{fig:kappa} for $t=3$.
\begin{align*}
  & \II^{(31)}=[\xi_{30},\xi_{31}]_{\kk},
 && \II^{(32)}=[\xi_{31},\xi_{32}]_{\kk}, \\
  & \II^{(33)}=[\xi_{32},\xi_{33}]_{\kk},
 && \II^{(34)}=[\xi_{33},\xi_{34}]_{\kk}, \\
  & \II^{(35)}=[\xi_{34},\xi_{35}]_{\kk},
 && \II^{(36)}=[\xi_{35},\xi_{36}]_{\kk}, \\
  & \II^{(37)}=[\xi_{36},\xi_{37}]_{\kk},
 && \II^{(38)}=[\xi_{37},\xi_{38}]_{\kk},
\end{align*}
Then we have $2^t=2^3=8$ bijections as follows:
\begin{align*}
 & (1)\ \kappa_{31} = \kappa_c\kappa_c\kappa_c;
&& (2)\ \kappa_{32} = \kappa_c\kappa_c\kappa_d; \\
 & (3)\ \kappa_{33} = \kappa_c\kappa_d\kappa_c;
&& (4)\ \kappa_{34} = \kappa_c\kappa_d\kappa_d; \\
 & (5)\ \kappa_{35} = \kappa_d\kappa_c\kappa_c;
&& (6)\ \kappa_{36} = \kappa_d\kappa_c\kappa_d; \\
 & (7)\ \kappa_{37} = \kappa_d\kappa_d\kappa_c;
&& (8)\ \kappa_{38} = \kappa_d\kappa_d\kappa_d.
\end{align*}
\end{example}

For any family of subsets $(\II^{(x_iy_i)})_{1\le i\le n}$ ($1\le y_i\le 2^{x_i}$), we denote the function $\id_{(x_iy_i)_i}$ by $\id_{\II_{\itLamb}}\big|{}_{\prod_{i=1}^{n}\II^{(x_iy_i)}}$ for simplicity, where $\II^{(x_iy_i)} \cong \II^{(x_iy_i)}\times\{b_i\}\subseteq \II_{\itLamb}$ holds for all $i$.

\begin{definition} \rm
We define {\defines step function spaces} $E_u$ on $\II_{\itLamb}${\rm} as the set of all functions of the form
\[\sum_{(x_iy_i)_i} k_{(x_iy_i)_i}\id_{(x_iy_i)_i} \ (= \sum_i k_i\id_{I_i} \text{ for simplification}), \]
where $k_{(x_iy_i)_i}\in\kk$, and $(x_iy_i)_i$ corresponds to the Cartesian product $\prod_{i=1}^{n}\II^{(x_iy_i)}$.

$E_u$ is a left $\itLamb$-module defined as the pair $(E_u, h_{E_u}: \itLamb\to\End_{\kk}E_u)$,
where $h_{E_u}$ is given by \[ h_{E_u}: a \mapsto \big((h_{E_u})_a: f \mapsto \tau(a)f\big), \]
and for any $a, a'\in \itLamb$ and $f\in E_u$, we have \[(h_{E_u}(aa'))(f) = (h_{E_u})_{aa'}(f) = \tau(aa')f = \tau(a)\tau(a')f = ((h_{E_u})_a\circled(h_{E_u})_{a'})(f).\]
\end{definition}

\begin{definition} \rm
A map $f:\II_{\itLamb}\to\kk$ defined by $f(k_1, \ldots, k_n) = \sum_{i=1}^t k_i\id_{X_i}$ is called an {\defines elementary simple function} on $\II_{\itLamb}$, where $k_i\in\kk$.
\end{definition}

\checks{A $\sigma$-algebra $A$ defined on a nonempty set $X$ is a subset of the power set $P(X)$ of $X$ such that $X$ is an element of $A$, and $A$ is closed under the operations of taking complements, countable union, and countable intersections.}

\begin{example} \rm
If $X$ is any nonempty set, then the largest $\sigma$-algebra on $X$ is the power set $\mathcal{P}(X)$ consisting of all subsets of $X$. The smallest $\sigma$-algebra is $\{\emptyset, X\}$.
\end{example}

\begin{definition} \rm
If $\mathcal{F}$ is any collection of subsets of $X$, then the smallest $\sigma$-algebra containing $\mathcal{F}$ is called {\defines the $\sigma$-algebra generated by $\mathcal{F}$}. This $\sigma$-algebra is the intersection of all $\sigma$-algebras that contain $\mathcal{F}$.
\end{definition}
 Let $X_{i}$ be a subset of $\II$ in the forms $(c_{ij},d_{ij})_{\kk}$,  $(c_{ij},d_{ij}]_{\kk}$, $[c_{ij},d_{ij})_{\kk}$ and $[c_{ij},d_{ij}]_{\kk}$, where $c\preceq c_{ij} \prec d_{ij} \preceq d$. We assume $\bigcup_{i=1}^t X_i = \II_{\itLamb}$, and $X_i\cap X_j=\varnothing$ for all $1\le i\ne j\le t$. We define $\Sigma(\II)$ as the $\sigma$-algebra generated by all subsets of the following form,
\begin{center}
$\{(c',d')_{\kk}, [c',d')_{\kk}, (c',d']_{\kk}, [c',d']_{\kk} \mid c\preceq c'\preceq d'\preceq d\}$,
\end{center}
and we fix a measure $\mu_{\II}$ on $\Sigma(\II)$ such that $\mu_{\II}(\{a\})$ is zero for any $a\in\II$. We call $\Sigma(\II_{\itLamb})$ the $\sigma$-algebra generated by all subsets $\sum_{i=1}^n I_ib_i$ in $\itLamb$, where $I_{i}~(1\le i\le n)$ is one of the intervals $(c_i,d_i)_{\kk}$, $(c_i,d_i]_{\kk}$, $[c_i,d_i)_{\kk}$ and $[c_i,d_i]_{\kk}$ with $c\preceq c_i \preceq d_i \preceq d$. Then $\mu_{\II}$ has an induced measure $\mu_{\II_{\itLamb}}$ (simply denoted $\mu$) on $\Sigma(\II_{\itLamb})$, which is defined by
\begin{align} 
  \mu_{\II_{\itLamb}}\bigg(\sum_{i=1}^n I_ib_i\bigg) = \prod_{i=1}^n \mu_{\II}(I_i). \nonumber
\end{align}
We can define the equivalence class of elementary simple functions as follows:

\begin{definition} \rm
Two elementary simple functions $f$ and $g$ are said to be {\defines equivalent} if and only if the measure of $\{x \in \II_{\itLamb} \mid f(x)-g(x)\ne 0\}$ is zero.
\end{definition}

\begin{example} \rm\label{completion}
Let $\bfS_{\tau}(\II_{\itLamb})$ be the set of all equivalence classes of elementary simple functions.
\begin{enumerate}
\item Consider the set $\bfS_{\tau}(\II_{\itLamb})$ as the pair $(\bfS_{\tau}(\II_{\itLamb}), h_{\bfS_{\tau}(\II_{\itLamb})})$, it
forms a left $\itLamb$-module with the map $h_{\bfS_{\tau}(\II_{\itLamb})}$
\[ h_{\bfS_{\tau}(\II_{\itLamb})} : a\mapsto \big( (h_{\bfS_{\tau}(\II_{\itLamb})})_a: f\mapsto \tau(a)f \big).  \] for any $a\in\itLamb$ and $f\in\bfS_{\tau}(\II_{\itLamb})$.
    \item Consider $\bfS_{\tau}(\II_{\itLamb})$ as the triple $(\bfS_{\tau}(\II_{\itLamb}), h_{\bfS_{\tau}(\II_{\itLamb})}, \Norm{\cdot}{})$, it is a normed $\itLamb$-module with the norm given by
\[ \Norm{f}{} = \bigg(\sum_{i} |k_i|^p\mu(X_i)^p\bigg)^{\frac{1}{p}}, \]
where $f=\sum_{i}k_i\id_{X_i}$.
\item Since $E_u \subseteq \bfS_{\tau}(\II_{\itLamb})$, $E_u=(E_u, h_{E_u}, \Norm{\cdot}{})$ is also a normed $\itLamb$-module with the norm
\[ \Norm{f}{} = \bigg(\sum_{i} |k_i|^p\mu(I_i)^p\bigg)^{\frac{1}{p}}. \]

\item Let \( \w{\bfS_{\tau}(\II_{\itLamb})} \) be the completion of \( \bfS_{\tau}(\II_{\itLamb}) \) with respect to the topology induced by the norm, and it becomes a complete space in this topology.
Then, $\w{\bfS_{\tau}(\II_{\itLamb})}$ is a Banach $\itLamb$-module. It is straightforward to observe that
\[\kk \cong E_{0} \subseteq E_{1} \subseteq \ldots \subseteq E_{t} \subseteq \ldots
\subseteq \bfS_{\tau}(\II_{\itLamb}) \subseteq \w{\bfS_{\tau}(\II_{\itLamb})}. \]
The authors have checked that $\underrightarrow{\lim}E_u \cong \w{\bfS_{\tau}(\II_{\itLamb})}$ \cite[Lemma 5.4]{LLHZpre}, or cf. \cite[Examples 2.2.4 (h) and 2.2.6 (g)]{Bor1994}.
\end{enumerate}
\end{example}

\begin{remark}
If $\itLamb$ is either $\RR$ or $\mathbb{C}$, then $\II_{\itLamb}$ is an interval $[c,d]\subseteq\RR$. In this case, Leinster \cite{Lei2023FA} shows $$\underrightarrow{\lim}E_u \cong L^1([c,d]).$$
\end{remark}

\subsection{\sectcolor The categories $\scrN^p_{\itLamb}$ and $\scrA^p_{\itLamb}$ and their special objects} \label{subsec:cats}
In this section, we recall the  categories $\scrN^p$, the category $\scrA^p$ ($p\ge 1$), and also some results in \cite{Lei2023FA} and \cite{LLHZpre}.

Assume that $\itLamb$ is an arbitrary finite-dimensional $\kk$-algebra with a basis $B_{\itLamb}=\{b_1, \cdots, b_n\}$.
For any $2^n$ $\itLamb$-modules $X_i$, $\ldots$, $X_{2^n}$,
\[\bigoplus\limits_{i=1}^{2^n}{}_p \  X_i = X_1\oplus_p \cdots \oplus_p X_{2^n}\]
is the direct sum $\bigoplus\limits_{i=1}^{2^n}X_i$ with the norm $\Vert\cdot\Vert$ defined by
\[\Vert (x_1,\cdots, x_{2^n}) \Vert =
\bigg(
\bigg(
\frac{\mu_{\II}(\II)}{\mu(\II_\checkLiu{\itLamb})}
\bigg)^n
\sum_{i=1}^{2^n} \Vert x_i \Vert^p
\bigg)^{\frac{1}{p}}.\]

We define the integral normed module category and the integral Banach module category, both of which are fundamental to the study of normed modules over algebras.
\begin{definition} \rm
(1) The category $\scrN^p_{\itLamb}$ (denoted by $\scrN^p$ for convenience) is defined as {\defines the category of integral normed modules of $\itLamb$} as follows.
\begin{itemize}
\item The object of $\scrN^p$ is the triple $(N, v, \delta)$, where
\begin{itemize}
\item $N$ is a normed $\itLamb$-module,
\item $v$ is an element of $N$ satisfying $\Vert v\Vert \le \mu(\II_{\itLamb})$,
\item $\delta: N^{\oplus_p 2^n} \to N$ is a $\itLamb$-homomorphism which maps $(v,\cdots, v)$ to $v$. For any monotone decreasing Cauchy sequence $\{x_i\}_{i\in\NN}$ in $N^{\oplus_p 2^n}$ (where the partial order ``$\preceq$'' is defined as follows: $x',x''\in N$, $x'\preceq x''$ if and only if $\Vert x'-\underleftarrow{\lim}x_i \Vert \le \Vert x''-\underleftarrow{\lim}x_i \Vert$), the homomorphism $\delta$ commutes with the inverse limit, i.e., $\delta(\underleftarrow{\lim} x_i) = \underleftarrow{\lim} \delta(x_i)$.
\end{itemize}
\item For any two objects $(N, v, \delta)$ and $(N',v',\delta')$, the morphism $(N, v, \delta) \to (N',v',\delta')$ is a $\itLamb$-homomorphism $f: N\to N$ such that $f(v)=v'$ and $f\circled\delta = \delta'\circled f^{\oplus 2^n}$.
\end{itemize}

(2) The category $\scrA^p_{\itLamb}$ (we will write it as $\scrA^p$ for simplicity)
       is the \checkLiu{full} subcategory of those objects where $N$ is a Banach $\itLamb$-module, we denote by $\scrA^p_{\itLamb}$ {\defines the category of integral Banach modules of $\itLamb$}.
\end{definition}

Next, we provide an important example for an objects lying in $\scrN^p$ (resp., $\scrA^p$).
We recall juxtaposition map in this subsection.

\begin{definition} \rm
A {\defines juxtaposition map} $\gamma_{\xi}: \bfS_{\tau}(\II)^{\oplus_{p} 2^n} \to \bfS_{\tau}(\II)$ is a $\kk$-linear map defined as follows,
  \[\gamma_{\xi}(\pmb{f}) (k_1,\ldots,k_n)
= \sum_{(\delta_1, \ldots, \delta_n)\in \{c,d\}\times\cdots\times\{c,d\}}
\id_{\kappa_{\delta_1}(\II)\times \cdots\times
\kappa_{\delta_n}(\II)}\cdot f_{\delta_1, \ldots, \delta_n} (\kappa_{\delta_1}^{-1}(k_1),\ldots,\kappa_{\delta_n}^{-1}(k_n)),\]
where $k_1\ne\xi$, $\ldots$, $k_n\ne\xi$.
\end{definition}

\begin{example} \rm
Let $\itLamb = \kk b_1 + \kk b_2$ be a finite-dimensional algebra over a field $\kk$ whose dimension is $2$,
and let $[c,d]_{\kk}$ ($\subseteq \kk$) be an interval. Let $\II_{\itLamb}=[c,d]_{\kk}b_1 + [c,d]_{\kk}b_2$ and let $$f_{c,c}, f_{c,d}, f_{d,c}, f_{d,d}:  \II_{\itLamb} \to \kk$$ be four functions.
Then $\gamma_{\xi}: \bfS_{\tau}(\II)^{\oplus_{p} 4} \to \bfS_{\tau}(\II)$ sends the quadruple
\[ \pmb{f}(k_1,k_2) = (f_{c,c}(k_1,k_2), f_{c,d}(k_1,k_2), f_{d,c}(k_1,k_2), f_{d,d}(k_1,k_2))\] to the following function
\[ \tilde{f}_{c,c}(k_1,k_2) + \tilde{f}_{c,d}(k_1,k_2) + \tilde{f}_{d,c}(k_1,k_2) + \tilde{f}_{d,d}(k_1,k_2),  \]
where
\begin{align*}
    \tilde{f}_{c,c}(k_1,k_2)
& = \id_{(c,\xi)\times (c,\xi)} f_{c,c}(\kappa_c^{-1}(k_1),\kappa_c^{-1}(k_2)), \\
    \tilde{f}_{c,d}(k_1,k_2)
& = \id_{(c,\xi)\times (\xi,d)} f_{c,d}(\kappa_c^{-1}(k_1),\kappa_d^{-1}(k_2)), \\
    \tilde{f}_{d,c}(k_1,k_2)
& = \id_{(\xi,d)\times (c,\xi)} f_{d,c}(\kappa_c^{-1}(k_1),\kappa_d^{-1}(k_2)), \\
    \tilde{f}_{d,d}(k_1,k_2)
& = \id_{(\xi,d)\times (\xi,d)} f_{d,d}(\kappa_c^{-1}(k_1),\kappa_d^{-1}(k_2)),
\end{align*}
they are shown in \Pic \ref{fig:jux.map}.
\begin{figure}[htbp]
\begin{center}
\definecolor{pistachio}{rgb}{0.25,1,0.75}
\begin{tikzpicture}[scale=1.2]
\draw [pistachio][line width=8pt][shift={(0, 3)}] (-3,0) -- (3,0);
\draw [pistachio][line width=8pt][shift={(0,-3)}] (-3,0) -- (3,0);
\draw [pistachio][line width=8pt][shift={( 3,0)}] (0,-3) -- (0,3);
\draw [pistachio][line width=8pt][shift={(-3,0)}] (0,-3) -- (0,3);
\fill [pistachio][shift={(-3,-3)}]
     (-1.25,-1.25) -- ( 1.25,-1.25) -- ( 1.25, 1.25) -- (-1.25, 1.25) -- (-1.25,-1.25);
\fill [pistachio][shift={( 3,-3)}]
     (-1.25,-1.25) -- ( 1.25,-1.25) -- ( 1.25, 1.25) -- (-1.25, 1.25) -- (-1.25,-1.25);
\fill [pistachio][shift={( 3, 3)}]
     (-1.25,-1.25) -- ( 1.25,-1.25) -- ( 1.25, 1.25) -- (-1.25, 1.25) -- (-1.25,-1.25);
\fill [pistachio][shift={(-3, 3)}]
     (-1.25,-1.25) -- ( 1.25,-1.25) -- ( 1.25, 1.25) -- (-1.25, 1.25) -- (-1.25,-1.25);
\fill [pistachio][shift={(-3,-3)}]
     (-1,-0.3) -- (-1,-1) -- (-2.5,-0.66)
     node[left]{$\bfS(\II_{\itLamb})^{\oplus 4}$};
\fill [pink]
     (-1.25,-1.25) -- ( 1.25,-1.25) -- ( 1.25, 1.25) -- (-1.25, 1.25) -- (-1.25,-1.25);
\fill [pink] (1,-0.3) -- (1,-1) -- (5,-0.66)
     node[right]{$\bfS(\II_A)$};
\fill[left color=   red!37, right color=    white]
  (-1  ,-1  ) -- (-0.3,-1  ) -- (-0.3,-0.3) -- (-1  ,-0.3);
\fill[ top color=  blue!37,bottom color=    white]
  (-0.3,-1  ) -- ( 1  ,-1  ) -- ( 1  ,-0.3) -- (-0.3,-0.3);
\fill[ top color=    white,bottom color=orange!37]
  (-0.3,-0.3) -- ( 1  ,-0.3) -- ( 1  , 1  ) -- (-0.3, 1  );
\fill[left color=    white, right color=violet!37]
  (-0.3,-0.3) -- (-1  ,-0.3) -- (-1  , 1  ) -- (-0.3, 1  );
\draw [black][line width=1pt]
     (-1,-1) node[ left]{$c$} node[below]{$c$}
  -- ( 1,-1) node[right]{$d$}
  -- ( 1, 1)
  -- (-1, 1) node[above]{$d$}
  -- (-1,-1);
\fill [shift={(-3, 3)}][left color=    white, right color=violet!37]
     (-1,-1) -- ( 1,-1) -- ( 1, 1) -- (-1, 1) -- (-1,-1);
\draw [black][line width=1pt][shift={(-3, 3)}]
     (-1,-1) node[ left]{$c$} node[below]{$c$}
  -- ( 1,-1) node[right]{$d$}
  -- ( 1, 1)
  -- (-1, 1) node[above]{$d$}
  -- (-1,-1);
\fill [shift={( 3, 3)}][ top color=    white,bottom color=orange!37]
     (-1,-1) -- ( 1,-1) -- ( 1, 1) -- (-1, 1) -- (-1,-1);
\draw [black][line width=1pt][shift={( 3, 3)}]
     (-1,-1) node[ left]{$c$} node[below]{$c$}
  -- ( 1,-1) node[right]{$d$}
  -- ( 1, 1)
  -- (-1, 1) node[above]{$d$}
  -- (-1,-1);
\fill[shift={( 3,-3)}][ top color=  blue!37,bottom color=    white]
     (-1,-1) -- ( 1,-1) -- ( 1, 1) -- (-1, 1) -- (-1,-1);
\draw [black][line width=1pt][shift={( 3,-3)}]
     (-1,-1) node[ left]{$c$} node[below]{$c$}
  -- ( 1,-1) node[right]{$d$}
  -- ( 1, 1)
  -- (-1, 1) node[above]{$d$}
  -- (-1,-1);
\fill [shift={(-3,-3)}][left color=   red!37, right color=    white]
     (-1,-1) -- ( 1,-1) -- ( 1, 1) -- (-1, 1) -- (-1,-1);
\draw [black][line width=1pt][shift={(-3,-3)}]
     (-1,-1) node[ left]{$c$} node[below]{$c$}
  -- ( 1,-1) node[right]{$d$}
  -- ( 1, 1)
  -- (-1, 1) node[above]{$d$}
  -- (-1,-1);
\draw[shift={( 0, 3)}][<->] (-1.8,0)--(1.8,0);
\draw[shift={( 0, 3)}]      (0,0) node[above]{$\oplus$};
\draw[shift={( 0,-3)}][<->] (-1.8,0)--(1.8,0);
\draw[shift={( 0,-3)}]      (0,0) node[below]{$\oplus$};
\draw[shift={(-3, 0)}][<->] (0,-1.8)--(0,1.8);
\draw[shift={(-3, 0)}]      (0,0) node[ left]{$\oplus$};
\draw[shift={( 3, 0)}][<->] (0,-1.8)--(0,1.8);
\draw[shift={( 3, 0)}]      (0,0) node[right]{$\oplus$};
\draw[white][line width=1pt] (-0.3,-1)--(-0.3, 1);
\draw[white][line width=1pt] (-1,-0.3)--( 1,-0.3);
\draw[cyan]
  (-0.3,-1) node{$\bullet$} node[below]{$\xi$}
  (-1,-0.3) node{$\bullet$} node[ left]{$\xi$};
\draw[red] (-2,-2) -- (-0.3,-0.3);
\draw[red] (-2.5,-2.0) node[above]{$\kappa_c$};
\draw[red] (-2.0,-2.5) node[right]{$\kappa_c$};
\draw[red][postaction={on each segment={mid arrow = red}}] (-4,-2) -- (-1  ,-0.3);
\draw[red][postaction={on each segment={mid arrow = red}}] (-2,-4) -- (-0.3,-1  );
\draw[red][shift={(-3,-3)}] (0,0) node{$f_{(c,c)}$};
\draw[red][shift={(-0.6,-0.6)}] (0,0) node{\scriptsize$\tilde{f}_{(c,c)}$};
\draw[blue] ( 2,-2) -- (-0.3,-0.3);
\draw[blue] ( 2.5,-2.0) node[above]{$\kappa_d$};
\draw[blue] ( 2.0,-2.5) node[ left]{$\kappa_c$};
\draw[blue][postaction={on each segment={mid arrow = blue}}] ( 4,-2) -- ( 1  ,-0.3);
\draw[blue][postaction={on each segment={mid arrow = blue}}] ( 2,-4) -- (-0.3,-1  );
\draw[blue][shift={( 3,-3)}] (0,0) node{$f_{(d,c)}$};
\draw[blue][shift={( 0.3,-0.6)}] (0,0) node{\scriptsize$\tilde{f}_{(d,c)}$};
\draw[orange] ( 2, 2) -- (-0.3,-0.3);
\draw[orange] ( 2.5, 2.0) node[below]{$\kappa_d$};
\draw[orange] ( 2.0, 2.5) node[ left]{$\kappa_d$};
\draw[orange][postaction={on each segment={mid arrow = orange}}] ( 4, 2) -- ( 1  ,-0.3);
\draw[orange][postaction={on each segment={mid arrow = orange}}] ( 2, 4) -- (-0.3, 1  );
\draw[orange][shift={( 3, 3)}] (0,0) node{$f_{(d,d)}$};
\draw[orange][shift={( 0.3, 0.3)}] (0,0) node{\scriptsize$\tilde{f}_{(d,d)}$};
\draw[violet] (-2, 2) -- (-0.3,-0.3);
\draw[violet] (-2.5, 2.0) node[below]{$\kappa_c$};
\draw[violet] (-2.0, 2.5) node[right]{$\kappa_d$};
\draw[violet][postaction={on each segment={mid arrow = violet}}] (-4, 2) -- (-1  ,-0.3);
\draw[violet][postaction={on each segment={mid arrow = violet}}] (-2, 4) -- (-0.3, 1  );
\draw[violet][shift={(-3, 3)}] (0,0) node{$f_{(c,d)}$};
\draw[violet][shift={(-0.6, 0.3)}] (0,0) node{\scriptsize$\tilde{f}_{(c,d)}$};
\end{tikzpicture}
\caption{Juxtaposition map (in the case for $\dim_{\kk}\itLamb = 2$)}
\label{fig:jux.map}
\end{center}
\end{figure}
One can check that the triple $(\bfS_{\tau}(\II_{\itLamb}), \id_{\II_{\itLamb}}, \gamma_{\xi})$ is an object in $\scrN^p$,
and its completion $(\w{\bfS_{\tau}(\II_{\itLamb})}, \id_{\II_{\itLamb}}, \w{\gamma}_{\xi})$ is an object in $\scrA^p$, see \cite{LLHZpre}.
\end{example}

The following result is important for $\scrA^p$.

\begin{theorem} \label{thm:LLHZpre-1}{\rm(\!\cite[Theorems 6.3]{LLHZpre})}
$(\w{\bfS_{\tau}(\II_{\itLamb})}, \id_{\II_{\itLamb}}, \w{\gamma}_{\xi})$ is an initial object in $\scrA^p$, that is, for any object $(N,v,\delta)$ in $\scrA^p$, there is a unique morphism
\[\w{T}_{(N,v,\delta)} \in \Hom_{\scrA^p}((\w{\bfS_{\tau}(\II_{\itLamb})}, \id_{\II_{\itLamb}}, \w{\gamma}_{\xi}), (N,v,\delta)). \]
\end{theorem}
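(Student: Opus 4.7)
The plan is to establish both existence and uniqueness of the morphism $\w{T}_{(N,v,\delta)}$, exploiting the fact that $\w{\bfS_{\tau}(\II_{\itLamb})}$ is the norm completion of the directed union $\bigcup_u E_u \cong \underrightarrow{\lim} E_u$. Since $E_u$ is spanned by indicator functions of a fixed tree of subdivisions, defining a $\itLamb$-homomorphism on $E_u$ amounts to choosing the images of the $2^{nu}$ generating indicators in a way compatible with the juxtaposition structure, and the initial-object data $(v,\delta)$ is designed precisely to make such a choice canonical.

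For existence, I would construct a compatible family of $\itLamb$-homomorphisms $T_u : E_u \to N$ by recursion on $u$. At $u=0$, the module $E_0 \cong \kk\cdot\id_{\II_{\itLamb}}$, and the requirement $T_0(\id_{\II_{\itLamb}}) = v$ forces $T_0(k\,\id_{\II_{\itLamb}}) = kv$ and $T_0(a\cdot\id_{\II_{\itLamb}}) = T_0(\tau(a)\id_{\II_{\itLamb}}) = av$, so $T_0$ is uniquely determined and $\itLamb$-linear. For the inductive step, any $f \in E_{u+1}$ has a unique juxtaposition decomposition $f = \gamma_{\xi}(f_{\delta_1,\ldots,\delta_n})$ with each component $f_{\delta_1,\ldots,\delta_n} \in E_u$ (after pulling back along the $\kappa$-bijections), and I set
\[ T_{u+1}(f) := \delta\bigl(T_u(f_{c,\ldots,c}),\ldots,T_u(f_{d,\ldots,d})\bigr). \]
The compatibility $T_{u+1}|_{E_u} = T_u$ uses that the juxtaposition decomposition of an element already lying in $E_u$ consists of rescaled copies of the restrictions, combined with the preservation of the distinguished element $\delta(v,\ldots,v)=v$; $\itLamb$-linearity propagates because the $\itLamb$-action on $E_u$ factors through $\tau$. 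Boundedness with respect to the $\oplus_p$-norm follows from the design of the normalization factor $(\mu_{\II}(\II)/\mu(\II_{\itLamb}))^n$, allowing the colimit $T : \bfS_{\tau}(\II_{\itLamb}) \to N$ to extend uniquely to $\w{T} : \w{\bfS_{\tau}(\II_{\itLamb})} \to N$ by completeness of $N$.

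For uniqueness, suppose $\w{T}'$ is any morphism in $\scrA^p$ from $(\w{\bfS_{\tau}(\II_{\itLamb})},\id_{\II_{\itLamb}},\w{\gamma}_{\xi})$ to $(N,v,\delta)$. The defining conditions $\w{T}'(\id_{\II_{\itLamb}}) = v$ and $\w{T}' \circled \w{\gamma}_{\xi} = \delta \circled (\w{T}')^{\oplus_p 2^n}$ force $\w{T}'|_{E_0}$ to agree with $T_0$, and then an induction on $u$ forces $\w{T}'|_{E_u} = T_u$ for all $u$. Hence $\w{T}' = \w{T}$ on the dense subspace $\bigcup_u E_u$, and by continuity $\w{T}' = \w{T}$ on the completion.

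The main obstacle is the passage from $\bfS_{\tau}(\II_{\itLamb})$ to its completion while preserving the equation $\w{T}\circled\w{\gamma}_{\xi} = \delta\circled \w{T}^{\oplus_p 2^n}$. Approximating an element of $\w{\bfS_{\tau}(\II_{\itLamb})}$ by a Cauchy sequence $(f_k) \subset \bfS_{\tau}(\II_{\itLamb})$ produces Cauchy sequences in each of the $2^n$ juxtaposition components, and verifying that $\delta$ commutes with the resulting limit is exactly where the inverse-limit condition built into the definition of an object of $\scrN^p$ (and hence $\scrA^p$) is essential. Once that continuity argument is in place, both the $\itLamb$-module equation and the $\delta$-equation survive the limit, completing the proof.
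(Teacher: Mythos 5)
First, a remark on the comparison itself: the paper does not actually prove Theorem \ref{thm:LLHZpre-1} --- it is imported verbatim from \cite[Theorem 6.3]{LLHZpre} --- so there is no in-paper argument to measure you against. Your strategy (define $T_u$ on the filtration $E_0\subseteq E_1\subseteq\cdots$ by recursion through the juxtaposition isomorphism $E_u^{\oplus_p 2^n}\cong E_{u+1}$ and the structure map $\delta$, check compatibility and $\itLamb$-linearity, pass to the union, extend to the completion, and get uniqueness by the same recursion plus density) is the standard one: it is Leinster's argument for $L^1([c,d])$ in the case $n=1$ and is the shape of the proof in the cited reference. The uniqueness half is correct as written.

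The gap is in the existence half, at exactly the point you yourself flag as ``the main obstacle.'' To extend $T=\bigcup_u T_u$ from the dense subspace $\bfS_{\tau}(\II_{\itLamb})$ to the completion $\w{\bfS_{\tau}(\II_{\itLamb})}$ you need $T$ to be uniformly continuous, i.e., an estimate $\Norm{T(f)}{}\le C\Norm{f}{}$ with $C$ independent of $u$. You assert this ``follows from the design of the normalization factor $(\mu_{\II}(\II)/\mu(\II_{\itLamb}))^n$,'' but no such estimate is derived, and in the axiomatization recalled in this paper it is not free: an object of $\scrN^p$ only requires $\delta$ to be a $\itLamb$-homomorphism sending $(v,\dots,v)$ to $v$ and commuting with inverse limits of monotone decreasing Cauchy sequences --- there is no hypothesis that $\delta$ is bounded, let alone a contraction for the $\oplus_p$-norm. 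Without an operator bound on $\delta$, the recursion $T_{u+1}=\delta\circ T_u^{\oplus 2^n}$ gives no control of $\Norm{T_{u+1}}{}$ in terms of $\Norm{T_u}{}$, and the inverse-limit axiom you invoke instead yields only sequential continuity along monotone decreasing sequences, not continuity on all of $\bfS_{\tau}(\II_{\itLamb})$ (a general Cauchy sequence of simple functions is not monotone). So the extension step, which is the analytic heart of the theorem, is asserted rather than proved. To close it you must either extract a uniform bound on the $T_u$ from the axioms (as Leinster does, using $\Norm{v}{}\le\mu(\II_{\itLamb})$ together with a norm condition on the structure map) or show explicitly how the monotone inverse-limit condition forces continuity on the whole dense subspace; as it stands, neither is done.
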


\subsection{\sectcolor The categorification of Lebesgue integrations}

The following result shows that if $\kk$ is a complete field containing $\RR$, then the morphism $T_{(N,v,\delta)}$ in Theorem \ref{thm:LLHZpre-1} provides a categorification of Lebesgue integrations in the case where $(N,v,\delta)=(\kk,\mu(\II_{\itLamb}),m)$, and $m$ is a $\kk$-linear map of the form $\kk^{\oplus 2} \to \kk$.

\begin{theorem} \label{thm:LLHZpre-2}
{\rm(1)} {\rm(\!\!\cite[Theorems 7.6]{LLHZpre})}
If $\kk$ is a complete field containing $\RR$, then there is an object in $\scrA^p$ of the form $(\kk,\mu(\II_{\itLamb}),m)$ such that
\[ \w{T}_{(\kk,\mu(\II_{\itLamb}),m)}: f \mapsto \w{T}_{(\kk,\mu(\II_{\itLamb}),m)}(f)
\text{ {\rm(}denote it by } \ (\scrA^p)\int_{\II_{\itLamb}} \cdot\dd\mu)\]
is a unique $\itLamb$-homomorphism in $\Hom_{\scrA^p}((\bfS_{\tau}(\II_{\itLamb}),\id_{\II_{\itLamb}}, \gamma_{\xi}), (\kk,1,m))$ and satisfies the following conditions:
\begin{itemize}
\item  $\w{T}_{(\kk,\mu(\II_{\itLamb}),m)}(\id_{\II_{\itLamb}}) = \mu(\II_{\itLamb})$;
\item  $\w{T}_{(\kk,\mu(\II_{\itLamb}),m)}: \bfS_{\tau}(\II_{\itLamb}) \to \kk$ is a homomorphism of $\kk$-modules;
\item  $\w{T}_{(\kk,\mu(\II_{\itLamb}),m)}(|f|) \le |\w{T}_{(\kk,\mu(\II_{\itLamb}),m)}(f)|$.
\end{itemize}

{\rm(2)} {\rm(Leinster)}
If $\scrA^p$ satisfies {\defines L-conditions}, that is,
\begin{enumerate}
\item[{\rm(L1)}] $p=1$;
\item[{\rm(L2)}] $\kk=\RR=\itLamb$ $($in this case, the norms of $\kk$ and $\itLamb$ coincide$)$, or $\kk=\mathbb{C}=\itLamb$ $($in this case, the norm of $\mathbb{C}$ is defined by the modulus of complex numbers$)$;
\item[{\rm(L3)}] $\II=[x_1,x_2]$ $(=\II_{\RR})$;
\item[{\rm(L4)}] $\displaystyle\xi=\frac{x_1+x_2}{2}$, $\displaystyle\kappa_{x_1}(x)=\frac{x+x_1}{2}$, $\displaystyle\kappa_{x_2}(x)=\frac{x+x_2}{2}$;
\item[{\rm(L5)}] $\tau=\mathrm{id}_{\kk}$;
\item[{\rm(L6)}] $\mu: \Sigma(\II_{\itLamb}) \to \RR^{\ge 0}$ is the Lebesgue measure defined on the $\sigma$-algebra $\Sigma(\II_{\itLamb})$,
\end{enumerate}
\checks{then} there exists an object of the form $(\kk,\mu([x_1,x_2]),m)$ in $\scrA^1$ such that
$\w{T}_{(\kk,\mu([x_1,x_2]),m)}: \w{\bfS_{\tau}(\II_{\itLamb})} \to \kk$ is a $\kk$-linear map
sending any $f\in\w{\bfS_{\tau}(\II_{\itLamb})}$ to its Lebesgue integral:
\[ \w{T}_{(\kk,\mu([x_1,x_2]),m)}(f) = \mathrm{(L)}{\int_{x_1}^{x_2}} f\dd\mu. \]
\end{theorem}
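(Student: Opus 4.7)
The plan is to exploit the initiality established in Theorem \ref{thm:LLHZpre-1}: once the target triple $(\kk, \mu(\II_{\itLamb}), m)$ is verified to be an object of $\scrA^p$, the unique morphism $\w{T}_{(\kk, \mu(\II_{\itLamb}), m)}$ emerges automatically, and the three bullet-point properties of part (1) are read off from the axioms of a morphism. For part (2) the direction is reversed: one exhibits the Lebesgue integral itself as an $\scrA^1$-morphism into the specialized target and invokes uniqueness to identify it with $\w{T}$.

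For part (1) I would view $\kk$ as a left $\itLamb$-module via $a \cdot k := \tau(a)k$, normed by $\Norm{k}{\tau} := |k|$, which is Banach because $\kk$ is complete. I take $m : \kk^{\oplus_p 2^n} \to \kk$ to be the normalized sum $m(k_1,\dots,k_{2^n}) := \frac{1}{2^n}\sum_{i=1}^{2^n} k_i$. Routine checks show: $\Norm{\mu(\II_{\itLamb})}{\tau} \le \mu(\II_{\itLamb})$; $m$ is $\itLamb$-linear (the $\itLamb$-action factors through the scalar $\tau(a)$, which commutes with summation); $m$ sends the diagonal element $(\mu(\II_{\itLamb}),\dots,\mu(\II_{\itLamb}))$ to $\mu(\II_{\itLamb})$; and $m$, being a continuous $\kk$-linear map on a finite direct sum, commutes with the inverse limits of monotone decreasing Cauchy sequences required by the $\scrA^p$ axiom. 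Invoking Theorem \ref{thm:LLHZpre-1} yields the unique $\w{T}_{(\kk, \mu(\II_{\itLamb}), m)}$. The first listed property is the distinguished-element condition $f(v)=v'$; the $\kk$-linearity follows from $\itLamb$-linearity together with the observation that the $\itLamb$-action on $\kk$ factors through $\tau$; and the norm inequality is extracted from absolute homogeneity of $\Norm{\cdot}{\tau}$ combined with the boundedness inherent in $\w{T}$ being a morphism of normed modules.

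For part (2), the L-conditions reduce the data to $n = 1$, $\II_{\itLamb} = [x_1,x_2] \subseteq \RR$, $\xi = (x_1+x_2)/2$, and $m(a,b) = \tfrac{1}{2}(a+b)$. By uniqueness in Theorem \ref{thm:LLHZpre-1}, it suffices to show that the Lebesgue integral $\Phi(f) := (\mathrm{L})\int_{x_1}^{x_2} f \dd\mu$ is a morphism in $\scrA^1$ from $(\w{\bfS_{\tau}(\II_{\itLamb})}, \id_{[x_1,x_2]}, \w{\gamma}_{\xi})$ to $(\RR, x_2-x_1, m)$. Three items must be verified: $\Phi(\id_{[x_1,x_2]}) = x_2 - x_1$ (immediate), $\RR$-linearity of $\Phi$ (classical), and the intertwining $\Phi \circled \w{\gamma}_{\xi} = m \circled \Phi^{\oplus 2}$. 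The intertwining follows from Lebesgue additivity $(\mathrm{L})\int_{x_1}^{x_2} = (\mathrm{L})\int_{x_1}^{\xi} + (\mathrm{L})\int_{\xi}^{x_2}$ together with the affine change of variables $\kappa_{x_1}(x) = (x+x_1)/2$ and $\kappa_{x_2}(x) = (x+x_2)/2$, whose Jacobians are both $1/2$ and produce exactly the averaging factor in $m$.

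The hardest step is this intertwining identity on the full completed space $\w{\bfS_{\tau}(\II_{\itLamb})} \cong L^1([x_1,x_2])$. It is transparent on step functions in $E_u$ by the finite change-of-variables computation, but extending it to all of $L^1$ requires joint continuity of $\w{\gamma}_{\xi}$, $m$ and $\Phi$ with respect to the $L^1$-norm; this in turn depends on the prefactor $(\mu_{\II}(\II)/\mu(\II_{\itLamb}))^n$ in the $\oplus_p$-norm, which guarantees that $\w{\gamma}_{\xi}$ extends isometrically from $E_u$ to the completion. A secondary subtlety in part (1) is verifying that $m$ commutes with the inverse limits of monotone decreasing Cauchy sequences: this reduces to continuity of a finite linear combination but must be phrased consistently with the same weighted $\oplus_p$-norm.
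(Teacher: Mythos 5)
This theorem is not proved in the paper: part (1) is imported verbatim from \cite[Theorem 7.6]{LLHZpre} and part (2) is Leinster's result \cite[Proposition 2.2]{Lei2023FA}, so there is no in-paper argument to compare against. Your overall strategy --- verify that $(\kk,\mu(\II_{\itLamb}),m)$ is an object of $\scrA^p$, invoke the initiality of $(\w{\bfS_{\tau}(\II_{\itLamb})},\id_{\II_{\itLamb}},\w{\gamma}_{\xi})$ from Theorem \ref{thm:LLHZpre-1} to obtain the unique morphism, and for part (2) exhibit the Lebesgue integral as a competing morphism and conclude by uniqueness --- is exactly the route the cited sources take, and your treatment of part (2) (additivity over the two halves plus the Jacobian $1/2$ from $\kappa_{x_1},\kappa_{x_2}$ reproducing the averaging in $m$) is sound.

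There are, however, two genuine soft spots in your part (1). First, your choice $m(k_1,\dots,k_{2^n})=\frac{1}{2^n}\sum_i k_i$ is only forced when each $\kappa_c,\kappa_d$ halves the measure, as under the L-conditions. In the general setting of part (1) the interval $\II$, the bijections $\kappa_c,\kappa_d$ and the measure $\mu$ are arbitrary compatible data, and the intertwining $\w{T}\circled\w{\gamma}_{\xi}=m\circled\w{T}^{\oplus 2^n}$ together with the requirement that $\w{T}$ compute $\mu$-integrals (which the paper uses later, e.g.\ $\w{T}(\id_S)=\mu(S)$ in Lemma \ref{lemm:spC surj}) forces the weighted form $m(k_{\delta})=\sum_{\delta}\big(\prod_{i}\mu_{\II}(\kappa_{\delta_i}(\II))/\mu_{\II}(\II)\big)k_{\delta}$; with uniform weights $1/2^n$ the unique morphism out of the initial object would assign $\mu(\II_{\itLamb})/2^n$ to every first-generation sub-box regardless of its actual measure. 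This is precisely why the paper's remark after the theorem warns that ``the definitions of $m$ in different categories $\scrA^p$ are different'' and defers to \cite[Section 7]{LLHZpre}. Second, the third bullet of part (1), $\w{T}(|f|)\le|\w{T}(f)|$, is not addressed by your appeal to ``absolute homogeneity combined with boundedness'': boundedness yields an estimate of the form $|\w{T}(f)|\le C\Norm{f}{\tau}$, which is a different shape of statement, and the printed inequality is in any case the reverse of the classical $|\int f\,\dd\mu|\le\int|f|\,\dd\mu$; a correct argument proves the classical direction on simple functions (triangle inequality for $|\cdot|$ on $\kk$ applied to $\sum k_i\mu(X_i)$) and passes to the completion by continuity.
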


\begin{remark}
The definitions of the $\itLamb$-homomorphisms $m$ in different categories $\scrA^p$ are different; see \cite[Section 7]{LLHZpre} and \cite[Section 2]{Lei2023FA}. However, in this paper, we will not consider the detailed computation of $m$.
\end{remark}

\section{\sectcolor Integral partial ordered sets} \label{sect:iposet}


\subsection{\sectcolor The definition of integral partial ordered set} \label{subsect:iposet}

We fix a finite-dimensional algebra $\itLamb$ with a basis $B_{\itLamb}=\{b_1, \ldots, b_n\}$ and we fix a homomorphism $\tau: \itLamb \to \kk$ of the $\kk$-algebras. Based on the facts in Subsection~\ref{subsec:cats}, we can see that $\scrN^p$ and $\scrA^p$ are determined by the interval $\II=[c,d]_{\kk}$ ($\subseteq\kk$), the maps $\kappa_c: [c,d]_{\kk} \to [c,\xi]_{\kk}$, $ \kappa_d: [c,d]_{\kk} \to [\xi,d]_{\kk}$
and the measure $\mu$ defined on $\Sigma(\II_{\itLamb})$ (or equivalently, the measure $\mu$ defined on $\Sigma(\II)$).
That is, $\scrN^p$ and $\scrA^p$ can be viewed as the quintuples $(\scrN^p_{\itLamb}, \II, \kappa_c, \kappa_d, \mu)$ and $(\scrA^p_{\itLamb}, \II, \kappa_c, \kappa_d, \mu)$, respectively.

\begin{definition} \rm
$\II$ is said to satisfy the {\defines segmentation condition} if it satisfies the following, for any subinterval $[c',d']_{\kk}$ of $\II$, there is an element $\xi'$ such that $c'\prec \xi' \prec d'$ and there are two order-preserving bijections $\kappa_{c'}: [c',d']_{\kk}\to [c',\xi']_{\kk}$, $\kappa_{d'}: [c',d']_{\kk}\to [\xi,d']_{\kk}$ defined on the subintervals.
\end{definition}
We will assume that $\II$ satisfies the segmentation condition. For any subinterval $[c',d']_{\kk}$ of $\II$ and two order-preserving bijections $\kappa_{c'}$ and $\kappa_{d'}$,
we can define $[c',d']_{\itLamb} = \sum_{i=1}^n [c',d']_{\kk}b_i$. In this case, we obtain two categories $(\scrN^p_{\itLamb}, [c',d']_{\kk}, \kappa_{c'}, \kappa_{d'}, \mu|_{\Sigma([c',d']_{\itLamb})})$
and $(\scrA^p_{\itLamb}, [c',d']_{\kk}, \kappa_{c'}, \kappa_{d'}, \mu|_{\Sigma([c',d']_{\itLamb})})$, where the $\sigma$-algebra $\Sigma([c',d']_{\itLamb})$ is a $\sigma$-subalgebra of $\Sigma(\II_{\itLamb})$. For simplification, we do not distinguish $\mu|_{\Sigma([c',d']_{\itLamb})}$ and $\mu$.

Let $\spI$ be a partial ordered subset of $\widetilde{\spI}=\{[t_1,t_2]_{\kk} \mid c\preceq t_1 \preceq t_2 \preceq d \}$ with the partial ordered of $\widetilde{\spI}$ defined by ``$\subseteq$''. We denote by $[t_1,t_2]_{\kk}$ an element of a partial ordered set $\spI$.
\begin{definition} \rm
An {\defines integral partial ordered set} $\w{T}_{\spI}(f)$ of $f\in\w{\bfS_{\tau}(\II_{\itLamb})}$ (\iposet of $f$ in short) is defined as a poset whose elements are pairs of the form $([t_1, t_2]_{\kk}, \w{T}_{t_1}^{t_2}(f)),$
where
\begin{itemize}
  \item[(1)] $\w{T}_{t_1}^{t_2}:=\w{T}_{(\kk, \mu([t_1,t_2]_{\itLamb}), m)}$ is the $\itLamb$-homomorphism in the category $(\scrA^p_{\itLamb}, [t_1,t_2]_{\kk},$ $\kappa_{t_1}, \kappa_{t_2}, \mu)$ from $(\Widehat{\bfS_{\tau}([t_1,t_2]_{\itLamb})}, \id, \gamma)$ to $(\kk, \mu([t_1, t_2]_{\itLamb}), m)$;

\item[(2)] and $[t_1, t_2]_{\itLamb}$ runs through all the elements of $\spI$.
\end{itemize}
\end{definition}

The partial ordered ``$\preceq$'' of $\w{T}_{\spI}(f)$ is induced by $\spI$, that is, the following statements are equivalent:
\begin{itemize}
  \item[(1)] $([s_1, s_2]_{\kk}, \w{T}_{s_1}^{s_2}(f)) \preceq ([t_1, t_2]_{\kk}, \w{T}_{t_1}^{t_2}(f))$;
  \item[(2)] $[s_1, s_2]_{\kk} \subseteq [t_1, t_2]_{\kk}$;
  \item[(3)] The fact that $\w{\bfS_\tau(\II_\Lambda)}$ is the initial object in the category implies that \checkLiu{$\w{T}_{s_1}^{s_2}(f)$ is the restriction of the $\w{T}_{t_1}^{t_2}(f)$}.
  \item[(4)] The fact that $\bfS_\tau(\II_\Lambda)$ is the initial object in the category implies that $T_{s_1}^{s_2}(f)$ is the restriction of the $T_{t_1}^{t_2}(f)$.
\end{itemize}

\begin{example} \rm
\begin{enumerate}
\item Given a function $f\in\bfS_{\tau}(\II_{\itLamb})$, if $\spI$ is an ascending chain $[t_{11}, t_{21}]_{\kk} \subset [t_{12}, t_{22}]_{\kk} \subset [t_{13}, t_{23}]_{\kk} \subset \cdots$ on $\widetilde{\spI}$, then $\w{T}_{\spI}(f) = (\{ ([t_{1i}, t_{2i}]_{\kk}, \w{T}_{t_{1i}}^{t_{2i}}(f)) \mid i\in\NN^+ \}, \preceq). $
 \item Given a function $f\in\bfS_{\tau}(\II_{\itLamb})$, if $\spI = \{[c,t]_{\kk}\mid c\preceq t\preceq d\}$, then the \checkLiu{cardinality} of $\spI$ and  $\w{T}_{\spI}(f)=(\{ ([c,t]_{\kk})$ and $\w{T}_{c}^{t} \mid c\preceq t\preceq d\}, \preceq)$ coincide.
\end{enumerate}
\end{example}

\subsection{\sectcolor An addition induced by integral partial ordered set} \label{subsect:iposet's addi}
For any two elements
\begin{center}
$   ([u,v]_{\kk}, \w{T}_{u}^{v}(f)), ([s,t]_{\kk}, \w{T}_{s}^{t}(g))
\in \w{T}_{\widetilde{\spI}}( \w{\bfS_{\tau}(\II_{\itLamb})} )$
$:=\displaystyle
\checkLiu{
\biguplus
\limits_{
    \spI\subseteq\widetilde{\spI},
    f\in\w{\bfS_{\tau}(\II_{\itLamb})}
  }
}
\w{T}_{\spI}(f)$,
\end{center}
where the symbol $\biguplus$ represents a disjoint union.
The addition functor
\begin{align}\label{formula:addi-map}
+:  \w{T}_{\widetilde{\spI}}( \w{\bfS_{\tau}(\II_{\itLamb})} )
\times \w{T}_{\widetilde{\spI}}( \w{\bfS_{\tau}(\II_{\itLamb})} )
\to  \Sigma(\II_{\itLamb})\times\kk
\end{align}
is defined as follows:
\begin{align}
& \ ([u,v]_{\kk}, \w{T}_{u}^{v}(f)) + ([s,t]_{\kk}, \w{T}_{s}^{t}(g)) \nonumber
\\ :=\ & ( U, \ \w{T}_{\min\{u,v,s,t\}}^{\max\{u,v,s,t\}}
(\id_{[u,v]_{\itLamb}}f+\id_{[s,t]_{\itLamb}}g))  \label{formula:addi T}
\\  =\ &
{\begin{cases}
(U, \w{T}_{u}^{t}(\id_{[u,v]_{\itLamb}}f+\id_{[s,t]_{\itLamb}}g)),
& \text{ if } u\preceq v \prec s\preceq t; \ \spadesuit \\ 
(U, \w{T}_{s}^{v}(\id_{[s,t]_{\itLamb}}g+\id_{[u,v]_{\itLamb}}f)),
& \text{ if } s\preceq t \prec u\preceq v; \ \clubsuit \\ 
(U, \w{T}_{u}^{t}(\id_{[u,s]_{\itLamb}}f + \id_{[s,v]_{\itLamb}}(f+g) + \id_{[v,t]_{\itLamb}}g)),
& \text{ if } u\preceq s \prec v\preceq t; \\ 
(U, \w{T}_{s}^{v}(\id_{[s,u]_{\itLamb}}g + \id_{[u,t]_{\itLamb}}(f+g) + \id_{[t,v]_{\itLamb}}f)),
& \text{ if } s\preceq u \prec t\preceq v; \\ 
(U, \w{T}_{s}^{t}(g+\id_{[u,v]_{\kk}}f)),
& \text{ if } [u,v]_{\kk}\subseteq [s,t]_{\kk}; \ \diamo \\ 
(U, \w{T}_{u}^{v}(f+\id_{[s,t]_{\kk}}g)),
& \text{ if } [s,t]_{\kk}\subseteq [u,v]_{\kk}; \ \heart \\ 
\end{cases}} \label{formula:addi}
\end{align}
where $U=[\min\{u,v,s,t\},\max\{u,v,s,t\}]_{\kk}$.

\begin{lemma} \label{lemm:spcase of addi}
Take $f=g$ in (\ref{formula:addi}).
\begin{itemize}
\item[{\rm(1)}] If $u\preceq v=s\preceq t$, then $U=[u,t]_{\kk}$, and in this case, we have
\[ ([u,v]_{\kk}, \w{T}_{u}^{v}(f)) + ([s,t]_{\kk}, \w{T}_{s}^{t}(f)) =  ([u,t]_{\kk}, \w{T}_{u}^{t}(f)). \]

\item[{\rm(2)}] If $s\preceq t=u\preceq v$, then $U=[s,v]_{\kk}$, and in this case, we have
\[ ([u,v]_{\kk}, \w{T}_{u}^{v}(f)) + ([s,t]_{\kk}, \w{T}_{s}^{t}(f)) =  ([s,v]_{\kk}, \w{T}_{u}^{t}(f)). \]

\item[{\rm(3)}] If $[u,v]_{\kk}=[s,t]_{\kk}$, then $U=[u,v]_{\kk}=[s,t]_{\kk}$, and in this case, we have
\[ ([u,v]_{\kk}, \w{T}_{u}^{v}(f)) + ([s,t]_{\kk}, \w{T}_{s}^{t}(f)) =  ([u,v]_{\kk}, \w{T}_{u}^{v}(2f)). \]
\end{itemize}
\end{lemma}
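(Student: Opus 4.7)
The plan is to specialize the piecewise definition~(\ref{formula:addi}) to $f=g$ in each of the three configurations, exploiting throughout the standing assumption that $\mu(\{v\})=0$ for every $v\in\II$ and the fact that equivalence classes of elementary simple functions in $\w{\bfS_{\tau}(\II_{\itLamb})}$ are insensitive to modifications on $\mu$-null sets. Thus my first move in each case will be to identify which branch of~(\ref{formula:addi}) applies (or to argue that a strict inequality there can be relaxed at the boundary) and then to collapse the resulting integrand using elementary identities for indicator functions.

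For part~(1), the condition $u\preceq v=s\preceq t$ sits at the boundary of branch~$\spadesuit$, and I would substitute into that formula to obtain $\w{T}_{u}^{t}(\id_{[u,v]_{\itLamb}}f+\id_{[v,t]_{\itLamb}}f)$. The key observation is that $\id_{[u,v]_{\itLamb}}+\id_{[v,t]_{\itLamb}}$ and $\id_{[u,t]_{\itLamb}}$ agree outside the fiber of $\II_{\itLamb}$ over $\{v\}$, a set of $\mu$-measure zero; hence the two integrands define the same class in $\w{\bfS_{\tau}([u,t]_{\itLamb})}$. Since $\id_{[u,t]_{\itLamb}}$ is the constant $1$ on the integration domain, the integrand further reduces to $f$, yielding $([u,t]_{\kk},\w{T}_{u}^{t}(f))$ as required. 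Part~(2) is the mirror statement obtained by swapping $(u,v)\leftrightarrow(s,t)$ and applying branch~$\clubsuit$.

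For part~(3), the equality $[u,v]_{\kk}=[s,t]_{\kk}$ forces $u=s$ and $v=t$, so we lie in branches~$\diamo$ and~$\heart$ simultaneously. Invoking~$\diamo$ gives $([s,t]_{\kk},\w{T}_{s}^{t}(g+\id_{[u,v]_{\kk}}f))$, and substituting $f=g$ together with the fact that the indicator reduces to $1$ on the integration domain collapses this to $([u,v]_{\kk},\w{T}_{u}^{v}(2f))$. Applying~$\heart$ in place of~$\diamo$ produces the same expression, which serves as a welcome consistency check that the piecewise definition is well-posed at this overlap.

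The only real obstacle is a bookkeeping one: the six branches of~(\ref{formula:addi}) are written with strict inequalities at exactly the points $v=s$, $t=u$, and $[u,v]_{\kk}=[s,t]_{\kk}$ where the three cases of the lemma arise, so one must verify that the degenerate substitutions yield an unambiguous answer. This is handled uniformly by the null-measure remark above, and no computation beyond indicator-function manipulation and a single appeal to $\id_{[u,t]_{\itLamb}}\equiv 1$ on $[u,t]_{\itLamb}$ is required.
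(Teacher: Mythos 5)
Your proposal is correct and follows essentially the same route as the paper: parts (1) and (2) are read off from the $\spadesuit$ (resp.\ $\clubsuit$) branch of (\ref{formula:addi}), and part (3) from $\diamo$ or $\heart$ via the collapse $f+\id_{[u,v]_{\kk}}f=2f$ on the common interval. Your explicit treatment of the boundary cases (the strict inequalities in the branches and the identity $\id_{[u,v]}+\id_{[v,t]}=\id_{[u,t]}$ up to the $\mu$-null fiber over $\{v\}$) is a detail the paper's one-line "direct corollaries" glosses over, and is a welcome addition rather than a deviation.
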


\begin{proof}
Statements (1) and (2) are direct corollaries of (\ref{formula:addi}) $\spadesuit$, respectively.
The statement (3) can be obtain by $\diamo$ or $\heart$ as follows:
\begin{align*}
& ([u,v]_{\kk}, \w{T}_{u}^{v}(f)) + ([s,t]_{\kk}, \w{T}_{s}^{t}(f)) \\
= \ & (U, \w{T}_{s}^{t}(f+\id_{[u,v]_{\kk}}f)) \\
= \ & (U, \w{T}_{s}^{t}(\id_{U}f+\id_{U}f)) \\
= \ & (U, \w{T}_{s}^{t}(2f)) \\
= \ & (U, \w{T}_{u}^{v}(2f)).
\end{align*}
\end{proof}

\begin{lemma} \label{lemm:im(+) linear}
The image of the map $+$ given by (\ref{formula:addi-map}) is a $\kk$-linear space, where the $\kk$-action $$\kk\times \im(+) \to \im(+)$$ is defined by $k\cdot(S,r) := (S,kr)$.
\end{lemma}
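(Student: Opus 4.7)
The plan is to give every element of $\im(+)$ a canonical representation of the form $(U,\w{T}_u^v(h))$ with $h\in\w{\bfS_\tau(\II_{\itLamb})}$, and then transport the $\kk$-linear structure from the second coordinate by using the fact that $\w{T}_u^v$ is a $\kk$-module homomorphism (Theorem~\ref{thm:LLHZpre-2}~(1)).

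First I would unpack the case-by-case definition (\ref{formula:addi}) and observe that every branch already produces a pair of the shape $([\alpha,\beta]_{\kk},\w{T}_\alpha^\beta(h))$ for a suitable function $h\in\w{\bfS_\tau(\II_{\itLamb})}$; this gives the containment of $\im(+)$ in the set of all such pairs. Conversely, applying the $\diamo$ branch to $([u,v]_{\kk},\w{T}_u^v(h))$ together with $([u,v]_{\kk},\w{T}_u^v(0))$ reproduces the pair $([u,v]_{\kk},\w{T}_u^v(h))$ itself, so one obtains the convenient description
\[
\im(+) \;=\; \bigl\{(U,\w{T}_u^v(h)) \;\bigm|\; [u,v]_{\kk}\subseteq\II,\ h\in\w{\bfS_\tau(\II_{\itLamb})}\bigr\}.
\]

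Next I would check closure under the scalar action. Given $k\in\kk$ and $(U,\w{T}_u^v(h))\in\im(+)$, the $\kk$-linearity of $\w{T}_u^v$ yields
\[
k\cdot(U,\w{T}_u^v(h)) \;=\; (U, k\,\w{T}_u^v(h)) \;=\; (U,\w{T}_u^v(kh)),
\]
and since $kh\in\w{\bfS_\tau(\II_{\itLamb})}$ the right-hand side lies in $\im(+)$ by the description above. The unit, associativity, and distributivity axioms then reduce to the corresponding identities in $\kk$, since the action only modifies the second coordinate. For the ambient addition, the $\heart$/$\diamo$ branch of (\ref{formula:addi}) specialized to matching intervals $[s,t]_{\kk}=[u,v]_{\kk}$ gives $(U,\w{T}_u^v(f))+(U,\w{T}_u^v(g))=(U,\w{T}_u^v(f+g))$ by the $\kk$-linearity of $\w{T}_u^v$, which is exactly componentwise addition in the second factor.

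The main obstacle will be well-definedness of the prescription $k\cdot(U,r):=(U,kr)$: a single pair $(U,r)\in\im(+)$ can arise as $(U,\w{T}_u^v(h))$ for non-unique $h$ (any two representatives differing on a $\mu$-null set yield the same image, and the map~$+$ itself identifies inputs from different branches of (\ref{formula:addi}) that compute the same value). One must therefore verify that scaling the second coordinate does not depend on the chosen representative. This coherence is guaranteed by the uniqueness clause of Theorem~\ref{thm:LLHZpre-1}: $\w{T}_u^v$ is a bona fide $\itLamb$-homomorphism out of the initial object $(\w{\bfS_\tau([u,v]_{\itLamb})},\id,\w{\gamma}_{\xi})$, so the element $r\in\kk$ unambiguously determines $kr\in\kk$ independently of any preimage $h$, and the action descends cleanly to $\im(+)$.
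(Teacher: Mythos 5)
Your proposal is correct and follows essentially the same route as the paper's proof: both write a generic element of $\im(+)$ in the canonical form $(U,\w{T}_{y_1}^{y_2}(h))$ coming from (\ref{formula:addi T}), push the scalar inside using the fact that $\w{T}_{y_1}^{y_2}$ is a $\itLamb$-homomorphism to get closure, and then reduce the vector-space axioms to identities in the second coordinate via the matching-interval case of (\ref{formula:addi}). Your explicit characterization of $\im(+)$ via the $\diamo$ branch and the remark on representative-independence are minor refinements of, not departures from, the paper's argument.
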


\begin{proof}
One can check that $\im(+)$ is an Abelian group with zero element $(\varnothing,0)$.
Let $k, k_1, k_2 \in \kk$ and $(S,r), (S_1,r_1), (S_2,r_2) \in \im(+)$. Firstly, we will show that $\im(+)$ is a $\kk$-linear space. In order to show $k\cdot(S,r)\in \im(+)$ for all $k\in\kk$, we can assume $(S,r) = ( [y_1, y_2]_{\kk}, \ \w{T}_{y_1}^{y_2}
(\id_{[u,v]_{\itLamb}}f+\id_{[s,t]_{\itLamb}}g))$
by (\ref{formula:addi T}), where $y_1=\min\{u,v,s,t\}$ and $y_2=\max\{u,v,s,t\}$. Then
\begin{align}\label{formula:im(+) linear}
k(S,r) = (S,kr) = ([y_1, y_2]_{\kk}, \ k\cdot \w{T}_{y_1}^{y_2}
(\id_{[u,v]_{\itLamb}}f+\id_{[s,t]_{\itLamb}}g)).
\end{align}
Notice that $\w{T}_{y_1}^{y_2} = \w{T}_{(\kk, \mu([y_1,y_2]_{\itLamb}), m)}$ is a $\itLamb$-homomorphism
in the category $(\scrA^p_{\itLamb}, [y_1,y_2]_{\itLamb}, \kappa_{y_1},$ $\kappa_{y_2}, \mu)$,
and $\w{\bfS_{\tau}(\II_{\itLamb})}$ is a left $\itLamb$-module,
thus
\[ k\cdot \w{T}_{y_1}^{y_2} (\id_{[u,v]_{\itLamb}}f+\id_{[s,t]_{\itLamb}}g)
= \w{T}_{y_1}^{y_2} (\id_{[u,v]_{\itLamb}}kf+\id_{[s,t]_{\itLamb}}kg). \]
Then the right of (\ref{formula:im(+) linear}) equals the sum
$([u,v]_{\kk}, \w{T}_{u}^{v}(kf)) + ([s,t]_{\kk}, \w{T}_{s}^{t}(kg)),$
it is an element in $\im(+)$.

Secondly, we have the following equations.
\begin{itemize}
\item[(a)] $1\cdot(S,r) = (S, 1\cdot r) = (S, r)$, where $1$ is the identity element of $\kk$.
\item[(b)] $(k_1k_2)\cdot (S,r) = (S, (k_1k_2)r) = (S, k_1(k_2r)) = k_1(S, k_2r) = k_1\cdot(k_2\cdot (S,r))$.
\item[(c)] $(k_1+k_2)\cdot (S,r) = k_1(S,r) + k_2(S,r)$.
\item[(d)] $k((S_1,r_1)+(S_2,r_2)) = k(S_1,r_1)+k(S_2,r_2)$.
\end{itemize}
The proofs of (a) and (b) are straightforward. We will show (c); the proof of (d) is similar to (c).

\begin{align*}
\text{The right of (c)}=\ \ \ & k_1(S,r) + k_2(S,r) = (S,k_1r)+(S,k_2r) \\
\mathop{=\!=}^{(\ref{formula:im(+) linear})} \
& ([y_1, y_2]_{\kk}, \ k_1\cdot \w{T}_{y_1}^{y_2}
(\id_{[u,v]_{\itLamb}}f+\id_{[s,t]_{\itLamb}}g)) \\
& + ([y_1, y_2]_{\kk}, \ k_2\cdot \w{T}_{y_1}^{y_2}
(\id_{[u,v]_{\itLamb}}f+\id_{[s,t]_{\itLamb}}g)) \\
=\ \ & ([y_1, y_2]_{\kk}, \ \w{T}_{y_1}^{y_2}
(k_1(\id_{[u,v]_{\itLamb}}f+\id_{[s,t]_{\itLamb}}g))) \\
& + ([y_1, y_2]_{\kk}, \ \w{T}_{y_1}^{y_2}
(k_2(\id_{[u,v]_{\itLamb}}f+\id_{[s,t]_{\itLamb}}g))) \\
\mathop{=\!=}^{(\ref{formula:addi T})} \
& ([y_1, y_2]_{\kk}, \ \w{T}_{y_1}^{y_2}
((k_1+k_2)(\id_{[u,v]_{\itLamb}}f+\id_{[s,t]_{\itLamb}}g))) \\
=\ \ & ([y_1, y_2]_{\kk}, \ (k_1+k_2)\w{T}_{y_1}^{y_2}
(\id_{[u,v]_{\itLamb}}f+\id_{[s,t]_{\itLamb}}g)) \\
=\ \ & ([y_1, y_2]_{\kk}, (k_1+k_2)r) \\
=\ \ & (S, (k_1+k_2)r) = (k_1+k_2)\cdot (S,r) = \text{the left of (c)}.
\end{align*}

\end{proof}

\begin{lemma} \label{lemm:im(+) mod pre}
For any $a\in \itLamb$ and $(S,r)\in \im(+)$, we have $\tau(a)(S,r) \in \im(+)$.
\end{lemma}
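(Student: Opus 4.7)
The plan is to unpack the definition of $\im(+)$, use the $\itLamb$-homomorphism property of $\w{T}_{y_1}^{y_2}$ to absorb the scalar $\tau(a)$ into the integrand, and then re-recognize the result as the image of the addition map applied to modified inputs. I would proceed in three steps.

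First, fix $(S,r)\in\im(+)$. By definition, there exist
$([u,v]_{\kk},\w{T}_u^v(f))$ and $([s,t]_{\kk},\w{T}_s^t(g))$ in
$\w{T}_{\widetilde{\spI}}(\w{\bfS_\tau(\II_{\itLamb})})$ with
$(S,r)=([u,v]_{\kk},\w{T}_u^v(f))+([s,t]_{\kk},\w{T}_s^t(g))$. Using formula (\ref{formula:addi T}), this equals $(U,\w{T}_{y_1}^{y_2}(\id_{[u,v]_{\itLamb}}f+\id_{[s,t]_{\itLamb}}g))$ with $y_1=\min\{u,v,s,t\}$, $y_2=\max\{u,v,s,t\}$, and $U=[y_1,y_2]_{\kk}$. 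By the definition of the $\kk$-action on $\im(+)$ from Lemma \ref{lemm:im(+) linear}, we then have
\[
\tau(a)\cdot(S,r) \;=\; \bigl(U,\ \tau(a)\cdot\w{T}_{y_1}^{y_2}(\id_{[u,v]_{\itLamb}}f+\id_{[s,t]_{\itLamb}}g)\bigr).
\]

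Second, I would push the scalar $\tau(a)$ inside. Recall that on $\w{\bfS_{\tau}(\II_{\itLamb})}$, the left $\itLamb$-action is exactly scalar multiplication by $\tau(a)$, that is $a\cdot h = \tau(a)h$ for every $h\in\w{\bfS_{\tau}(\II_{\itLamb})}$; similarly, the $\itLamb$-action on the target $\kk$ of $\w{T}_{y_1}^{y_2}$ sends $r$ to $\tau(a)r$. Since $\w{T}_{y_1}^{y_2}$ is a $\itLamb$-homomorphism in the sense defined in Subsection \ref{subsec:cats}, we obtain
\[
\tau(a)\cdot\w{T}_{y_1}^{y_2}\bigl(\id_{[u,v]_{\itLamb}}f+\id_{[s,t]_{\itLamb}}g\bigr)
\;=\; \w{T}_{y_1}^{y_2}\bigl(a\cdot(\id_{[u,v]_{\itLamb}}f+\id_{[s,t]_{\itLamb}}g)\bigr)
\;=\; \w{T}_{y_1}^{y_2}\bigl(\id_{[u,v]_{\itLamb}}(af)+\id_{[s,t]_{\itLamb}}(ag)\bigr),
\]
where in the last equality I used that the characteristic functions $\id_{[u,v]_{\itLamb}},\id_{[s,t]_{\itLamb}}$ are $\kk$-valued and hence commute with the $\itLamb$-action $h\mapsto\tau(a)h$.

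Third, I would re-read this expression as the image of the addition map (\ref{formula:addi-map}) applied to the pair $([u,v]_{\kk},\w{T}_u^v(af))$ and $([s,t]_{\kk},\w{T}_s^t(ag))$; both belong to $\w{T}_{\widetilde{\spI}}(\w{\bfS_{\tau}(\II_{\itLamb})})$ since $af,ag\in\w{\bfS_{\tau}(\II_{\itLamb})}$. Applying (\ref{formula:addi T}) to this new pair produces exactly $(U,\w{T}_{y_1}^{y_2}(\id_{[u,v]_{\itLamb}}(af)+\id_{[s,t]_{\itLamb}}(ag)))$, which by the previous step equals $\tau(a)\cdot(S,r)$. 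Hence $\tau(a)\cdot(S,r)\in\im(+)$, as required. The only delicate point — which is not really an obstacle once the conventions are spelled out — is keeping track that "scalar multiplication by $\tau(a)$" on $\im(+)$ and the "intrinsic $\itLamb$-action" on $\w{\bfS_{\tau}(\II_{\itLamb})}$ and on $\kk$ all agree, so that the $\itLamb$-homomorphism property of $\w{T}_{y_1}^{y_2}$ can be invoked cleanly.
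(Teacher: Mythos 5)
Your argument is correct and is essentially the paper's approach: since $\tau(a)\in\kk$, the claim is just closure of $\im(+)$ under the $\kk$-scalar action, which the paper dispatches in one line by citing Lemma \ref{lemm:im(+) linear}. What you write out in detail (absorbing the scalar into the integrand via the $\itLamb$-homomorphism property of $\w{T}_{y_1}^{y_2}$ and re-reading the result as a sum) is precisely the content of the first part of the proof of that lemma, so you have simply inlined it rather than cited it.
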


\begin{proof}
By the $\kk$-linear space structure of $\im(+)$, we have $\tau(a)(S,r) = (S,\tau(a)r) \in \im(+)$ by Lemma \ref{lemm:im(+) linear}.
\end{proof}

\begin{proposition} \label{prop:im(+) mod}
The image $\im(+)$ is a left $\itLamb$-module whose left $\itLamb$-action is defined as
\[ \itLamb \times \im(+) \to \im(+),\]
\begin{center}
$(a,(S,r)) \mapsto a\star(S,r):=(S,\tau(a)r).$
\end{center}
\end{proposition}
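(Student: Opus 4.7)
The plan is to reduce the proposition to the two preceding lemmas together with the defining properties of $\tau$ as a $\kk$-algebra homomorphism. Well-definedness of the assignment $a\star(S,r):=(S,\tau(a)r)$ is already packaged as Lemma~\ref{lemm:im(+) mod pre}, which shows $\tau(a)(S,r)\in\im(+)$. Furthermore, Lemma~\ref{lemm:im(+) linear} equips $\im(+)$ with an abelian-group structure and a compatible $\kk$-action $k\cdot(S,r)=(S,kr)$. So only the five left $\itLamb$-module axioms need to be checked, and each of them is a statement purely about second coordinates in $\kk$.

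Concretely, I would verify the axioms in this order. Additivity in the module argument, $a\star((S_1,r_1)+(S_2,r_2))=a\star(S_1,r_1)+a\star(S_2,r_2)$, follows by writing the sum via (\ref{formula:addi T}), multiplying the second coordinate by $\tau(a)\in\kk$, and invoking item (d) of the proof of Lemma~\ref{lemm:im(+) linear} with $k=\tau(a)$. Additivity in the algebra argument, $(a+a')\star(S,r)=a\star(S,r)+a'\star(S,r)$, reduces to $\tau(a+a')=\tau(a)+\tau(a')$ combined with item (c) of the same lemma's proof. Associativity $a'\star(a\star(S,r))=(a'a)\star(S,r)$ is the identity $\tau(a')\tau(a)=\tau(a'a)$ read off from the second coordinate. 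Unitality $1_{\itLamb}\star(S,r)=(S,r)$ reduces to $\tau(1_{\itLamb})=1_{\kk}$. Finally, the scalar-compatibility axiom $(ka)\star(S,r)=k\cdot(a\star(S,r))=a\star(k\cdot(S,r))$ collapses to $\tau(ka)=k\tau(a)$ and associativity of $\kk$-multiplication.

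No step is a genuine obstacle: every axiom becomes a scalar identity in $\kk$ after reading off second coordinates through the canonical representative $(S,r)=([y_1,y_2]_{\kk},\w{T}_{y_1}^{y_2}(\id_{[u,v]_{\itLamb}}f+\id_{[s,t]_{\itLamb}}g))$ supplied by (\ref{formula:addi T}). The only bookkeeping caveat is to apply scalars through a common representative so that the $\kk$-action of Lemma~\ref{lemm:im(+) linear} and the proposed $\itLamb$-action operate on the same second coordinate; once this is fixed, the computations carried out in Lemma~\ref{lemm:im(+) linear} transfer essentially verbatim with $k$ replaced by $\tau(a)$, and the unital $\kk$-algebra homomorphism hypothesis on $\tau$ converts each module axiom for $\itLamb$ into the corresponding known axiom for the $\kk$-action.
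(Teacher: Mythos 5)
Your proposal is correct and follows essentially the same route as the paper: both reduce the proposition to Lemma~\ref{lemm:im(+) linear} (the $\kk$-linear structure on $\im(+)$) and Lemma~\ref{lemm:im(+) mod pre} (closure under the action), and then verify the module axioms by reading them off as scalar identities in $\kk$ coming from $\tau$ being a unital $\kk$-algebra homomorphism. The paper writes out only the analogues of your axioms (d') and (e') explicitly and declares the rest similar, so your slightly more systematic enumeration is just a fuller version of the same argument.
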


\begin{proof}
By Lemmas \ref{lemm:im(+) linear} and \ref{lemm:im(+) mod pre}, we only need to prove that the following equations for all $k\in\kk$, $a, a_1, a_2 \in \itLamb$ and $(S,r), (S_1,r_1), (S_2,r_2) \in \im(+)$.
\begin{itemize}
\item[(a')] $1\star(S,r) = (S, \tau(1)r) = (S, r)$,
where $1$ is the identity element of $\itLamb$, and $\tau(1)$ is the identity element of $\kk$.
\item[(b')] $(a_1a_2)\star(S,r) = a_1\star(a_2\star(S,r))$.
%

\item[(c')] $(a_1+a_2)\star (S,r) = a_1\star(S,r) + a_2\star(S,r)$.

%

\item[(d')]  $a\star((S_1,r_1)+(S_2,r_2)) = a\star(S_1,r_1)+a\star(S_2,r_2)$.

%
\item[(e')] $(ka)\star (S,r) \mathop{=}\limits^{\spadesuit} a\star(k(S,r))
\mathop{=}\limits^{\clubsuit} k(a\star(S,r))$.
\end{itemize}
We only prove (d') and (e'), and the proofs of (a'), (b'), and (c') are similar to (d').

By the definition of left $\itLamb$-action, we have
\begin{align*}
a \star((S_1,r_1)+(S_2,r_2))
=\ & \tau(a)((S_1,r_1)+(S_2,r_2)) \\
\mathop{=}\limits^{\heart}\
& \tau(a)(S_1,r_1) + \tau(a)(S_2,r_2) \\
=\ & a\star(S_1,r_1) + a\star (S_2,r_2),
\end{align*}
where $\heart$ is obtained by Lemma \ref{lemm:im(+) linear}.

Since $\tau$ is a homomorphism between two finite-dimensional $\kk$-algebras $\itLamb$ and $\kk$, it is clear that $\tau(ka)=\tau(k)\tau(a)=k\tau(1)\tau(a)=k\tau(a)$.
Thus, we have
\[(ka)\star (S,r) = k \tau(a) (S,r) = \tau(a)k (S,r) \mathop{=}\limits^{\diamo} \tau(a)(r(S,k)) = a\star(r(S,k)),\]
where $\diamo$ is obtained by Lemma \ref{lemm:im(+) linear}. Therefore, $\spadesuit$ holds. We can prove $\clubsuit$ in a similar way.
\end{proof}

\subsection{\sectcolor Map $\spC$ in the sense of the category $\scrA^p$} \label{subsect:spC}
For a variable $t$, the pair $([\alpha,t]_{\kk}, \w{T}_{\alpha}^{t}(f))$ induces a function $[\alpha, t] \mapsto \w{T}_{\alpha}^{t}(f)$ whose variable is an interval variable.
Each pair $([\alpha,t]_{\kk}, \w{T}_{\alpha}^{t}(f))$ corresponds to a pair $(t, (\scrA^p)\int_{[\alpha,t]_{\itLamb}} f\dd\mu)$ in $\kk\times\kk$. The pair $(t, (\scrA^p)\int_{[\alpha,t]_{\itLamb}} f\dd\mu)$ induces a \checkLiu{function} $t\mapsto (\scrA^p)\int_{[\alpha,t]_{\itLamb}} f\dd\mu$ with a numeric variable.
In this subsection, we will consider this correspondence, see (\ref{mapsto:spC}), which will help us categorify the integral with variable upper limit.

\subsubsection{\sectcolor Map $\spC$} \label{subsubsect:3.3.1}
For any $f\in\w{\bfS_{\tau}(\II_{\itLamb})}$ and $\alpha\in (c,d)_{\kk} := \{ t \in \II \mid c\preceq t\preceq d\}$,
let $\spI=\{[\alpha,t]_{\itLamb} = \sum_{i=1}^n[\alpha,t]b_i \mid \alpha\le t\le d\}$, we obtain
\[
\w{T}_{\spI}(f)
= \{ ([\alpha,t]_{\kk}, \w{T}_{\alpha}^{t}(f))
\mid \alpha\le t\le d
\}
\subseteq \Sigma(\II_{\itLamb})\times\kk.
\]
For any $f\in\Widehat{\bfS_{\tau}([c,d])}$ and $\alpha \in (c,d)_{\kk}$, we can define the following map
\[ \spC_{\alpha}: \frakS_{\alpha,f} :=
\bigg\{
\bigg(t, (\scrA^p)\int_{[\alpha,t]_{\itLamb}} f\dd\mu \bigg)
\in \kk\times \kk
\ \bigg| \
\alpha\le t\le d
\bigg\}
\longrightarrow
\w{T}_{\spI}(f) \]
by
\begin{align}\label{mapsto:spC}
\bigg(t, (\scrA^p)\int_{[\alpha,t]_{\itLamb}} f\dd\mu \bigg)
\mapsto
([\alpha,t]_{\kk}, \w{T}_{\alpha}^{t}(f)).
\end{align}
Furthermore, the above definition induces a map $\spC$ from
$ \bigcup_{\alpha \in (c,d)_{\kk} \atop f\in \w{\bfS_{\tau}(\II_{\itLamb})}}
\frakS_{\alpha, f}
= \frakS_{(c,d)_{\kk}, \w{\bfS_{\tau}(\II_{\itLamb})}} $
($=\frakS$ when there is no confusion) to $\Sigma(\II_{\itLamb})\times\kk$.

\subsubsection{\sectcolor Surjection $\spC^{\natural}\circ\spC$ }


For any subset $I\in\Sigma(\II_{\itLamb})$ of $\II_{\itLamb}$,
the following lemma shows that each integral $(\scrA^p) \int_{I} f\dd\mu$ on $I$
can be viewed as an integral on $\II_{\itLamb}$.

\begin{lemma} \label{lemm:spC surj}
Let $\spC^{\natural}$ be the map $\Sigma(\II_{\itLamb})\times\kk \to \{\II_{\itLamb}\}\times\kk$ defined by $(S, k) \to (\II_{\itLamb}, k)$ for any $S\in\Sigma(\II_{\itLamb})$ and $k\in\kk$.
\begin{itemize}
\item[{\rm(1)}]
The composition
\[ \spC^{\natural} \circled \spC: \ \
\xymatrix@C=1.5cm{ \frakS  \ar[r]^{\spC}
& \Sigma(\II_{\itLamb})\times\kk \ar[r]^{\spC^{\natural}}
& \{\II_{\itLamb}\}\times\kk} \]
is surjective.

\item[{\rm(2)}]
Furthermore, for any $(S, k)\in \Sigma(\II_{\itLamb})\times\kk$,
there is an element $(t, (\scrA^p)\int_{\II_{\itLamb}} f \dd\mu )$ in $\frakS$ such that
\[  \spC^{\natural} \circled \spC \bigg(t, (\scrA^p)\displaystyle\int_{\II_{\itLamb}} f \dd\mu \bigg)
= (\II_{\itLamb}, k)
= \spC^{\natural}(S,k). \]
\end{itemize}
\end{lemma}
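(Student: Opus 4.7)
The plan is to prove (2) directly by exhibiting an explicit preimage, and then deduce (1) as an immediate consequence. For part (2), fix an arbitrary $(S,k) \in \Sigma(\II_{\itLamb}) \times \kk$. By the definition of $\spC^{\natural}$ we have $\spC^{\natural}(S,k) = (\II_{\itLamb}, k)$, so it suffices to produce an element of $\frakS$ whose image under $\spC^{\natural} \circled \spC$ is $(\II_{\itLamb}, k)$.

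The natural strategy is to choose the interval so that $\spC$ already lands at $\II_{\itLamb}$ after applying $\spC^{\natural}$, and to choose $f$ so that the integral equals $k$. Concretely, I would take $\alpha = c$, $t = d$, and $f = \frac{k}{\mu(\II_{\itLamb})} \id_{\II_{\itLamb}} \in \w{\bfS_{\tau}(\II_{\itLamb})}$. This is legitimate because $\II$ is non-degenerate (the condition $c \prec \xi \prec d$ together with the segmentation setup forces $\mu(\II_{\itLamb}) > 0$), so the scalar $\frac{k}{\mu(\II_{\itLamb})}$ is a well-defined element of $\kk$.

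To verify that this $f$ works, I would invoke Theorem \ref{thm:LLHZpre-2}(1): the unique morphism $\w{T}_{c}^{d} = \w{T}_{(\kk, \mu(\II_{\itLamb}), m)}$ is $\kk$-linear and sends $\id_{\II_{\itLamb}}$ to $\mu(\II_{\itLamb})$. Hence
\[
(\scrA^p)\int_{\II_{\itLamb}} f \dd\mu \;=\; \w{T}_{c}^{d}(f) \;=\; \frac{k}{\mu(\II_{\itLamb})}\, \w{T}_{c}^{d}(\id_{\II_{\itLamb}}) \;=\; k.
\]
Therefore the pair $(d, k) \in \frakS_{c, f} \subseteq \frakS$ is sent by $\spC$ to $([c,d]_{\kk}, k) = (\II, k)$, and then by $\spC^{\natural}$ to $(\II_{\itLamb}, k) = \spC^{\natural}(S,k)$, establishing (2). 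Part (1) is then immediate: every element of $\{\II_{\itLamb}\} \times \kk$ has the form $(\II_{\itLamb}, k)$ for some $k \in \kk$, and the construction above exhibits a preimage in $\frakS$.

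I do not anticipate any serious obstacle. The only two points that warrant care are (i) ensuring that $\frakS_{\alpha, f}$ admits the boundary value $\alpha = c$ — this is covered by the paper's convention in Subsection \ref{subsubsect:3.3.1} that $(c,d)_{\kk} = \{t \in \II \mid c \preceq t \preceq d\}$ includes the endpoints — and (ii) confirming that $\mu(\II_{\itLamb}) \neq 0$, which follows from the standing assumptions on the measure together with the non-degeneracy of $\II$. Neither issue obstructs the argument.
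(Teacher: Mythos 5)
Your proof is correct and follows essentially the same route as the paper: both arguments exhibit an explicit preimage by integrating a scaled indicator function and then observing that $\spC^{\natural}$ collapses the first coordinate to $\II_{\itLamb}$. The only difference is your choice of $f = \frac{k}{\mu(\II_{\itLamb})}\id_{\II_{\itLamb}}$ versus the paper's $f = \frac{k}{\mu(S)}\id_{S}$; your variant is in fact slightly more robust, since it avoids dividing by $\mu(S)$, which can vanish (e.g.\ for singletons), and only requires $\mu(\II_{\itLamb}) \neq 0$, which you correctly justify.
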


\begin{proof}
For any pair $(S, k)$ in $\Sigma(\II_{\itLamb})\times\kk$, let
\begin{center}
$f = \displaystyle \frac{k}{\mu(S)}\cdot\id_{S} : \II_{\itLamb} \to \kk$,
$x \mapsto
\begin{cases}
1, & \text{ if } x \in S; \\
0, & \text{ if } x \in \II_{\itLamb}\backslash S,
\end{cases}$
\end{center}
then we obtain
\[ \w{T}_{(\kk, \mu(\II_{\itLamb}), m)}(f)
= (\scrA^p)\int_{\II_{\itLamb}} f \dd\mu
= \frac{k}{\mu(S)} \cdot (\scrA^p)\int_{\II_{\itLamb}} \id_{S} \dd\mu
= \frac{k}{\mu(S)} \cdot \mu(S)
= k.
\]
Thus, there is an element
\[
([c,d]_{\kk}, \w{T}_{c}^{d}(f))
= \bigg( [c,d]_{\kk} , (\scrA^p)\int_{\II_{\itLamb}} f \dd\mu \bigg)
= ([c,d]_{\kk}, k)  \]
in $\frakS$, it may not be equal to $(S,k)$.
It is easy to see the following facts
\begin{itemize}
\item[(a)] $\{\II_{\itLamb}\}\times\kk$ is a subset of $\Sigma(\II_{\itLamb})\times\kk$ since $\II_{\itLamb} = [c,d]_{\itLamb} = \sum\limits_{i=1}^n [c,d]_{\itLamb}b_i$ is measurable;
\item[(b)] $\spC^{\natural}(S,k) = ([c,d]_{\kk}, k)$;
\item[(c)] $([c,d]_{\kk}, k)$ is an element in $\Sigma(\II_{\itLamb})\times\kk$ by (a) and we have
\[ \spC\bigg(d, (\scrA^p)\int_{\II_{\itLamb}} f \dd\mu \bigg)
= ([c,d]_{\kk}, k). \]
\end{itemize}
Therefore, we obtain that
\[ \spC^{\natural}\circled\spC \bigg(d, (\scrA^p)\int_{\II_{\itLamb}} f \dd\mu \bigg)
=  \spC^{\natural}([c,d]_{\kk}, k)
=  (\II_{\kk}, k) = \spC^{\natural}(S,k),\]
We finish the proof of (2).

Due to (a), elements in $\{\II_{\itLamb}\}\times\kk$ can be viewed as elements in $\Sigma(\II_{\itLamb})\times\kk$, then based on the above proof, we get $\spC^{\natural} \circled \spC$ is surjection, we complete the proof of (1).
\end{proof}

\begin{example} \rm
Take $\itLamb=\RR$ and $\scrA^p_{\itLamb}=\scrA^1_{\RR}$ satisfying L-condition (assume $\II=[c,d]$ and $\xi=\frac{1}{2}$ in this case).
We provide an instance for Lemma \ref{lemm:spC surj}.
Consider Lebesgue integral of $f\in\Widehat{\bfS_{\mathrm{id}_{\RR}}([c,d])}$,
then for any $(t, (\scrA^1)\int_{[c,t]} f\dd\mu) \in \frakS_{c, f} \subseteq \frakS$,
we have \[ \spC^{\natural}\circled\spC\bigg(t, (\scrA^1)\int_{[c,t]} f\dd\mu\bigg)
= \spC^{\natural}\bigg([c,t], (\mathrm{L})\int_c^t f\dd\mu\bigg)
= \bigg([c,d], (\mathrm{L})\int_c^t f\dd\mu\bigg). \]
Correspondingly, three pairs as above provide an equation as follows,
\[ (\scrA^1)\int_{[c,t]} f\dd\mu = (\mathrm{L})\int_c^t f\dd\mu = (\mathrm{L})\int_c^d \id_{[c,t]}f \dd\mu. \]
The middle integral can be viewed as an integral with variable upper limit.
\end{example}

\begin{lemma} \label{lemm:spC inj}
The map $\spC$ is an injection.
\end{lemma}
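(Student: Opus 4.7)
The plan is to chase definitions directly, since the only non-trivial content of the statement is that the coordinates of the image $\spC(x)$ recover the coordinates of the input $x$. Fix two elements $x_1, x_2 \in \frakS$ with $\spC(x_1) = \spC(x_2)$. By construction of $\frakS$ as the union $\bigcup_{\alpha, f} \frakS_{\alpha, f}$, we may write $x_i = (t_i, k_i) \in \frakS_{\alpha_i, f_i}$ for some $\alpha_i \in [c,d]_{\kk}$ and $f_i \in \w{\bfS_{\tau}(\II_{\itLamb})}$, where by definition of $\frakS_{\alpha_i, f_i}$ the scalar $k_i$ equals $(\scrA^p)\int_{[\alpha_i,t_i]_{\itLamb}} f_i\, \dd\mu = \w{T}_{\alpha_i}^{t_i}(f_i)$.

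Applying formula (\ref{mapsto:spC}) gives $\spC(x_i) = ([\alpha_i, t_i]_{\kk}, \w{T}_{\alpha_i}^{t_i}(f_i)) = ([\alpha_i, t_i]_{\kk}, k_i)$ in $\Sigma(\II_{\itLamb}) \times \kk$. Comparing first coordinates yields $[\alpha_1, t_1]_{\kk} = [\alpha_2, t_2]_{\kk}$; since $\II$ is totally ordered, any such interval is determined by its endpoints, hence $\alpha_1 = \alpha_2$ and $t_1 = t_2$ (the degenerate case $t_i = \alpha_i$ is handled identically as it forces $\alpha_1 = t_1 = \alpha_2 = t_2$). Comparing second coordinates yields $k_1 = k_2$. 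Therefore $x_1 = (t_1, k_1) = (t_2, k_2) = x_2$, which is the desired injectivity.

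The main conceptual point is that the output of $\spC$ encodes precisely the data $(\alpha, t, k)$ that constitute an element of $\frakS$ once its representation in some $\frakS_{\alpha,f}$ is fixed: the first coordinate $[\alpha, t]_{\kk}$ recovers $\alpha$ and $t$, and the second coordinate is literally $k$. Notice that the choice of $f$ is not used in the argument, which is consistent with the codomain being $\Sigma(\II_{\itLamb}) \times \kk$ and not the larger $\w{T}_{\widetilde{\spI}}(\w{\bfS_{\tau}(\II_{\itLamb})})$; in other words, two different representing functions $f, f'$ that produce the same pair $(t, k)$ give rise to the same element of $\frakS$, and $\spC$ sends this single element to a single output. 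Thus no deeper analytic property of $\w{T}_{\alpha}^{t}$ (for instance, that the integral-valued function $t \mapsto \w{T}_{\alpha}^{t}(f)$ determines $f$ almost everywhere) is required, and the proof is essentially a bookkeeping check.
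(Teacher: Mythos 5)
Your proof is correct and follows essentially the same route as the paper's: unwind the definition of $\spC$ via (\ref{mapsto:spC}), compare the interval coordinate to recover $\alpha$ and $t$, and compare the scalar coordinate to recover the integral value. Your closing remark that the representing function $f$ is neither recovered nor needs to be is consistent with the paper's own conclusion, which likewise only deduces equality of the endpoints and of the integral values.
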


\begin{proof}
Let $(s, (\scrA^p)\int_{[\alpha,s]_{\itLamb}} f\dd\mu ) \in \frakS_{\alpha,f}$
and $(t, (\scrA^p)\int_{[\beta, t]_{\itLamb}} g\dd\mu ) \in \frakS_{\beta,g}$
be two elements in $\frakS$, such that
\[
\spC\bigg(s, (\scrA^p)\displaystyle\int_{[\alpha,s]_{\itLamb}} f\dd\mu \bigg)
= \spC\bigg(t, (\scrA^p)\displaystyle\int_{[\beta, t]_{\itLamb}} g\dd\mu \bigg).
\]
By the definition of $\spC$, we obtain the following equation
\[ ([\alpha,s]_{\kk}, \w{T}_{\alpha}^{s}(f)) = ([\beta,t]_{\kk}, \w{T}_{\beta}^{t}(g)) \]
in $\Sigma(\II_{\itLamb})\times\kk$.
Thus, $[\alpha,s]_{\kk}=[\beta,t]_{\kk}$ and $\w{T}_{\alpha}^{s}(f)=\w{T}_{\beta}^{t}(g)$.
It follows that $\alpha=\beta$, $s=t$, and
\[
(\scrA^p)\int_{[\alpha,s]_{\itLamb}} f \dd\mu
= \w{T}_{\alpha}^{s}(f)
= \w{T}_{\beta}^{t}(g)
= (\scrA^p)\int_{[\beta, t]_{\itLamb}} g \dd\mu  \]

\end{proof}

The following lemma is parallel to Lemma \ref{lemm:im(+) linear}.

\begin{lemma} \label{lemm:spC linear}
$\im(\spC)$ is a $\kk$-linear subspace of $\im(+)$.
\end{lemma}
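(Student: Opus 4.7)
The plan is to verify three things in turn: that $\im(\spC)\subseteq\im(+)$, that $\im(\spC)$ is closed under the addition defined by (\ref{formula:addi-map}), and that it is closed under the $\kk$-action $k\cdot(S,r)=(S,kr)$ from Lemma \ref{lemm:im(+) linear}. Together these imply the conclusion. For the containment, an arbitrary element of $\im(\spC)$ has the form $([\alpha,t]_{\kk}, \w{T}_{\alpha}^{t}(f))$, and I would exhibit it as the sum $([\alpha,t]_{\kk}, \w{T}_{\alpha}^{t}(f))+([\alpha,t]_{\kk}, \w{T}_{\alpha}^{t}(0))$ in $\im(+)$; the $\diamo$-case (equivalently the $\heart$-case) of formula (\ref{formula:addi}), applied with $[u,v]_{\kk}=[s,t]_{\kk}=[\alpha,t]_{\kk}$ and $g=0$, collapses the sum back to $([\alpha,t]_{\kk}, \w{T}_{\alpha}^{t}(f))$, placing the element inside $\im(+)$.

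For closure under addition, I would take two arbitrary elements of $\im(\spC)$, say
\[
\spC\bigg(t_i, (\scrA^p)\int_{[\alpha_i,t_i]_{\itLamb}} f_i\,\dd\mu\bigg) = ([\alpha_i,t_i]_{\kk}, \w{T}_{\alpha_i}^{t_i}(f_i)), \quad i=1,2,
\]
and apply the defining formula (\ref{formula:addi T}): their sum is
\[
(U, \w{T}_{y_1}^{y_2}(\id_{[\alpha_1,t_1]_{\itLamb}} f_1 + \id_{[\alpha_2,t_2]_{\itLamb}} f_2)),
\]
where $y_1=\min\{\alpha_1,\alpha_2\}$, $y_2=\max\{t_1,t_2\}$, and $U=[y_1,y_2]_{\kk}$. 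Setting $h:=\id_{[\alpha_1,t_1]_{\itLamb}} f_1 + \id_{[\alpha_2,t_2]_{\itLamb}} f_2$, which is an element of $\w{\bfS_{\tau}(\II_{\itLamb})}$ by the Banach $\itLamb$-module structure recorded in Example \ref{completion}(4), this sum equals $\spC(y_2,(\scrA^p)\int_{[y_1,y_2]_{\itLamb}} h\,\dd\mu)\in\im(\spC)$.

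For closure under scalar multiplication, given $k\in\kk$ and $([\alpha,t]_{\kk},\w{T}_{\alpha}^{t}(f))\in\im(\spC)$, the action produces $([\alpha,t]_{\kk},k\w{T}_{\alpha}^{t}(f))$. By the $\kk$-linearity of $\w{T}_{\alpha}^{t}$ recorded in the second bullet of Theorem \ref{thm:LLHZpre-2}(1), this equals $([\alpha,t]_{\kk},\w{T}_{\alpha}^{t}(kf))=\spC(t,(\scrA^p)\int_{[\alpha,t]_{\itLamb}} kf\,\dd\mu)\in\im(\spC)$, as $kf$ again lies in $\w{\bfS_{\tau}(\II_{\itLamb})}$.

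The only point requiring real care throughout is ensuring at each step that the witnessing function ($0$, $h$, or $kf$) lies in $\w{\bfS_{\tau}(\II_{\itLamb})}$ so that the expression it produces is genuinely in the image of $\spC$; this is immediate from the Banach $\itLamb$-module structure of $\w{\bfS_{\tau}(\II_{\itLamb})}$. Once that closure is noted, the three verifications proceed in parallel with the proof of Lemma \ref{lemm:im(+) linear}, and no new case analysis of formula (\ref{formula:addi}) beyond the equal-interval case (handled by Lemma \ref{lemm:spcase of addi}(3)) and the full six-case sum (\ref{formula:addi T}) is required.
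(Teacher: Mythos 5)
Your proof is correct and follows the same essential route as the paper's: the containment $\im(\spC)\subseteq\im(+)$ is obtained by the same padding-with-zero trick, so that $([\alpha,t]_{\kk},\w{T}_{\alpha}^{t}(f))$ appears as a value of the addition (\ref{formula:addi T}), and the scalar step rests on the same fact that $\w{T}_{\alpha}^{t}$ is a $\itLamb$- (hence $\kk$-) homomorphism, giving $k\w{T}_{\alpha}^{t}(f)=\w{T}_{\alpha}^{t}(kf)$. The one place you go beyond the paper is the additive-closure step: the paper only verifies that $k_1\spC(\tilde f)+k_2\spC(\tilde g)$ lands in $\im(+)$, whereas you further identify the resulting pair $(U,\w{T}_{y_1}^{y_2}(h))$ as $\spC\big(y_2,(\scrA^p)\int_{[y_1,y_2]_{\itLamb}}h\,\dd\mu\big)$, hence as an element of $\im(\spC)$ itself. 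That is the stronger closure the word ``subspace'' really asks for, and it goes through because $y_1=\min\{\alpha_1,\alpha_2\}$ stays in $(c,d)_{\kk}$ and $h$ stays in $\w{\bfS_{\tau}(\II_{\itLamb})}$. The only soft spot is your appeal to the ``Banach $\itLamb$-module structure'' to place $h$ (and $\id_{[\alpha,t]_{\itLamb}}f$) in $\w{\bfS_{\tau}(\II_{\itLamb})}$: the module action is multiplication by the scalar $\tau(a)$, not by indicator functions, so what you actually need is closure of $\w{\bfS_{\tau}(\II_{\itLamb})}$ under sums and under multiplication by $\id_{[u,v]_{\itLamb}}$ --- a fact the paper itself uses silently when defining $+$, so this is a citation slip rather than a gap.
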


\begin{proof}
Recall that $\im(+)$ is a $\kk$-linear space, see Lemma \ref{lemm:im(+) linear}, any element of $\im(+)$ is of the form
\[( [y_1,y_2]_{\itLamb}, \ \w{T}_{y_1}^{y_2}
(\id_{[u,v]_{\itLamb}}f+\id_{[s,t]_{\itLamb}}g) \dd\mu ), \]
where $y_1=\min\{u,v,s,t\}$ and $y_2=\max\{u,v,s,t\}$,
then for any $(t, (\scrA^p)\int_{[\alpha,t]_{\itLamb}} f\dd\mu )\in \frakS$, we have
\begin{align*}
\spC\bigg(t, (\scrA^p)\displaystyle\int_{[\alpha,t]_{\itLamb}} f\dd\mu \bigg)
= ([\alpha,t]_{\kk}, \w{T}_{\alpha}^{t}(f))
= ([\alpha,t]_{\kk}, \w{T}_{\alpha}^{t}
(\id_{[\alpha,t]_{\itLamb}}f + \id_{[\alpha,t]_{\itLamb}} 0) ),
\end{align*}
where $u=s=\alpha$, $v=t$, $\min\{u,v,s,t\}=\alpha$, $\max\{u,v,s,t\}=t$, and $U=[\alpha,t]_{\kk}$.
Thus, $\im(\spC)\subseteq \im(+)$.

Assume $k_1,k_2\in\kk$ and  $(s, (\scrA^p)\int_{[\alpha,s]_{\itLamb}} f\dd\mu )$, $(t, (\scrA^p)\int_{[\beta,t]_{\itLamb}} g\dd\mu )$ $\in \frakS$,
we have
\begin{align*}
&  k_1 \spC\bigg(s, (\scrA^p)\int_{[\alpha,s]_{\itLamb}} f\dd\mu \bigg)
+ k_2 \spC\bigg(t, (\scrA^p)\int_{[\beta, t]_{\itLamb}} g\dd\mu \bigg) \\
\mathop{=}\limits^{\spadesuit}\ &
k_1([\alpha,s]_{\kk}, \w{T}_{\alpha}^{s}(f))
+ k_2([\beta, t]_{\kk}, \w{T}_{\beta }^{t}(g)) \\
\mathop{=}\limits^{\clubsuit}\ &
([\alpha,s]_{\kk}, k_1\w{T}_{\alpha}^{s}(f))
+ ([\beta, t]_{\kk}, k_2\w{T}_{\beta }^{t}(g)) \\
\mathop{=}\limits^{\heart}\ &
([\alpha,s]_{\kk}, \w{T}_{\alpha}^{s}(k_1f))
+ ([\beta, t]_{\kk}, \w{T}_{\beta }^{t}(k_2g)) \\
\mathop{=}\limits^{\diamo}\ &
([y_1,y_2]_{\kk}, \
\w{T}_{y_1}^{y_2}
(k_1\id_{[\alpha,s]_{\itLamb}} f + k_2 \id_{[\beta, t]_{\itLamb}} g)
) \\
\in\ & \im(+),
\end{align*}
where:
\begin{itemize}
\item[(a)] $y_1=\min\{\alpha,\beta,s,t\}$, $y_2=\max\{\alpha,\beta,s,t\}$;
\item[(b)] $\spadesuit$ is given by (\ref{mapsto:spC});
\item[(c)] $\clubsuit$ is given by the $\kk$-action in Lemma \ref{lemm:im(+) linear};
\item[(d)] $\heart$ holds since $\w{T}_{\alpha}^{s}$ and $\w{T}_{\beta}^{t}$ are $\itLamb$-homomorphisms in
the categories $(\scrA^p_{\itLamb}, [\alpha,s]_{\kk},$ $\kappa_{\alpha}, \kappa_{s}, \mu)$
and $(\scrA^p_{\itLamb}, [\beta,t]_{\kk}, \kappa_{\beta}, \kappa_{t}, \mu)$, respectively
(see Theorem \ref{thm:LLHZpre-2});
\item[(e)] and $\diamo$ is given by the definition of the addition $+$ in (\ref{formula:addi-map}).
\end{itemize}
Thus, $\im(\spC)$ is a $\kk$-linear subspace of $\im(+)$.
\end{proof}

The following lemma is parallel to Lemma \ref{lemm:im(+) mod pre}.

\begin{lemma} \label{lemm:spC mod pre}
For any $a\in\itLamb$ and $(t, (\scrA^p)\int_{[\alpha,t]_{\itLamb}} f\dd\mu )\in \frakS$, we have
\[ \tau(a)\spC\bigg(t, (\scrA^p)\int_{[\alpha,t]_{\itLamb}} f\dd\mu \bigg) \in \im(\spC). \]
\end{lemma}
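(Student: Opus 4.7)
The plan is to deduce this directly from Lemma~\ref{lemm:spC linear}, mirroring how Lemma~\ref{lemm:im(+) mod pre} is derived from Lemma~\ref{lemm:im(+) linear}. Since $\tau:\itLamb\to\kk$ is a homomorphism of $\kk$-algebras, $\tau(a)$ is an element of $\kk$. By Lemma~\ref{lemm:spC linear}, $\im(\spC)$ is a $\kk$-linear subspace of $\im(+)$, hence closed under scalar multiplication by any element of $\kk$. Applied to the element $\spC(t,(\scrA^p)\int_{[\alpha,t]_{\itLamb}}f\dd\mu)$ of $\im(\spC)$ and to the scalar $\tau(a)\in\kk$, this immediately yields membership in $\im(\spC)$.

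To make this concrete (and to show the identification explicitly, rather than merely invoking closure), I would exhibit the preimage under $\spC$. By the definition~(\ref{mapsto:spC}),
\[
\spC\bigg(t,(\scrA^p)\int_{[\alpha,t]_{\itLamb}}f\dd\mu\bigg)=\big([\alpha,t]_{\kk},\w{T}_{\alpha}^{t}(f)\big),
\]
so applying the $\kk$-action on $\im(+)$ from Lemma~\ref{lemm:im(+) linear} gives
\[
\tau(a)\spC\bigg(t,(\scrA^p)\int_{[\alpha,t]_{\itLamb}}f\dd\mu\bigg)=\big([\alpha,t]_{\kk},\tau(a)\w{T}_{\alpha}^{t}(f)\big).
\]
Since $\w{T}_{\alpha}^{t}$ is a morphism in $\scrA^p_{\itLamb}$ into the object $(\kk,\mu([\alpha,t]_{\itLamb}),m)$ whose $\itLamb$-action on $\kk$ is $a\cdot k=\tau(a)k$, and since the $\itLamb$-action on $\w{\bfS_{\tau}(\II_{\itLamb})}$ is $af=\tau(a)f$, the $\itLamb$-homomorphism property of $\w{T}_{\alpha}^{t}$ yields
\[
\tau(a)\w{T}_{\alpha}^{t}(f)=\w{T}_{\alpha}^{t}(\tau(a)f).
\]
Since $\tau(a)f\in\w{\bfS_{\tau}(\II_{\itLamb})}$, the pair $(t,(\scrA^p)\int_{[\alpha,t]_{\itLamb}}\tau(a)f\dd\mu)$ belongs to $\frakS_{\alpha,\tau(a)f}\subseteq\frakS$, and
\[
\spC\bigg(t,(\scrA^p)\int_{[\alpha,t]_{\itLamb}}\tau(a)f\dd\mu\bigg)=\big([\alpha,t]_{\kk},\w{T}_{\alpha}^{t}(\tau(a)f)\big)=\tau(a)\spC\bigg(t,(\scrA^p)\int_{[\alpha,t]_{\itLamb}}f\dd\mu\bigg),
\]
which exhibits the right-hand side as an element of $\im(\spC)$.

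There is really no substantive obstacle here: the only content beyond Lemma~\ref{lemm:spC linear} is the observation that the scalar $\tau(a)$ can be absorbed into the integrand via the $\itLamb$-homomorphism property of $\w{T}_{\alpha}^{t}$ guaranteed by Theorem~\ref{thm:LLHZpre-2}. The only minor thing to verify is the correct identification of the $\itLamb$-actions on source and target of $\w{T}_{\alpha}^{t}$, both being governed by $\tau$, so that ``pulling the scalar inside'' is legal. This sets the stage for Proposition~\ref{prop:spC mod}, where the axioms of a left $\itLamb$-module structure on $\im(\spC)$ will follow by the same mechanism as in Proposition~\ref{prop:im(+) mod}.
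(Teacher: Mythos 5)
Your proposal is correct and its explicit computation is essentially identical to the paper's own proof: apply $\spC$ to get $([\alpha,t]_{\kk},\w{T}_{\alpha}^{t}(f))$, move $\tau(a)$ into the second coordinate, absorb it into the integrand via the $\itLamb$-homomorphism property of $\w{T}_{\alpha}^{t}$, and recognize the result as $\spC\big(t,(\scrA^p)\int_{[\alpha,t]_{\itLamb}}\tau(a)f\dd\mu\big)$. The extra opening remark that closure already follows from Lemma~\ref{lemm:spC linear} is a harmless (and valid) shortcut in the spirit of the paper's proof of Lemma~\ref{lemm:im(+) mod pre}.
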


\begin{proof}
We have
\begin{align*}
& \tau(a)\spC\bigg(t, (\scrA^p)\int_{[\alpha,t]_{\itLamb}} f\dd\mu \bigg) \\
=\ & \tau(a)([\alpha,t]_{\kk},\w{T}_{\alpha}^{t}(f)) \\
=\ & ([\alpha,t]_{\kk},\tau(a)\w{T}_{\alpha}^{t}(f)) \\
=\ & ([\alpha,t]_{\kk},\w{T}_{\alpha}^{t}(\tau(a)f)) \\
=\ & \spC\bigg(t, (\scrA^p)\int_{[\alpha,t]_{\itLamb}} \tau(a)f\dd\mu \bigg)
\in \im(\spC)
\end{align*}
\end{proof}

Then, by Lemmas \ref{lemm:spC linear} and \ref{lemm:spC mod pre}, we obtain the following proposition, which is parallel to Proposition \ref{prop:im(+) mod}.

\begin{proposition} \label{prop:spC mod}
The image $\im(\spC)$ is a left $\itLamb$-submodule of $\im(+)$.
\end{proposition}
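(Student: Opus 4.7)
My plan is to show that $\im(\spC)$ inherits the left $\itLamb$-module structure of $\im(+)$ essentially as a formal consequence of the two preceding lemmas, once the definition of submodule is unpacked. A subset of a left $\itLamb$-module is a left $\itLamb$-submodule precisely when it is a $\kk$-linear subspace and is closed under the $\itLamb$-action; so I would split the argument into these two verifications.

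First, I would invoke Lemma \ref{lemm:spC linear}, which already delivers the first half: $\im(\spC)$ sits inside $\im(+)$ and is closed under $\kk$-linear combinations. In particular, $\im(\spC)$ is stable under addition and under scaling by arbitrary $k \in \kk$, so no further work is required for the $\kk$-linear structure. Next, I would check closure under the $\itLamb$-action on $\im(+)$, recalling from Proposition \ref{prop:im(+) mod} that this action is given by $a \star (S,r) = (S, \tau(a) r)$. Thus for any $a\in\itLamb$ and $\spC(t, (\scrA^p)\int_{[\alpha,t]_{\itLamb}} f\dd\mu) \in \im(\spC)$, we have
\[
a \star \spC\bigg(t, (\scrA^p)\int_{[\alpha,t]_{\itLamb}} f\dd\mu\bigg)
= ([\alpha,t]_{\kk}, \tau(a)\w{T}_{\alpha}^{t}(f)),
\]
which is exactly the element produced in Lemma \ref{lemm:spC mod pre} and shown there to lie in $\im(\spC)$ (using that $\w{T}_{\alpha}^{t}$ is a $\itLamb$-homomorphism, so that $\tau(a)\w{T}_{\alpha}^{t}(f) = \w{T}_{\alpha}^{t}(\tau(a) f) = \w{T}_{\alpha}^{t}(a f)$).

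Combining these two facts closes the argument: $\im(\spC)$ is a $\kk$-linear subspace of $\im(+)$ by Lemma \ref{lemm:spC linear} and is stable under the operator $a\star(-)$ for every $a\in\itLamb$ by Lemma \ref{lemm:spC mod pre}, hence it is a left $\itLamb$-submodule. I do not expect any genuine obstacle here, since both ingredients have been established; the only small subtlety worth stating explicitly is that the $\itLamb$-action one inherits coincides with the one described, which follows because the image of $\spC$ is literally sitting inside $\im(+)$ via the identity and the action was defined coordinate-wise on the $\kk$-factor of $\Sigma(\II_{\itLamb})\times\kk$.
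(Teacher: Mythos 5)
Your proposal is correct and follows essentially the same route as the paper: the paper's own proof likewise combines Lemma \ref{lemm:spC linear} (to get the $\kk$-linear subspace structure) with Lemma \ref{lemm:spC mod pre} (to get closure under the $\star$-action), and notes that the $\itLamb$-module structure on $\im(\spC)$ is just the restriction of the one on $\im(+)$. Your explicit unpacking of the action $a\star(S,r)=(S,\tau(a)r)$ and the identity $\tau(a)\w{T}_{\alpha}^{t}(f)=\w{T}_{\alpha}^{t}(\tau(a)f)$ matches the computation carried out in Lemma \ref{lemm:spC mod pre}.
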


\begin{proof}
By Lemma \ref{lemm:spC linear}, we know that $\im(\spC)$ is a $\kk$-linear space.
By Lemma \ref{lemm:spC mod pre}, we can check that $\im(\spC)$ is also a left $\itLamb$-module.
Lemma \ref{lemm:spC linear} shows that $\im(\spC)$ is a $\kk$-linear subspace of $\im(+)$. \checkLiu{The $\Lambda$-module structure of $\im(\spC)$ is the restriction of the structrue on $\im(+)$}. Thus, we finish the proof.
\end{proof}

We can define the addition \checkLiu{in} $\{\II_{\itLamb}\} \times \kk$ by $(\II_{\itLamb}, k_1) + (\II_{\itLamb}, k_2) := (\II_{\itLamb}, k_1+k_2)$, and the $\kk$-linear action
$\kk \times (\{\II_{\itLamb}\} \times \kk) \to \{\II_{\itLamb}\} \times \kk$ by
$(k, (\II_{\itLamb}, k_1) ) \mapsto k(\II_{\itLamb}, k_1):=(\II_{\itLamb}, kk_1).$ Then $\{\II_{\itLamb}\}\times\kk$ is a $\kk$-linear space which is $\kk$-\checkLiu{linearly} isomorphic to $\kk$.
Furthermore, $\{\II_{\itLamb}\}\times\kk$ is also a left $\itLamb$-module where the left $\itLamb$-action
$\itLamb \times (\{\II_{\itLamb}\}\times\kk) \to \{\II_{\itLamb}\}\times\kk$ is defined by $(a, (\II_{\itLamb}, k)) \mapsto a\star(\II_{\itLamb}, k) := \tau(a)(\II_{\itLamb}, k)$ ($=(\II_{\itLamb}, \tau(a)k)$).


\begin{theorem} \label{thm:spC Lambda-homo}
The restriction
\[\spC^{\natural}|_{\im(+)}: \im(+) \to \{\II_{\itLamb}\} \times \kk\]
of $\spC^{\natural}$ is a $\itLamb$-epimorphism.
\end{theorem}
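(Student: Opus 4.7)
The plan is to decompose the claim into two parts: (i) $\spC^{\natural}|_{\im(+)}$ is a $\itLamb$-module homomorphism, and (ii) it is surjective. The second part is essentially free from earlier results. By Lemma \ref{lemm:spC surj}(1), the composition $\spC^{\natural}\circled\spC:\frakS\to\{\II_{\itLamb}\}\times\kk$ is already surjective, and Lemma \ref{lemm:spC linear} establishes $\im(\spC)\subseteq\im(+)$. Hence every $(\II_{\itLamb},k)$ has a preimage under $\spC^{\natural}$ that already lives in $\im(+)$, giving epimorphism.

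For the homomorphism property, compatibility with the $\kk$-action and the $\itLamb$-action is essentially bookkeeping: by Lemma \ref{lemm:im(+) linear}, $k\cdot(S,r)=(S,kr)$, and by Proposition \ref{prop:im(+) mod}, $a\star(S,r)=(S,\tau(a)r)$; the analogous definitions on $\{\II_{\itLamb}\}\times\kk$ stated just before the theorem make $\spC^{\natural}:(S,r)\mapsto(\II_{\itLamb},r)$ trivially intertwine both actions. So the substantive point is additivity.

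For additivity, I would take two generators of $\im(+)$, written via (\ref{formula:addi T}) as $x_1=([u,v]_{\kk},\w{T}_u^v(f))$ and $x_2=([s,t]_{\kk},\w{T}_s^t(g))$. The unified formula (\ref{formula:addi T}) gives the second coordinate of $x_1+x_2$ as
\[
r:=\w{T}_{\min}^{\max}\bigl(\id_{[u,v]_{\itLamb}}f+\id_{[s,t]_{\itLamb}}g\bigr),
\]
where the subscripts are $\min\{u,v,s,t\}$ and $\max\{u,v,s,t\}$. Since $\w{T}_{\min}^{\max}$ is a $\itLamb$-homomorphism in its ambient $\scrA^p$ (hence $\kk$-linear) by Theorem \ref{thm:LLHZpre-2}(1), this splits as $\w{T}_{\min}^{\max}(\id_{[u,v]_{\itLamb}}f)+\w{T}_{\min}^{\max}(\id_{[s,t]_{\itLamb}}g)$. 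The key reduction is the localization identity $\w{T}_{\min}^{\max}(\id_{[u,v]_{\itLamb}}f)=\w{T}_u^v(f)=r_1$ (and analogously for $g$), after which $r=r_1+r_2$ and $\spC^{\natural}(x_1+x_2)=(\II_{\itLamb},r_1+r_2)=\spC^{\natural}(x_1)+\spC^{\natural}(x_2)$ as required. Although (\ref{formula:addi}) lists six case-variants, each one rewrites to the single unified expression in (\ref{formula:addi T}) via elementary identities between indicator functions (for example in the overlapping case $u\preceq s\prec v\preceq t$, one uses $\id_{[u,s]}+\id_{[s,v]}=\id_{[u,v]}$ and $\id_{[s,v]}+\id_{[v,t]}=\id_{[s,t]}$), so no extra work is needed per case.

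The main anticipated obstacle is justifying the localization identity $\w{T}_{\min}^{\max}(\id_I f)=\w{T}_{\text{left}(I)}^{\text{right}(I)}(f)$ inside the abstract $\scrA^p$ framework, since the theorem is categorical and cannot appeal to pointwise integral computations. My approach would be to invoke the universal property in Theorem \ref{thm:LLHZpre-1}: both sides are $\itLamb$-homomorphisms out of the initial object, agree on $\id$ by construction of the measure, and can then be identified on all of $\w{\bfS_{\tau}(\II_{\itLamb})}$ by density and continuity of the completion. Once that reduction is secured, the rest of the argument is formal.
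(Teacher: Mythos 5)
Your proposal follows essentially the same route as the paper's proof: check compatibility with the $\kk$- and $\itLamb$-actions (immediate from the definitions of the actions on $\im(+)$ and on $\{\II_{\itLamb}\}\times\kk$), prove additivity, and conclude surjectivity. The only divergences are minor: the paper gets surjectivity from $\dim_{\kk}(\{\II_{\itLamb}\}\times\kk)=1$ rather than from Lemma \ref{lemm:spC surj} together with $\im(\spC)\subseteq\im(+)$, and at the additivity step it simply asserts $(S_1,r_1)+(S_2,r_2)=(S,r_1+r_2)$ without spelling out the linearity of $\w{T}_{\min}^{\max}$ and the localization identity $\w{T}_{\min}^{\max}(\id_{[u,v]_{\itLamb}}f)=\w{T}_{u}^{v}(f)$ that you correctly isolate as the substantive content — so your version is, if anything, more complete at the one point the paper glosses over.
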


\begin{proof}
First of all, by Proposition \ref{prop:spC mod}, we have known that $\im(+)$ is a left $\itLamb$-submodule of $\im(\spC)$.
Now we show that $\spC^{\natural}|_{\im(+)}$ is \checkLiu{surjective and $\itLamb$-linear}.

For any $a\in \itLamb$ and $(S,r)\in\im(+)$, we have
\begin{align*}
\spC^{\natural}|_{\im(+)}(a\star(S,r))
=\ &  \spC^{\natural}|_{\im(+)}(S,\tau(a)r) = (\II_{\itLamb}, \tau(a)r) \\
=\ & a\star(\II_{\itLamb}, r) = a\star\spC^{\natural}|_{\im(+)}(S,r).
\end{align*}
Since $k=k\tau(1)=\tau(k1)$, it follows that
\begin{center}
$\spC^{\natural}|_{\im(+)}(k(S,r)) = k\spC^{\natural}|_{\im(+)}(S,r)$
\end{center}
where $k\in \kk$ and $1$ is the identity element of $\itLamb$.

On the other hand, for any two elements $(S_1,r_1), (S_2,r_2)\in\im(+)$
(we assume $S_1=[u,v]_{\kk}$ and $S_2=[s,t]_{\kk}$), we have
\begin{align*}
& \spC^{\natural}|_{\im(+)}((S_1,r_1)+(S_2,r_2)) \\
=\ & \spC^{\natural}|_{\im(+)}({S}, r_1+r_2) \\
=\ & (\II_{\itLamb}, r_1+r_2) \\
=\ & (\II_{\itLamb},r_1) + (\II_{\itLamb},r_2) \\
=\ & \spC^{\natural}|_{\im(+)}(S_1,r_1) + \spC^{\natural}|_{\im(+)}(S_2,r_2)
\end{align*}
where ${S} = [\min\{u,v,s,t\}, \max\{u,v,s,t\}]_{\kk}$.
Therefore, $\spC^{\natural}|_{\im(+)}$ is a $\itLamb$-homomorphism.

Since $\dim_{\kk}(\{\II_{\itLamb}\} \times \kk) = 1$, we find that $\spC^{\natural}|_{\im(+)}$ is a $\itLamb$-epimorphism.
\end{proof}

\subsection{Special case: The map $\spC$ in the \checkLiu{case} of $\scrA^1$}
\label{subsect:spC-special}

We assume that $\itLamb=\RR$ and $\scrA^p$ satisfy the L-condition, that is, $\scrA^p=(\scrA^1_{\RR}, \II=[c,d], \kappa_c, \kappa_d, \mu)$ satisfies (L1)--(L6),
then we have $\widetilde{\spI}=\{[t_1,t_2]\mid c\le t_1\le t_2\le d\}$.
In this case, for any $f\in\w{\bfS_{\tau}(\II_{\itLamb})} = \Widehat{\bfS_{\mathrm{id}_{\RR}}([c,d])}$ and $\alpha\in(c,d)$,
let $\spI=\{[\alpha,t]\mid \alpha\le t\le d\}$, we have
\[\w{T}_{\spI}(f) = \{ ([\alpha,t], \w{T}_{\alpha}^{t}(f)) \in \Sigma([c,d])\times\RR \mid \alpha\le t\le d \}.\]

\subsubsection{\sectcolor Restriction $\spC_{\alpha}$ of $\spC$}

\begin{proposition} \label{prop:spC-L}
If $\scrA^1$ satisfies the L-condition, then for any $f\in\Widehat{\bfS_{\mathrm{id}_{\RR}}([c,d])}$ and $\alpha \in (c,d)$, there is a bijection
\begin{align}
  \spC_{\alpha,f}: \frakS_{\alpha,f} =
  \bigg\{ \bigg(t, (\mathrm{L})\int_{\alpha}^{t} f\dd\mu \bigg) \in \RR\times \RR
  \ \bigg|\ \alpha\le t\le d \bigg\}
  \longrightarrow \w{T}_{\spI}(f)
\nonumber
\end{align}
\checkLiu{which sends each pair $(t, (\mathrm{L})\int_{\alpha}^{t} f\dd\mu )$ to the pair $([\alpha, t], \w{T}_{\alpha}^{t}(f))$}.
\end{proposition}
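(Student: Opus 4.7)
The plan is to observe that under the L-condition, both $\frakS_{\alpha,f}$ and $\w{T}_{\spI}(f)$ are naturally parametrized by the variable $t \in [\alpha,d]$, so the result reduces to checking that the assignment $t \mapsto t$ on parameters induces a bijection after the second coordinates are identified via Theorem~\ref{thm:LLHZpre-2}(2). First I would fix notation: under (L1)--(L6) we have $\itLamb=\kk=\RR$, $\tau=\mathrm{id}_{\RR}$, and for each pair $\alpha\le t\le d$ the subcategory $(\scrA^1_{\RR}, [\alpha,t], \kappa_{\alpha}, \kappa_{t}, \mu)$ again satisfies the L-condition. Hence by Theorem~\ref{thm:LLHZpre-2}(2) applied to this subinterval, the unique morphism $\w{T}_{\alpha}^{t}=\w{T}_{(\RR,\mu([\alpha,t]),m)}$ sends each $f\in\Widehat{\bfS_{\mathrm{id}_{\RR}}([c,d])}$ (restricted to $[\alpha,t]$) to its Lebesgue integral $(\mathrm{L})\int_{\alpha}^{t} f\dd\mu$.

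Next I would verify that $\spC_{\alpha,f}$ is well-defined as a map $\frakS_{\alpha,f}\to \w{T}_{\spI}(f)$. By \ref{subsubsect:3.3.1}, $\spC$ sends $(t, (\scrA^1)\int_{[\alpha,t]} f\dd\mu)$ to $([\alpha,t],\w{T}_{\alpha}^{t}(f))$, which under the L-condition equals $([\alpha,t],(\mathrm{L})\int_{\alpha}^{t}f\dd\mu)$; this is visibly an element of $\w{T}_{\spI}(f)$ since $[\alpha,t]\in \spI=\{[\alpha,t]\mid\alpha\le t\le d\}$.

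For injectivity, I would invoke Lemma~\ref{lemm:spC inj} directly, since $\spC_{\alpha,f}$ is a restriction of $\spC$; alternatively it is immediate from the fact that distinct values of $t\in[\alpha,d]$ give distinct first coordinates $[\alpha,t]$. For surjectivity, given an arbitrary element $([\alpha,t],\w{T}_{\alpha}^{t}(f))\in \w{T}_{\spI}(f)$, the first coordinate identifies the parameter $t\in[\alpha,d]$ uniquely, and then the pair $(t,(\mathrm{L})\int_{\alpha}^{t}f\dd\mu)$ lies in $\frakS_{\alpha,f}$ by construction and maps to it under $\spC_{\alpha,f}$; this uses again that $\w{T}_{\alpha}^{t}(f)$ coincides with the Lebesgue integral, so the second coordinate matches exactly.

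I do not expect any serious obstacle: the proposition is essentially a dictionary between two descriptions of the same parametrized family. The only non-trivial ingredient is the identification of $\w{T}_{\alpha}^{t}(f)$ with $(\mathrm{L})\int_{\alpha}^{t} f\dd\mu$, which is supplied by Theorem~\ref{thm:LLHZpre-2}(2); once that is in hand the bijectivity is a direct consequence of the parametrization $t\leftrightarrow [\alpha,t]$.
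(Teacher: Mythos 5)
Your proposal is correct and follows essentially the same route as the paper: surjectivity from the explicit parametrization $t\leftrightarrow[\alpha,t]$ via the defining formula (\ref{mapsto:spC}), and injectivity by restricting Lemma \ref{lemm:spC inj}. The only difference is that you make explicit the identification $\w{T}_{\alpha}^{t}(f)=(\mathrm{L})\int_{\alpha}^{t}f\dd\mu$ supplied by Theorem \ref{thm:LLHZpre-2}(2), which the paper's proof uses implicitly; this is a small but welcome clarification rather than a different argument.
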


\begin{proof}

Notice that we have
\[ \w{T}_{\spI}(f) = \{ ([\alpha,t], \w{T}_{\alpha}^{t}(f)) \in \Sigma([c,d])\times\RR \mid \alpha\le t\le d \} \subseteq \Sigma([c,d])\times\RR, \]
then for all $([\alpha,t],\w{T}_{\alpha}^t(f)) \in \w{T}_{\spI}(f)$, we have
\[\spC_{\alpha,f}\bigg(t, (\mathrm{L})\int_{\alpha}^{t} f\dd\mu \bigg)
= \spC_{\alpha}|_{\spC_{\alpha,f}} \bigg(t, (\mathrm{L})\int_{\alpha}^{t} f\dd\mu \bigg)
= ([c,d], \w{T}_{\alpha}^{t}(f)) \]
by (\ref{mapsto:spC}).
Thus, $\spC_{\alpha,f}$ is a surjection.

On the other hand, by Lemma \ref{lemm:spC inj}, we know that $\spC$ is injective.
\checkLiu{Since $\spC_{\alpha,f} = \spC|_{\frakS_{\alpha,f}}$ is a retraction of $\spC$}, we have that $\spC_{\alpha,f}$ is a bijection.
\end{proof}

We use the symbol $\biguplus$ to represent a disjoint union. The bijection $\spC_{\alpha,f}$ induces a map
\[  \spC_{\alpha}:
  \frakS_{\alpha}
:= \checkLiu{\biguplus_{f\in\Widehat{\bfS_{\mathrm{id}_{\RR}}([c,d])}}} \frakS_{\alpha,f}
\ -\!\!\!\to\
\w{T}_{\spI}
:= \checkLiu{\biguplus_{f\in\Widehat{\bfS_{\mathrm{id}_{\RR}}([c,d])}}} \w{T}_{\spI}(f)
\]
sending each $(t, (\mathrm{L})\int_{\alpha}^{t} f\dd\mu)$ in $\frakS_{\alpha,f}$ to $([\alpha,t], \w{T}_{\alpha}^{t}(f))$ in $\spC_{\alpha,f}$. Note that the two unions are disjoint unions.
\checkLiu{Hence} $\spC_{\alpha}$ is also a bijection by Proposition \ref{prop:spC-L}.

\subsubsection{\sectcolor Surjection $\spC_{\alpha}^{\natural}\circ\spC_{\alpha}$}
Recall that $\spC^{\natural}\circled\spC$ is a surjection, see Lemma \ref{lemm:spC surj},
and it is straightforward to see that $\spC^{\natural}_{\alpha} \circled \spC_{\alpha}$ is also a surjection,  where $\spC^{\natural}_{\alpha}$ is a restriction of $\spC^{\natural}$ defined as
\[ \spC^{\natural}_{\alpha} := \spC^{\natural}|_{\im(\spC_{\alpha})}:
\im(\spC_{\alpha}) \to \{[c,d]\} \times \RR. \]
We can obtain some properties of $\spC_{\alpha}^{\natural}$ by $\spC_{\alpha}^{\natural}\circ\spC_{\alpha}$.

\begin{lemma} \label{lemm:comm diagram}
The following diagrams
\[\xymatrix{
\frakS_{\alpha} \ar[r]^{\spC_{\alpha}}  \ar[d]_{\spC^{\natural}|_{\frakS_{\alpha}}}
& \w{T}_{\spI} \ar[d]^{\spC^{\natural}|_{\w{T}_{\spI}}} \\
\{[c,d]\}\times\RR \ar[r]_{\mathrm{id}}
& \{[c,d]\}\times\RR
}
\]
and
\[\hspace{-16mm}
\xymatrix@R=2cm@C=2cm{
\frakS_{\alpha}\times \frakS_{\alpha}
\ar[r]^{+ \circled \left(\begin{smallmatrix}
\spC|_{\frakS_{\alpha}} & 0 \\
0 & \spC|_{\frakS_{\alpha}}
\end{smallmatrix}\right)}
\ar[d]_{\left(\begin{smallmatrix}
\spC^{\natural}|_{\frakS_{\alpha}} & 0 \\
0 & \spC^{\natural}|_{\frakS_{\alpha}}
\end{smallmatrix}\right)}
& \frakS_{\alpha} \ar[d]^{\spC^{\natural}|_{\frakS_{\alpha}}} \\
(\{[c,d]\}\times\RR) \times (\{[c,d]\}\times\RR) \ar[r]_{+}
& \{[c,d]\}\times\RR
}
\]
commute.
\end{lemma}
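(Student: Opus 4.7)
My plan is to chase a generic element around each square, using the L-condition identification $\w{T}_\alpha^t(f) = (\mathrm{L})\int_\alpha^t f\,\dd\mu$ from Theorem \ref{thm:LLHZpre-2}(2) together with the definitions of $\spC_\alpha$, $\spC^\natural$, and the addition map (\ref{formula:addi-map}). As a preliminary remark, under the L-condition one has $\itLamb = \RR$ and $\II_{\itLamb} = [c,d]$, so $\spC^\natural$ sends any pair $(S,k) \in \Sigma([c,d]) \times \RR$ to $([c,d], k)$; an element $(t,k) \in \frakS_\alpha \subseteq \RR \times \RR$ will be viewed inside $\Sigma([c,d]) \times \RR$ via the singleton embedding $t \mapsto \{t\}$.

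For the first diagram, I would pick a typical element $(t, (\mathrm{L})\int_\alpha^t f\,\dd\mu) \in \frakS_\alpha$. Tracing it right then down, (\ref{mapsto:spC}) gives $\spC_\alpha(t, (\mathrm{L})\int_\alpha^t f\,\dd\mu) = ([\alpha, t], \w{T}_\alpha^t(f))$, and then $\spC^\natural$ replaces the first coordinate by $[c,d]$. Tracing it down then right produces $([c,d], (\mathrm{L})\int_\alpha^t f\,\dd\mu)$. Theorem \ref{thm:LLHZpre-2}(2) identifies $\w{T}_\alpha^t(f)$ with $(\mathrm{L})\int_\alpha^t f\,\dd\mu$, so the two paths will coincide.

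For the second diagram, I would pick $(s, (\mathrm{L})\int_\alpha^s f\,\dd\mu)$ and $(t, (\mathrm{L})\int_\alpha^t g\,\dd\mu)$ in $\frakS_\alpha$. Because both intervals $[\alpha,s]$ and $[\alpha,t]$ share the left endpoint $\alpha$, only the cases $\diamo$ and $\heart$ of (\ref{formula:addi}) can occur; assuming without loss of generality $s \le t$ puts us in case $\diamo$. Going right and then down will then yield
\[
([c,d], \w{T}_\alpha^t(g + \id_{[\alpha,s]_{\itLamb}} f)),
\]
whereas going down and then right will yield
\[
([c,d], (\mathrm{L})\int_\alpha^s f\,\dd\mu + (\mathrm{L})\int_\alpha^t g\,\dd\mu).
\]
Equality of the second coordinates will follow from Theorem \ref{thm:LLHZpre-2}(2): since $\w{T}_\alpha^t$ is $\kk$-linear and realizes the Lebesgue integral,
\[
\w{T}_\alpha^t(g + \id_{[\alpha,s]_{\itLamb}} f) = (\mathrm{L})\int_\alpha^t g\,\dd\mu + (\mathrm{L})\int_\alpha^t \id_{[\alpha,s]_{\itLamb}} f\,\dd\mu = (\mathrm{L})\int_\alpha^t g\,\dd\mu + (\mathrm{L})\int_\alpha^s f\,\dd\mu.
\]
The case $s \ge t$ will be symmetric via case $\heart$.

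I expect the main obstacle to be bookkeeping rather than mathematical substance: one must carefully reconcile the implicit embedding $\frakS_\alpha \hookrightarrow \Sigma([c,d]) \times \RR$ needed to apply $\spC^\natural$ to elements of $\frakS_\alpha$, and enumerate which subcases of (\ref{formula:addi}) survive when the two intervals share a left endpoint. Once these identifications are made explicit, both commutativities reduce to $\kk$-linearity of $\w{T}_\alpha^t$ and the elementary identity $(\mathrm{L})\int_\alpha^t \id_{[\alpha,s]} f\,\dd\mu = (\mathrm{L})\int_\alpha^s f\,\dd\mu$ for $s\le t$, both supplied by Theorem \ref{thm:LLHZpre-2}(2).
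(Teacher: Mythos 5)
Your proposal is correct and takes essentially the same route as the paper's proof: a direct element chase using the identification $\w{T}_\alpha^t(f)=(\mathrm{L})\int_\alpha^t f\,\dd\mu$ for the first square, and for the second square the observation that two intervals sharing the left endpoint $\alpha$ force the nested cases of (\ref{formula:addi}), after which the equality follows from linearity of $\w{T}_\alpha^{\max\{s,t\}}$ together with the restriction identity $\w{T}_\alpha^{t}(\id_{[\alpha,s]}f)=\w{T}_\alpha^{s}(f)$. The only cosmetic difference is that the paper works out the case $s\ge t$ (case $\heart$) while you take $s\le t$ (case $\diamo$); the arguments are symmetric.
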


\begin{proof}
For each $(t, (\mathrm{L})\int_{\alpha}^{t} f\dd\mu) \in \frakS_{\alpha}$, we have
\begin{align*}
\spC^{\natural}_{\w{T}_{\spI}}\circled\spC_{\alpha}\bigg(t, (\mathrm{L})\int_{\alpha}^{t} f\dd\mu\bigg)
&= \spC^{\natural}\circled\spC_{\alpha}\bigg(t, (\mathrm{L})\int_{\alpha}^{t} f\dd\mu\bigg) \\
&= \spC^{\natural}([\alpha, t], \w{T}_{\alpha}^{t}(f) ) \\
&= ([c,d], \w{T}_{\alpha}^{t}(f)).
\end{align*}
On the other hand, in the category $\scrA^1$ satisfying L-condition, the following equation
\[ \w{T}_{\alpha}^{t}(f) = \w{T}_{(\RR, t-\alpha, m)}(f) = (\mathrm{L})\int_{\alpha}^{t} f\dd\mu \]
is trivial. Then we have
\begin{align*}
\mathrm{id} \circled \spC^{\natural}|_{\frakS_{\alpha}}\bigg(t, (\mathrm{L})\int_{\alpha}^{t} f\dd\mu\bigg)
&= \spC^{\natural}|_{\frakS_{\alpha}}(t, \w{T}_{\alpha}^{t}(f)) \\
&= ([c,d], \w{T}_{\alpha}^{t}(f)).
\end{align*}
Thus, the first diagram commutes.

For any $(s, (\mathrm{L})\int_{\alpha}^{s} f\dd\mu), (t, (\mathrm{L})\int_{\alpha}^{s} g\dd\mu) \in \frakS_{\alpha}$, $s\ge t$, we have
\begin{align*}
& \spC^{\natural}|_{\frakS_{\alpha}} \circled (+)
\bigg(
{\bigg(\begin{matrix}
\spC|_{\frakS_{\alpha}} & 0 \\
0 & \spC|_{\frakS_{\alpha}}
\end{matrix}\bigg)}
\bigg(
\bigg(s, (\mathrm{L})\int_{\alpha}^{s} f\dd\mu\bigg),
\bigg(t, (\mathrm{L})\int_{\alpha}^{t} g\dd\mu\bigg)
\bigg)
\bigg) \\
=\ & \spC^{\natural}|_{\frakS_{\alpha}}
(([\alpha,s], \w{T}_{\alpha}^{s}(f)) + ([\alpha,t], \w{T}_{\alpha}^{t}(g))) \\
\mathop{=}\limits^{\spadesuit}\ &
\spC^{\natural}|_{\frakS_{\alpha}}
([\alpha,s], \w{T}_{\alpha}^{s}(\id_{[\alpha,s]}f + \id_{[\alpha,t]}g)) \\
=\ & ([c,d], \w{T}_{\alpha}^{s}(\id_{[\alpha,s]}f + \id_{[\alpha,t]}g) )
\end{align*}
and
\begin{align*}
& (+)\circled
{\bigg(\begin{smallmatrix}
\spC^{\natural}|_{\frakS_{\alpha}} & 0 \\
0 & \spC^{\natural}|_{\frakS_{\alpha}}
\end{smallmatrix}\bigg)}
\bigg(
\bigg(s, (\mathrm{L})\int_{\alpha}^{s} f\dd\mu\bigg),
\bigg(t, (\mathrm{L})\int_{\alpha}^{t} g\dd\mu\bigg)
\bigg) \\
=\ & ([c,d], \w{T}_{\alpha}^{s}(f))+([c,d],\w{T}_{\alpha}^{t}(g)) \\
\mathop{=}\limits^{\clubsuit}\ &
([c,d], \w{T}_{\alpha}^{s}(\id_{[\alpha,s]}f + \id_{[\alpha,t]}g) ),
\end{align*}
where $\spadesuit$ is obtained by the definition of addition (\ref{formula:addi-map});
and $\clubsuit$ holds since $\{[c,d]\}\times\RR$ is a $\itLamb$-module (see Theorem \ref{thm:spC Lambda-homo}),
$\w{T}_{\alpha}^{t}(g)=\w{T}_{\alpha}^{s}(\id_{[\alpha,t]}g)$, and $\w{T}_{\alpha}^{s}$ is a $\itLamb$-homomorphism.
Then the second diagram commutes.
\end{proof}


The following proposition provides two properties of $\spC^{\natural}_{\alpha}$, it will help us to provide a categorification of integrals with variable upper limit in Section \ref{sect:Int var upper lim}.

\begin{theorem}  \label{thm:proper}
If $\scrA^p$ satisfies the L-condition, then the following properties hold.
\begin{itemize}
\item[\rm(1)] For any $(\beta, (\mathrm{L})\int_{\alpha}^{\beta} f\dd\mu) \in \frakS_{\alpha, f}$ and the real number $\gamma$ satisfying $\beta\le\gamma\le d$, we have
\[  \spC^{\natural}_{\alpha}\circled\spC_{\alpha}
\bigg(\beta, (\mathrm{L})\int_{\alpha}^{\beta} f\dd\mu \bigg)
+ \spC^{\natural}_{\beta}\circled\spC_{\beta}
\bigg(\gamma, (\mathrm{L})\int_{\beta}^{\gamma} f\dd\mu \bigg)
= \spC^{\natural}_{\alpha}\circled\spC_{\alpha}
\bigg(\gamma, (\mathrm{L})\int_{\alpha}^{\gamma} f\dd\mu \bigg), \]
where the addition in the left of the above formula is defined by {\rm(}\ref{formula:addi-map}{\rm)}
{\rm(}notice that $\{[c,d]\}\times\kk$ needs to be seen as a subset of ${T}_{\spI}(\Widehat{\bfS_{\mathrm{id}_{\RR}}([c,d])})$ in this case{\rm)}.

\item[\rm(2)] $\spC^{\natural}_{\alpha}$
is a $\kk$-linear map, that is, for any $k_1,k_2\in\kk$,
$\tilde{f}:=(\beta, (\mathrm{L})\int_{\alpha}^{\beta} f\dd\mu ) \in \frakS_{\alpha, f}$
and $\tilde{g}:=(\beta, (\mathrm{L})\int_{\alpha}^{\beta} g\dd\mu) \in \frakS_{\alpha, g}$,
we have
\[ \spC^{\natural}_{\alpha}
(k_1 \spC_{\alpha}(\tilde{f})+k_2\spC_{\alpha}(\tilde{g}))
=  k_1\cdot \spC^{\natural}_{\alpha} \circled \spC_{\alpha}(\tilde{f})
+k_2\cdot \spC^{\natural}_{\alpha} \circled \spC_{\alpha}(\tilde{g}). \]
\end{itemize}
\end{theorem}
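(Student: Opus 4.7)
The plan is to unfold the definitions so that both sides of each identity live inside $\{[c,d]\}\times\RR$, then reduce to an equality of Lebesgue integrals using the $\kk$-linearity of $\w{T}_\alpha^t$ already established in Theorem \ref{thm:LLHZpre-2}(2). Under the L-condition we have the concrete identity $\w{T}_\alpha^t(f)=(\mathrm{L})\int_\alpha^t f\dd\mu$, so for every $h\in\Widehat{\bfS_{\mathrm{id}_{\RR}}([c,d])}$ and every subinterval $[\alpha,t]\subseteq[c,d]$ one has $\w{T}_\alpha^t(f)=\w{T}_c^d(\id_{[\alpha,t]}f)$. This allows me to regard each pair $([c,d],\w{T}_\alpha^t(f))$ arising from $\spC^{\natural}\circled\spC$ as a genuine element of $\w{T}_\spI(\Widehat{\bfS_{\mathrm{id}_{\RR}}([c,d])})$, which is the embedding alluded to in the parenthetical remark of the statement.

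For part (1), I would first compute, by the definition of $\spC$ in (\ref{mapsto:spC}) and of $\spC^\natural$ from Lemma \ref{lemm:spC surj}, that
\[
\spC^{\natural}_{\alpha}\circled\spC_{\alpha}\!\left(\beta,(\mathrm{L})\!\int_{\alpha}^{\beta} f\dd\mu\right)=([c,d],\w{T}_{\alpha}^{\beta}(f)),
\]
and analogously for the other two terms. Re-writing each of these via $\w{T}_{\alpha}^{\beta}(f)=\w{T}_c^d(\id_{[\alpha,\beta]}f)$ brings me to the case $[u,v]=[s,t]=[c,d]$ of the addition formula (\ref{formula:addi}). Applying branch $\diamo$ (equivalently $\heart$) yields
\[
([c,d],\w{T}_c^d(\id_{[\alpha,\beta]}f))+([c,d],\w{T}_c^d(\id_{[\beta,\gamma]}f))=([c,d],\w{T}_c^d(\id_{[\alpha,\beta]}f+\id_{[\beta,\gamma]}f)).
\]
Since $\mu(\{\beta\})=0$ under the L-condition, we have $\id_{[\alpha,\beta]}f+\id_{[\beta,\gamma]}f=\id_{[\alpha,\gamma]}f$ in $\Widehat{\bfS_{\mathrm{id}_{\RR}}([c,d])}$, so the second coordinate collapses to $\w{T}_c^d(\id_{[\alpha,\gamma]}f)=\w{T}_\alpha^\gamma(f)=(\mathrm{L})\int_\alpha^\gamma f\dd\mu$, matching the right-hand side.

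For part (2), I expand $k_1\spC_\alpha(\tilde f)+k_2\spC_\alpha(\tilde g)$ using the $\kk$-action from Lemma \ref{lemm:im(+) linear}: the two summands are $([\alpha,\beta],k_1\w{T}_\alpha^\beta(f))$ and $([\alpha,\beta],k_2\w{T}_\alpha^\beta(g))$. The addition (\ref{formula:addi-map}) with equal intervals (branch $\diamo$) gives $([\alpha,\beta],\w{T}_\alpha^\beta(k_1 f+k_2 g))$, where I use that $\w{T}_\alpha^\beta$ is a $\itLamb$-homomorphism to pull the scalars inside. Applying $\spC^\natural_\alpha$ yields $([c,d],k_1\w{T}_\alpha^\beta(f)+k_2\w{T}_\alpha^\beta(g))$ by the $\kk$-linearity part of Theorem \ref{thm:LLHZpre-2}(2). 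The right-hand side, expanded directly, becomes $k_1([c,d],\w{T}_\alpha^\beta(f))+k_2([c,d],\w{T}_\alpha^\beta(g))$, and the agreed addition and scalar action on $\{[c,d]\}\times\kk$ make these equal.

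The main obstacle will be bookkeeping in the case analysis of (\ref{formula:addi}): I must confirm that when both intervals coincide with $[c,d]$, branches $\diamo$ and $\heart$ are applicable and yield the same expression, and that the auxiliary identifications $\w{T}_\alpha^t(f)=\w{T}_c^d(\id_{[\alpha,t]}f)$ are legitimate rewritings inside $\w{T}_\spI(\Widehat{\bfS_{\mathrm{id}_{\RR}}([c,d])})$ (i.e., the two representatives correspond to the same point of the \iposet). Once this embedding is pinned down carefully and the measure-zero identification of $\id_{[\alpha,\beta]}+\id_{[\beta,\gamma]}$ with $\id_{[\alpha,\gamma]}$ is invoked, both parts reduce to the already-proved additivity and linearity of $\w{T}_c^d$.
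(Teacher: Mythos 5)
Your argument is correct, but it routes through the addition formula differently than the paper does. For part (1) the paper adds \emph{before} applying $\spC^{\natural}$: it writes the two summands as $([\alpha,\beta],\w{T}_{\alpha}^{\beta}(f))$ and $([\beta,\gamma],\w{T}_{\beta}^{\gamma}(f))$ and invokes Lemma \ref{lemm:spcase of addi}(1) (the shared-endpoint branch $\spadesuit$ of (\ref{formula:addi})) to get $([\alpha,\gamma],\w{T}_{\alpha}^{\gamma}(f))$ outright, then transports the identity through $\spC^{\natural}$ via the commutative diagrams of Lemma \ref{lemm:comm diagram}. You instead apply $\spC^{\natural}$ first, embed the results as $[c,d]$-indexed pairs via $\w{T}_{\alpha}^{\beta}(f)=\w{T}_{c}^{d}(\id_{[\alpha,\beta]}f)$, and use the nested-interval branch $\diamo$ together with the almost-everywhere identity $\id_{[\alpha,\beta]}f+\id_{[\beta,\gamma]}f=\id_{[\alpha,\gamma]}f$ (legitimate since $\mu(\{\beta\})=0$ and elements of $\Widehat{\bfS_{\mathrm{id}_{\RR}}([c,d])}$ are equivalence classes). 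Your version has the merit of computing the left-hand side exactly as the statement defines it, with the addition happening inside $\{[c,d]\}\times\kk$ viewed in $\w{T}_{\spI}(\Widehat{\bfS_{\mathrm{id}_{\RR}}([c,d])})$; the paper's appeal to Lemma \ref{lemm:comm diagram} is slightly loose at that point, since its second diagram is stated for two elements of $\frakS_{\alpha}$ while one summand lies in $\frakS_{\beta}$, but it avoids your measure-zero bookkeeping. For part (2) the paper simply notes that $\spC^{\natural}|_{\im(+)}$ is already $\kk$-linear by Theorem \ref{thm:spC Lambda-homo} and restricts it to the subspace $\im(\spC_{\alpha})$, whereas you re-verify linearity by hand from the equal-interval branch of (\ref{formula:addi}) and the linearity of $\w{T}_{\alpha}^{\beta}$; both are valid, the paper's being shorter and yours more self-contained.
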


\begin{proof}
(1) By the definition of $\w{T}_{\spI}$, we have
\[ \spC_{\alpha} \bigg(\beta, (\mathrm{L})\displaystyle\int_{\alpha}^{\beta} f\dd\mu \bigg)
= ([\alpha,\beta], \w{T}_{\alpha}^{\beta}(f)) \]
and
\[ \spC_{\beta} \bigg(\gamma, (\mathrm{L})\displaystyle\int_{\beta}^{\gamma} f\dd\mu \bigg)
= ([\beta,\gamma], \w{T}_{\beta}^{\gamma}(f)). \]
Then, by Lemma \ref{lemm:spcase of addi}, we obtain
\[ ([\alpha,\beta], \w{T}_{\alpha}^{\beta}(f)) + ([\beta,\gamma], \w{T}_{\beta}^{\gamma}(f))
= ([\alpha,\gamma], \w{T}_{\alpha}^{\gamma}(f))
= \spC_{\alpha} \bigg(\gamma, (\mathrm{L})\displaystyle\int_{\alpha}^{\gamma} f\dd\mu \bigg). \]
Thus,
\[  \spC_{\alpha}
\bigg(\beta, (\mathrm{L})\int_{\alpha}^{\beta} f\dd\mu \bigg)
+ \spC_{\beta}
\bigg(\gamma, (\mathrm{L})\int_{\beta}^{\gamma} f\dd\mu \bigg)
= \spC_{\alpha}
\bigg(\gamma, (\mathrm{L})\int_{\alpha}^{\gamma} f\dd\mu \bigg). \]
By the Lemma \ref{lemm:comm diagram}, the statement (1) holds.

(2) One can check that $\im(\spC_{\alpha})$ is a $\kk$-linear subspace of $\im(+)$.
By Theorem \ref{thm:spC Lambda-homo}, $\spC^{\natural}|_{\im(+)}$ is a $\itLamb$-epimorphism,
thus, $\spC^{\natural}|_{\im(+)}$ is a $\kk$-linear map.
Since $\im(\spC_{\alpha})$ is a $\kk$-linear subspace of $\im(+)$,
we obtain that $(\spC^{\natural}|_{\im(+)})|_{\im(\spC_{\alpha})}$ is also a $\kk$-linear map,
so is $\spC_{\alpha}^{\natural} = \spC^{\natural}|_{\im(\spC_{\alpha})} = (\spC^{\natural}|_{\im(+)})|_{\im(\spC_{\alpha})}$.
%
\end{proof}

\section{\sectcolor Integral with variable upper limit} \label{sect:Int var upper lim}
We assume $\kk=\RR$ in this section. The case for $\kk=\mathbb{C}$ is similar.
First, we let $\scrA^{p}$ be the category satisfying the L-condition (assume $\II=[c,d]$ and $\xi=\frac{c+d}{2}$), and $\spI=\{[c,t] \mid c\le t\le d\}$ in this section. We denote by $C_*([c,d])$ the Banach space of continuous functions $F:[0,1]\to \RR, x\mapsto F(x)$ such that $F(c)=0$ with the sup norm. We define
\[\eta: C_*([c,d]) \oplus C_*([c,d]) \to C_*([c,d]) \]
by
\[\eta(F, G)(x)=
\begin{cases}
\displaystyle\frac{1}{2}F(2x-c), & \text{ if } \displaystyle c\le x\le \frac{c+d}{2}; \\
\displaystyle\frac{1}{2}(F(d)+G(2x-d)), & \text{ if } \displaystyle \frac{c+d}{2} \le x \le d.
\end{cases}\]
Then $(C_{*}([c,d]), (\mathrm{id}_{[c,d]}: x\mapsto x), \eta)$ is an object in $\scrA^p$ (due to Mark Meckes, see \cite[Section 2]{Lei2023FA}).
Since $(\Widehat{\bfS_{\mathrm{id}_{\RR}}([c,d])}, d-c, \gamma)$ is an initial object of $\scrA^p$,
the following proposition holds.

\begin{proposition}[Leinster-Meckes, see {\cite[Proposition 2.4]{Lei2023FA}}]
There is a unique $\RR$-linear map
$$\w{T}_{(C_{*}([c,d]), \mathrm{id}_{[c,d]}, \eta)}\in \Hom_{\scrA^1}\big(
(L^1([c,d]), d-c, \gamma),
(C_{*}([c,d]), \mathrm{id}_{[c,d]}, \eta)
\big).$$
It is defined as $f \mapsto \w{T}_{(C_{*}([c,d]), \mathrm{id}_{[c,d]}, \eta)}(f)
= (\mathrm{L})\displaystyle\int_{c}^{x} f\dd\mu$ and it maps each function $f$ in the $L^1$-space $L^1([c,d])$ to its Lebesgue integral with variable upper limit $x$.
\end{proposition}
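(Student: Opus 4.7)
The plan is to deduce the statement as an immediate specialization of Theorem~\ref{thm:LLHZpre-1}, once we verify that $(C_*([c,d]), \mathrm{id}_{[c,d]}, \eta)$ really is an object of $\scrA^1$ and that the candidate map $f \mapsto (\mathrm{L})\int_c^{\,\cdot}\, f\, \dd\mu$ really is the morphism named by initiality. Since $\scrA^1$ is working under the L-condition, the initial object provided by Theorem~\ref{thm:LLHZpre-1} is exactly $(\w{\bfS_\tau(\II_\Lambda)}, \id_{\II_\Lambda}, \w{\gamma}_\xi) = (L^1([c,d]), d-c, \gamma)$. The uniqueness of $\w{T}_{(C_*([c,d]), \mathrm{id}_{[c,d]}, \eta)}$ is then automatic from the definition of initial object recalled in the introduction.

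First I would verify objecthood of $(C_*([c,d]), \mathrm{id}_{[c,d]}, \eta)$. As $\Lambda = \RR$ acts by scalars and $C_*([c,d])$ is a closed subspace of $C([c,d])$ under the sup norm, it is a Banach $\RR$-module. The element $\mathrm{id}_{[c,d]}$ (read, per the conventions of the paper, as $x \mapsto x-c$ so that it lies in $C_*([c,d])$) has sup norm equal to $d-c = \mu([c,d])$, satisfying the size condition. For $\eta$, one checks $\RR$-bilinearity in the obvious way and computes $\eta(\mathrm{id}_{[c,d]}, \mathrm{id}_{[c,d]})(x) = \mathrm{id}_{[c,d]}(x)$ by plugging in the two piecewise definitions: on $[c, (c+d)/2]$ one gets $\tfrac{1}{2}(2x-c-c) = x-c$, and on $[(c+d)/2, d]$ one gets $\tfrac{1}{2}((d-c) + (2x-d-c)) = x-c$. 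Continuity of $\eta$ in the sup-norm topology follows from the obvious $\sup$-norm bound on each piece, which yields the compatibility with $\Norm{\cdot}{\tau}$-Cauchy inverse limits required by the definition of $\scrN^p$.

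Next I would produce the candidate morphism $T(f) := (\mathrm{L})\int_c^{\,\cdot}\, f\, \dd\mu$. For $f \in L^1([c,d])$, the function $x \mapsto \int_c^x f\, \dd\mu$ is absolutely continuous with value $0$ at $x=c$, so $T(f) \in C_*([c,d])$; linearity of $T$ is standard Lebesgue theory; and $T(\mathrm{id}_{[c,d]})(x) = \int_c^x 1\, \dd\mu = x-c = \mathrm{id}_{[c,d]}(x)$ in $C_*([c,d])$, so the distinguished-element axiom is satisfied. The remaining and central verification is the juxtaposition intertwining
\begin{equation*}
T(\gamma(f,g))(x) = \eta(T(f), T(g))(x) \qquad \text{for all } f,g \in L^1([c,d]),\ x \in [c,d].
\end{equation*}
I would check this by splitting on whether $x \le (c+d)/2$ or $x \ge (c+d)/2$, using the definition of $\gamma_\xi$ (which, in the one-dimensional L-condition case, concatenates $f \circ \kappa_c^{-1}$ on $[c,(c+d)/2]$ with $g \circ \kappa_d^{-1}$ on $[(c+d)/2, d]$) and executing the substitutions $t = 2s - c$ and $t = 2s - d$ in the Lebesgue integral. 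Both cases reduce to the corresponding branches of $\eta$ after using $\int_c^{(c+d)/2} f(\kappa_c^{-1}(s))\, \dd s = \tfrac{1}{2}\int_c^d f(t)\, \dd t = \tfrac{1}{2} T(f)(d)$.

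Having verified that $T$ is a morphism $(L^1([c,d]), d-c, \gamma) \to (C_*([c,d]), \mathrm{id}_{[c,d]}, \eta)$ in $\scrA^1$, initiality of the source forces $T = \w{T}_{(C_*([c,d]), \mathrm{id}_{[c,d]}, \eta)}$, which is the claim. The main obstacle I anticipate is the change-of-variable bookkeeping in the juxtaposition check: one must match the factors of $\tfrac{1}{2}$ coming from $\eta$ with the Jacobians of $\kappa_c, \kappa_d$ and be careful that the piecewise formula for $\eta(T(f),T(g))$ on $[(c+d)/2, d]$ does receive the constant shift $T(f)(d) = \tfrac{1}{2}T(\gamma(f,g))(d)$, since this is where the $F(d)$ term in $\eta$ encodes ``continuing to integrate from $(c+d)/2$ onward.'' Everything else — the Banach/module structure, linearity, uniqueness — is formal.
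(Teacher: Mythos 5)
Your proposal is correct and follows the same route the paper takes (and delegates to the citation of Leinster--Meckes): verify that $(C_*([c,d]),\mathrm{id}_{[c,d]},\eta)$ is an object of $\scrA^1$, check that $f\mapsto(\mathrm{L})\int_c^x f\,\dd\mu$ is a morphism out of the initial object $(L^1([c,d]),d-c,\gamma)$, and conclude existence and uniqueness from initiality --- your change-of-variables verification of $T(\gamma(f,g))=\eta(T(f),T(g))$ is precisely the content the paper leaves to \cite{Lei2023FA}. One small slip in your closing remark: the constant shift appearing in the second branch is $\tfrac12 T(f)(d)=T(\gamma(f,g))\bigl(\tfrac{c+d}{2}\bigr)$, not $T(f)(d)=\tfrac12 T(\gamma(f,g))(d)$ (the latter fails unless $T(f)(d)=T(g)(d)$), but this does not affect the actual verification you describe.
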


Second, consider the set $\ImspC_{[c,d]}$ of all sets of the form $\w{T}_{\spI}(f) = \{([c,t], \w{T}_{c}^{t}(f)) \mid c\le t\le d \}$ fixing $c$, $d$ and $\spI$,
i.e., \[\ImspC_{[c,d]} := \big\{\w{T}_{\spI}(f) \mid f\in \Widehat{\bfS_{\mathrm{id}_{\RR}}([c,d])} \big\}
= \w{T}_{\spI}(\Widehat{\bfS_{\mathrm{id}_{\RR}}([c,d])}), \]
one can check that $\ImspC_{[c,d]}$ is an Abelian group, where the addition is defined by
\[  \ImspC_{[c,d]} \times \ImspC_{[c,d]} \to \ImspC_{[c,d]} \]
\begin{center}
$(\w{T}_{\spI}(f), \w{T}_{\spI}(g)) \mapsto \w{T}_{\spI}(f) + \w{T}_{\spI}(g) := \w{T}_{\spI}(f+g), $
\end{center}
and the zero element of $\ImspC_{[c,d]}$ is $\w{T}_{\spI}(0)$. The following lemma shows that $\ImspC_{[c,d]}$ is a $\RR$-linear space.

\begin{lemma} \label{lemm:ImspC linear}
Define the left $\RR$-action
\begin{align*}
\RR\times \ImspC_{[c,d]} & \to \ImspC_{[c,d]} \\
(k,\w{T}_{\spI}(f)) & \mapsto k\w{T}_{\spI}(f) := \w{T}_{\spI}(kf),
\end{align*}
then $\ImspC_{[c,d]}$ is an $\RR$-linear space.
\end{lemma}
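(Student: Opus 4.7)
The plan is to verify the four scalar-multiplication axioms for an $\RR$-linear space, using the fact that $\ImspC_{[c,d]}$ is already known to be an abelian group under the prescribed addition, and leveraging the $\RR$-linearity of each $\itLamb$-homomorphism $\w{T}_{c}^{t}$ from Theorem \ref{thm:LLHZpre-2}. Since elements of $\ImspC_{[c,d]}$ are indexed by representatives $f\in\Widehat{\bfS_{\mathrm{id}_{\RR}}([c,d])}$, I would first confirm that the prescribed scalar action $k\w{T}_{\spI}(f):=\w{T}_{\spI}(kf)$ is well-defined as an operation on $\ImspC_{[c,d]}$: if $\w{T}_{\spI}(f)=\w{T}_{\spI}(f')$, i.e.\ $\w{T}_{c}^{t}(f)=\w{T}_{c}^{t}(f')$ for all $t\in[c,d]$, then the $\RR$-linearity of each $\w{T}_{c}^{t}$ gives $\w{T}_{c}^{t}(kf)=k\w{T}_{c}^{t}(f)=k\w{T}_{c}^{t}(f')=\w{T}_{c}^{t}(kf')$, so $\w{T}_{\spI}(kf)=\w{T}_{\spI}(kf')$.

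Next, I would verify the scalar axioms by transferring them through the surjection $\Phi:\Widehat{\bfS_{\mathrm{id}_{\RR}}([c,d])}\to\ImspC_{[c,d]}$, $f\mapsto\w{T}_{\spI}(f)$, which by construction satisfies $\Phi(kf)=k\Phi(f)$ and $\Phi(f+g)=\Phi(f)+\Phi(g)$. Since $\Widehat{\bfS_{\mathrm{id}_{\RR}}([c,d])}$ is itself a normed $\RR$-linear space (indeed a Banach $\itLamb$-module, see Example \ref{completion}), the identities
\[ 1\w{T}_{\spI}(f)=\w{T}_{\spI}(f),\quad (k_1k_2)\w{T}_{\spI}(f)=k_1(k_2\w{T}_{\spI}(f)), \]
\[ (k_1+k_2)\w{T}_{\spI}(f)=k_1\w{T}_{\spI}(f)+k_2\w{T}_{\spI}(f), \]
\[ k(\w{T}_{\spI}(f)+\w{T}_{\spI}(g))=k\w{T}_{\spI}(f)+k\w{T}_{\spI}(g) \]
all follow immediately by pushing the corresponding identities for $f,g\in\Widehat{\bfS_{\mathrm{id}_{\RR}}([c,d])}$ through $\Phi$.

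There is essentially no analytic obstacle here; the only subtle point is the well-definedness above, which is what makes the transfer legitimate. Once well-definedness is secured, the proof reduces to a short sequence of one-line identities invoking $\RR$-linearity of $\w{T}_{c}^{t}$, and the statement follows.
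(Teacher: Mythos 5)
Your proof is correct, but it takes a genuinely different route from the paper's. You push the linear structure forward along the surjection $\Phi:\Widehat{\bfS_{\mathrm{id}_{\RR}}([c,d])}\to\ImspC_{[c,d]}$, $f\mapsto\w{T}_{\spI}(f)$, isolating well-definedness of the scalar action as the only nontrivial point and settling it by the pointwise $\RR$-linearity of each $\w{T}_{c}^{t}$; after that, every axiom is a one-line transport of the corresponding identity in the Banach module $\Widehat{\bfS_{\mathrm{id}_{\RR}}([c,d])}$. The paper instead goes in the opposite direction: it uses the bijection $\spC_{c,f}^{-1}$ of Proposition \ref{prop:spC-L} to identify $\ImspC_{[c,d]}$ with the family $S=\{\frakS_{c,f}\mid f\in\Widehat{\bfS_{\mathrm{id}_{\RR}}([c,d])}\}$ and verifies there that the transported addition and scalar multiplication are compatible with the formulas of Theorem \ref{thm:proper}~(2), i.e.\ with $\spC_{c}^{\natural}\circled\spC_{c}$. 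Your argument is shorter and more self-contained, needing none of the $\spC$-machinery; the paper's argument buys, as a byproduct, exactly the compatibility of the linear structure with the maps $\spC_{c}$ and $\spC_{c}^{\natural}$ that is exploited afterwards in Corollary \ref{thm:main}. Note also that your transfer of the additive identities tacitly uses that the addition $\w{T}_{\spI}(f)+\w{T}_{\spI}(g):=\w{T}_{\spI}(f+g)$ is itself well defined; this is part of the Abelian-group structure the paper asserts before the lemma, but the same pointwise argument you give for scalars covers it, so it would be worth one extra sentence.
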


\begin{proof}
By Proposition \ref{prop:spC-L}, we find that the map $\spC_{c,f}^{-1}$ from $\w{T}_{\spI}(f)$ to $\frakS_{c,f}$ is a bijection. Furthermore, let $k_{1}=k_{2}=1$ in Theorem \ref{thm:proper} (2), we get that $\spC_{c,f}^{-1}$ is an isomorphism $\w{T}_{\spI}(f) \mathop{-\!\!\!\to}\limits^{\cong} \frakS_{c,f}.$
Then, there is a bijection from $\ImspC_{[c,d]}$ to $S$ induced by $\spC_{c,f}^{-1}$,
\begin{center}
$\Game_{c,f}^{-1} : \ImspC_{[c,d]} \to S := \{\frakS_{c,f} \mid f\in \Widehat{\bfS_{\mathrm{id}_{\RR}}([c,d])}\}$
\end{center}
Thus, we just need to prove that $S$ is an $\RR$-linear space. This proof is divided to the following two parts
\begin{itemize}
\item[(a)] \textbf{Statement 1.}
{\it The set $S$ is an Abelian group, where the addition is defined by
\begin{align*}
S \times S & \to S \\
(\frakS_{c,f},\frakS_{c,g}) & \mapsto \frakS_{c,f} + \frakS_{c,g} := \frakS_{c,f+g}.
\end{align*}}
The addition defined in the above is compatible with the formula in Proposition \ref{thm:proper} (2),
that is, for any $t\in[c,d]$, $(t, (\mathrm{L})\int_{c}^{t} f\dd\mu)\in \frakS_{c,f}$, and $(t, (\mathrm{L})\int_{c}^{t} g\dd\mu) \in \frakS_{c,g}$,
we have
\begin{align*}
& \spC_c^{\natural}\circled\spC_c\bigg(t, (\mathrm{L})\int_{c}^{t} f\dd\mu \bigg)
+ \spC_c^{\natural}\circled\spC_c\bigg(t, (\mathrm{L})\int_{c}^{t} g\dd\mu \bigg) \\
=\ & ([c,d], \w{T}_{c}^{t}f\dd\mu) + ([c,d], \w{T}_{c}^{t}g\dd\mu) \\
\mathop{=}\limits^{\spadesuit}\
& ([c,d], \w{T}_{c}^{t}(f+g)\dd\mu) \\
=\ & \spC_c^{\natural}\circled\spC_c\bigg(t, (\mathrm{L})\int_{c}^{t} (f+g)\dd\mu \bigg),
\end{align*}
where $\spadesuit$ is given by the addition defined on $\im(\spC)$.
Thus, it is easy to see that $\spC_{c,f}^{-1}$ is an isomorphism between two Abelian groups.

\item[(b)] \textbf{Statement 2.}
{\it The set $S$ is an $\RR$-linear space, where the $\RR$-scalar multiplication is defined by
\begin{align*}
\RR \times S & \to S \\
(k,\frakS_{c,f}) & \mapsto k\frakS_{c,f}:= \frakS_{c,kf}.
\end{align*}}
The $\RR$-scalar multiplication is compatible with the formula in Proposition \ref{thm:proper} (2),
that is, for any $t\in[c,d]$, $k\in\RR$, and $(t, (\mathrm{L})\int_{c}^{t} f\dd\mu)\in \frakS_{c,f}$,
we have
\begin{align*}
& \spC_c^{\natural}\bigg( k\spC_c\bigg(t, (\mathrm{L})\int_{c}^{t} f\dd\mu \bigg)\bigg) \\
=\ & \spC_c^{\natural}\big( k ([c,t], \w{T}_{c}^{t}f\dd\mu) \big) \\
\mathop{=}\limits^{\clubsuit}\
& \spC_c^{\natural} ([c,t], \w{T}_{c}^{t}(kf)) \\
=\ & \spC_c^{\natural}\circled\spC\bigg(t, (\mathrm{L})\int_{c}^{t}kf\dd\mu\bigg)
\end{align*}
where $\clubsuit$ holds since $\im(\spC)$ is an $\RR$-linear subspace of $\im(+)$ whose $\RR$-scalar multiplication is defined in Lemma \ref{lemm:im(+) linear} (see Proposition \ref{prop:spC mod}, take $\itLamb=\RR$)
and $\w{T}_{c}^{t}$ is a $\itLamb$-homomorphism (see Theorem \ref{thm:LLHZpre-2}, take $\itLamb=\RR$) in the category $\scrA^1$ with $\II=[c,t]$.
\end{itemize} Therefore, $S$ is a $\RR$-linear space. 
\end{proof}

Finally, we prove the following theorem.

\begin{corollary}[Integral with variable upper limit] \label{thm:main}
We assume that the category $\scrA^p=\scrA^1$ satisfies the L-condition and take $\spI = \{[c, t] \mid c\le t\le d\}$.
As the $\kk$-\checkLiu{linear} bijection given by Proposition \ref{prop:spC-L}, the map $\spC_{\alpha,f}: \ \frakS_{\alpha,f} \to \w{T}_{\spI}(f)$ induces a correspondence
\[H: \ImspC_{[c,d]}
\ -\!\!\!\to C_*([c,d])\]
\begin{center}
$\w{T}_{\spI}(f) \mapsto (\mathrm{L})\displaystyle\int_{c}^{x} f\dd\mu$
\end{center}
such that:
\begin{itemize}
\item[{\rm(1)}] $H$ is an $\RR$-linear map;
\item[{\rm(2)}] there is an $\RR$-linear isomorphism
\[h: \Widehat{\bfS_{\mathrm{id}_{\RR}}([c,d])} \to L^1([c,d]) \]
such that the following diagram
\[\xymatrix{
\Widehat{\bfS_{\mathrm{id}_{\RR}}([c,d])}
\ar[d]_{\w{T}_{\spI}: f\mapsto\w{T}_{\spI}(f)}
\ar[rr]^{h}
&& L^1([c,d]) \ar[d]^{\w{T}_{(C_*([c,d]), d-c, \eta)}}  \\
\ImspC_{[c,d]} \ar[rr]_{H} & & C_*([c,d])
}\]
commutes.
\end{itemize}
\end{corollary}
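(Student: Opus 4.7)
My proof plan proceeds in four stages: check well-definedness of $H$, establish linearity (part (1)), construct the isomorphism $h$, and finally verify commutativity of the diagram (part (2)).

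First, I would verify that $H$ is well-defined as a map from $\ImspC_{[c,d]}$ to $C_*([c,d])$. Under the L-condition, Theorem \ref{thm:LLHZpre-2}(2) gives $\w{T}_{c}^{t}(f) = (\mathrm{L})\int_{c}^{t} f \dd\mu$, so $\w{T}_{\spI}(f) = \w{T}_{\spI}(g)$ in $\ImspC_{[c,d]}$ forces $(\mathrm{L})\int_{c}^{t} f \dd\mu = (\mathrm{L})\int_{c}^{t} g \dd\mu$ for every $t \in [c,d]$, hence the variable-upper-limit functions $x \mapsto (\mathrm{L})\int_c^x f\dd\mu$ and $x \mapsto (\mathrm{L})\int_c^x g\dd\mu$ coincide as elements of $C_*([c,d])$. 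Since the Lebesgue indefinite integral is a continuous function vanishing at $c$, the image indeed lies in $C_*([c,d])$.

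For part (1), I would exploit directly the $\RR$-linear structure on $\ImspC_{[c,d]}$ built in Lemma \ref{lemm:ImspC linear}, namely $\w{T}_{\spI}(f) + \w{T}_{\spI}(g) = \w{T}_{\spI}(f+g)$ and $k\w{T}_{\spI}(f) = \w{T}_{\spI}(kf)$. Combined with the additivity and homogeneity of the Lebesgue integral, one obtains
\[
H(\w{T}_{\spI}(f) + \w{T}_{\spI}(g)) = (\mathrm{L})\!\int_c^x (f+g)\dd\mu = H(\w{T}_{\spI}(f)) + H(\w{T}_{\spI}(g)),
\]
and analogously $H(k\w{T}_{\spI}(f)) = k H(\w{T}_{\spI}(f))$. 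This step is essentially formal once Lemma \ref{lemm:ImspC linear} is in place.

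For part (2), I would take $h$ to be the canonical isometric identification
\[
h: \Widehat{\bfS_{\mathrm{id}_{\RR}}([c,d])} \;\tildearrow\; L^1([c,d])
\]
provided by Example \ref{completion}(4) together with the remark $\underrightarrow{\lim} E_u \cong L^1([c,d])$ in the L-condition case (cf.\ \cite[Lemma 5.4]{LLHZpre}); this is an $\RR$-linear isometric isomorphism between two Banach spaces, each being the completion of simple functions with the $L^1$-norm. To check commutativity of the square, fix $f \in \Widehat{\bfS_{\mathrm{id}_{\RR}}([c,d])}$ and compute both composites: going right-then-down gives $\w{T}_{(C_*([c,d]), d-c, \eta)}(h(f)) = (\mathrm{L})\int_c^x h(f)\dd\mu$ by the Leinster--Meckes Proposition; going down-then-right gives $H(\w{T}_{\spI}(f)) = (\mathrm{L})\int_c^x f \dd\mu$ by the definition of $H$. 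Since $h$ is the canonical identification between the two realizations of $L^1([c,d])$, these two variable-upper-limit functions coincide pointwise in $x$, i.e.\ as elements of $C_*([c,d])$.

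The main obstacle is the final identification: one must check that the isomorphism $h$ intertwines the two integral constructions, so that the Lebesgue integral computed on $\Widehat{\bfS_{\mathrm{id}_{\RR}}([c,d])}$ agrees with the classical Lebesgue integral on $L^1([c,d])$. This reduces to verifying the equality on step functions (which follows from the definition $\w{T}_c^t(\id_X) = \mu([c,t] \cap X)$ under the L-condition, as both sides agree with the classical measure) and then extending by density and continuity of both $\w{T}_{\spI}(-)$ and $\w{T}_{(C_*([c,d]), d-c, \eta)}$, which are continuous since they are morphisms of Banach modules. Once this core identification is secured, both the $\RR$-linearity of $H$ and the commutativity of the diagram fall out formally, concluding the proof of the corollary.
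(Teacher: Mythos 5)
Your proposal is correct and follows essentially the same route as the paper: part (1) is the same formal computation from Lemma \ref{lemm:ImspC linear} plus linearity of the Lebesgue integral, and part (2) reduces to evaluating both composites on $f$ and observing both equal $(\mathrm{L})\int_c^x f\dd\mu$. The only cosmetic difference is that you build $h$ via the completion identification $\Widehat{\bfS_{\mathrm{id}_{\RR}}([c,d])}\cong \underrightarrow{\lim}E_u\cong L^1([c,d])$ (with a density check on step functions), whereas the paper's proof invokes uniqueness of initial objects in $\scrA^1$ to obtain $h$ directly; the paper's own remark following the corollary records that these give the same map.
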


\begin{proof}
(1) By Lemma \ref{lemm:ImspC linear}, $\ImspC_{[c,d]}$ is an $\RR$-linear space.
It is trivial that $C_{*}([c,d])$ is an $\RR$-linear space.
Now we prove that
\[ \heart := H (k_1\w{T}_{\spI}(f) + k_2\w{T}_{\spI}(g)) = k_1H(\w{T}_{\spI}(f)) + k_2H(\w{T}_{\spI}(g)) =: \diamo \]
holds for all $k_1, k_2\in \RR$ and $\w{T}_{\spI}(f)$, $\w{T}_{\spI}(g)\in \ImspC_{[c,d]}$.
Using Lemma \ref{lemm:ImspC linear} again, we have $k_1\w{T}_{\spI}(f) + k_2\w{T}_{\spI}(g) = \w{T}_{\spI}(k_1f+k_2g)$, then
\begin{align*}
\heart & = H(\w{T}_{\spI}(k_1f+k_2g)) \\
& = (\mathrm{L})\int_{c}^{x} (k_1f+k_2g) \dd\mu \\
& = k_1 \cdot (\mathrm{L})\int_{c}^{x} f \dd\mu + k_2\cdot (\mathrm{L})\int_{c}^{x} g \dd\mu \\
& = k_1 H(\w{T}_{\spI}(f)) + k_2 H(\w{T}_{\spI}(g)) = \diamo
\end{align*}
as required.

(2) Notice that $(\Widehat{\bfS_{\mathrm{id}_{\RR}}([c,d])}, \id_{[c,d]}, \widehat{\gamma}_{\frac{\checks{c+d}}{2}})$
is an initial object of $\scrA^1$ (cf. Theorem \ref{thm:LLHZpre-1}).
On the other hand, $(L^1([c,d]), \id_{[c,d]}, \gamma)$
is also an initial object of $\scrA^1$ (see \cite[Theorem 2.1]{Lei2023FA}), \checks{where} $\gamma$ be the $\kk$-linear map
\[L^1([c,d])\oplus L^1([c,d]) \to L^1([c,d])\]
defined as
\[ (f,g) \mapsto \gamma(f,g) :=
\begin{cases}
f(2x-c), & c \le x < \displaystyle\frac{c+d}{2}; \\
g(2x-d), & \displaystyle\frac{c+d}{2} < x < d.
\end{cases} \] We have the following isomorphism:
\[ h:
(\Widehat{\bfS_{\mathrm{id}_{\RR}}([c,d])}, \id_{[c,d]}, \widehat{\gamma}_{\frac{\checks{c+d}}{2}})
\mathop{\longrightarrow}\limits^{\cong} (L^1([c,d]), \id_{[c,d]}, \gamma)
\]
which induces an $\RR$-isomorphism sending each function $f:[c,d]\to\RR$ to itself.

Therefore, for any $f\in \Widehat{\bfS_{\mathrm{id}_{\RR}}([c,d])}$, we have
\[H\circled\w{T}_{\spI}(f) = H(\w{T}_{\spI}(f)) = (\mathrm{L}) \int_{c}^{x} f\dd\mu
= \w{T}_{(C_{*}([c,d]), d-c, \eta)}(f) \]

\end{proof}

\begin{remark}
(1) The isomorphism $h$ is induced by
\[\Widehat{\bfS_{\mathrm{id}_{\RR}}([c,d])} \mathop{\cong}\limits^{\text{\cite{LLHZpre}}}
\underrightarrow{\lim} E_u \mathop{\cong}\limits^{\text{\cite{Lei2023FA}}} L^1([c,d])\]

(2) Since $h$ is an isomorphism, we have
$\w{T}_{(C_*([c,d]), d-c, \eta)} = H \circled \w{T}_{\spI}\circled h^{-1}$,
then
\[ \im(\w{T}_{(C_*([c,d]), d-c, \eta)})
= \im(H \circled \w{T}_{\spI}\circled h^{-1})
= \im(H \circled \w{T}_{\spI}).  \]
It follows that
\[ H \circled \w{T}_{\spI}\circled h^{-1} :  \Widehat{\bfS_{\mathrm{id}_{\RR}}([c,d])}
\to \im(\w{T}_{(C_*([c,d]), d-c, \eta)}) \]
is surjective, thus the following diagram
\[\xymatrix{
\Widehat{\bfS_{\mathrm{id}_{\RR}}([c,d])}
\ar[d]_{\w{T}_{\spI}: f\mapsto\w{T}_{\spI}(f)}
\ar[rr]^{h}
&& L^1([c,d]) \ar[d]^{\w{T}_{(C_*([c,d]), d-c, \eta)}}  \\
\ImspC_{[c,d]} \ar[rr]_{H} & & \im(\w{T}_{(C_*([c,d]), d-c, \eta)})
}\]
commutes by Corollary \ref{thm:main} (2).
Note that $\w{T}_{\spI}: \Widehat{\bfS_{\mathrm{id}_{\RR}}([c,d])} \to \ImspC_{[c,d]}$ is surjective,
we obtain an $\RR$-linear isomorphism $\ImspC_{[c,d]} \cong \im(\w{T}_{(C_*([c,d]), d-c, \eta)}).$

(3) In particular, $\ImspC_{[c,d]}$ provide a categorification of absolutely continuous function, that is,
all functions lying in $\ImspC_{[c,d]}$ are absolutely continuous in analysis.
\end{remark}

\section{\sectcolor Applications}

\subsection*{\sectcolor Application 1: Basic elementary functions} \label{sect:app1}

\def\rmas{\mathsf{as}}
\def\rms{\mathsf{s}}
\def\rmac{\mathsf{ac}}
\def\rmc{\mathsf{c}}

In this section, we will provide a categorification of basic elementary functions by homomorphisms in the category $\scrA^p$.

\subsection{\sectcolor (Anti-)trigonometric functions and their categorifications} \label{subsect:trigonometric}
\
This subsection is divided into two parts \ref{subsect:trigonometric 1} and \ref{subsect:trigonometric 2}. In the first part, we define two functions $\rmas$ and $\rmac$ by homomorphisms in the category $\scrA^p$,
and show that their inverses $\rms$ and $\rmc$ are periodic functions whose periods are $4K$. In the second part, we show that $2K=\pi$.

\subsubsection{\sectcolor Functions $\rmas$,$\rmac$, $\rms$, and $\rmc$ } \label{subsect:trigonometric 1}
We take \checkLiu{$\kk=\mathbb{C}$ and} $\scrA^p_{\itLamb} = \scrA^1_{\mathbb{C}}  := (\scrA^1_{\mathbb{C}}, [c,d], \kappa_c, \kappa_d, \mu)$ satisfying \checkLiu{the} L-condition.
Let $\spI = \{[0,y] \mid 0\le y \le d\}$. and $f: [c, d] \to \mathbb{C}, t \mapsto \frac{1}{\sqrt{1-t^2}}$.
Now, we consider the following two {\iposet\!\!s}
\[ \w{T}_{\spI}(f)
= \{ ([0,y], \w{T}_{0}^{y}(f))
\ |\  0\le y\le d \}
\in \ImspC_{[0,d]}; \]
and
\[ \w{T}_{\spI}(-\id_{[1,y]}^{\dag}f)
= \{ ([0,y], \w{T}_{0}^{y}(-\id_{[1,y]}^{\dag}f))
\ |\  0\le y\le d \}
\in \ImspC_{[0,d]}, \]
where
\[ \id_{[1,y]}^{\dag} = \begin{cases}
\id_{[1,y]}: t\mapsto  1, & \text{ if } y\ge 1;  \\
-\id_{[y,1]}: t\mapsto -1, & \text{ if } y < 1.
\end{cases} \]
Assume $y\ge 1$. By using the addition defined on the Abelian group $\ImspC_{[0,d]}$, we have
\begin{align*}
\w{T}_{\spI}(f)
+ \w{T}_{\spI}(-\id_{[1,y]}^{\dag}f)
= \w{T}_{\spI}(\id_{[0,1]} f)
= \{([0,y], K) | 1\le y\le d \},
\end{align*}
where $K = \w{T}_0^1(\id_{[0,1]}f) = \w{T}_0^1(f)$ is a constant.
Assume $c\le y\le d$ and define
\[ \w{T}_{y}^{1}(f) =
\begin{cases}
\w{T}_{y}^{1}(f), & \text{ if } y\le 1; \\
-\w{T}_{1}^{y}(f), & \text{ if } y\ge 1.
\end{cases} \]
For any $r,s\in\RR$, we define
\[ [r,s]^{\dag} =
\begin{cases}
[r,s], & \text{ if } r\le s; \\
[s,r], & \text{ if } r\ge s.
\end{cases} \]
Then each pair $([y,1]^{\dag}, \w{T}_{y}^{1}(f))$ in $\w{T}_{\widetilde{\spI}}(f)$ corresponds to a pair $(y, \w{T}_{y}^{1}(f))$ in $\mathbb{C}\times\mathbb{C}$.
Naturally, the pair $(y, \w{T}_{y}^{1}(f))$ induces a function $\rmac: \mathbb{C}\to \mathbb{C}, y\mapsto \w{T}_{y}^{1}(f)=:x$,
and we obtain
\[ x = \w{T}_{y}^{1}(f) = \w{T}_{y}^{-1}(f)+\w{T}_{-1}^{1}(f)
= \w{T}_{y}^{-1}(f) + 2K \]
(It can be proved by (\ref{formula:addi-map}), notice that we need to consider the different values of $y$).
Consider the inverse function $\rmc = \rmac^{-1}$ of $\rmac$, then $y=\rmc(x)$, and we have $\rmc(x-2K) = \rmac^{-1}(\w{T}_{y}^{-1}(f))$.
Since $[y,-1]^{\dag}$ and $[1,-y]^{\dag}$ are symmetric with respect to $0$,
and $f(-t)=f(t)$ holds for all $t\in\mathbb{C}$, it can be checked that
\[ \w{T}_{y}^{-1}(f) - \w{T}_{1}^{-y}(f)
= \w{T}_{y}^{-1}(\frac{1}{\sqrt{1-t^2}}) - \w{T}_{1}^{-y}(\frac{1}{\sqrt{1-t^2}})
= 0 \]
by using Corollary \ref{thm:main}, the $\RR$-linear map $\w{T}$ is a $\itLamb$-homomorphism in category $\scrA^1_{\mathbb{C}}$. Thus,
\[ \rmc(x-2K) = \rmac^{-1}(\w{T}_{y}^{-1}(f)) = \rmac^{-1}(\w{T}_{1}^{-y}(f)) = -y = -\rmc(x). \]
Furthermore, we obtain
\begin{align}\label{formula:rmc}
\rmc(x \pm 2uK) = (-1)^{u}\rmc(x).
\end{align}

We can induce a function $\rmas: \mathbb{C}\to \mathbb{C}, y\mapsto \w{T}_{0}^{y}(f)=:x$ and its inverse function $\rms:=\rmas^{-1}$, and show that
\begin{align}\label{formula:rms}
\rms(x \pm 2uK) = (-1)^{u}\rms(x)
\end{align}
by the similar way.

\subsubsection{\sectcolor The periods of $\rmc$ and $\rms$} \label{subsect:trigonometric 2}
Let $K$ be the element in $\mathbb{C}$ in the formulas (\ref{formula:rmc}) and (\ref{formula:rms}).
Take $\scrA^p_{\itLamb} = \scrA^1_{\RR\times[0,2\pi)} := (\scrA^1_{\RR\times[0,2\pi)}, \II, \kappa_{(R,0)}, \kappa_{(R,2\pi)}, \mu)$, where
\begin{itemize}
\item[(1')] $p=1$;
\item[(2')] $\kk=\itLamb=\RR\times[0,2\pi) = \{(r,\theta)\mid r\in\RR, 0\le\theta<2\pi\}$ ($\cong\mathbb{C}/\langle (0, 2\pi) \rangle$),
the multiplication of $\RR\times[0,2\pi)$ is defined by
\[(r_1,\theta_1)\cdot(r_2,\theta_2):=(r_1r_2, \theta),
\ \theta \equiv (\theta_1+\theta_2) (\bmod 2\pi),
\ 0\le \theta < 2\pi, \]
which is induced by the multiplication of $\mathbb{C}$;
\item[(3')] $\II = \{(R,\theta) \mid 0\le\theta\le 2\pi\}$, $R\in\RR^{+}$, the fully order is defined as
\[ (R,\theta_1) \preceq (R,\theta_2) \text{ if and only if } \theta_1\le \theta_2; \]
\item[(4')] $\xi=(R,\pi)$, $\kappa_{(R,0)}(R,t) = (R,\frac{t}{2})$,
$\kappa_{(R,2\pi)}(R,t) = (R, \frac{t+2\pi}{2})$;
\item[(5')] $\tau: \RR\times [0,2\pi) \to \RR$ is a restriction of $\mathrm{id}_{\RR}$;
\item[(6')] $\mu: \Sigma(\II) \to \RR^{\ge0}$ is the measure defined on $\Sigma(\II)$ sending each arc $c$ to the length of $c$.
\end{itemize}

Then the circumference of the circle $\II$ is equal to $2\pi R$, which can be expressed in terms of the curvilinear integral
\begin{align*}
(\scrA^1_{\RR\times[0,2\pi)})\int_{\II} \dd\mu
= \mathop{\int\mkern-20.8mu \circlearrowleft}\limits_{x^2+y^2=R^2}{\dd s}.
\end{align*}
The right of the above is
\begin{align*}
\mathop{\int\mkern-20.8mu \circlearrowleft}\limits_{x^2+y^2=R^2}{\dd s}
& = \int_{-R}^{R} \sqrt{1+\frac{x^2}{R^2-x^2}} \dd x \\
& = \int_{-R}^{R} \frac{1}{\sqrt{1-(x/R)^2}} \dd x \\
& = R \int_{-1}^{1} \frac{1}{\sqrt{1-t^2}} \dd t = KR.
\end{align*}
Thus, $\displaystyle K=\frac{\pi}{2}$, and the formulas (\ref{formula:rmc}) and (\ref{formula:rms}) are
$\rmc(x+u\pi) = (-1)^u\rmc(x)$ and $\rms(x+u\pi) = (-1)^u\rms(x).$
Furthermore, we have $\rmc(x+2\pi) = \rmc(x) \text{ and }
\rms(x+2\pi) = \rms(x).$
The functions $\rmac$, $\rmas$, $\rmc$ and $\rms$ are called arccosine, arcsine, cosine and sine and are written as ``$\arccos$'', ``$\arcsin$'', ``$\cos$'' and ``$\sin$'' \checkLiu{in classical analysis}, respectively.

\begin{remark}
Since we can define the inverse functions of elliptic integrals with variable upper limit as elliptic functions similarly, the method to define trigonometric functions provides a description of elliptic functions.
\end{remark}

\subsection{\sectcolor Logarithmic and exponential functions} \label{subsect:log and exp}

Let $\scrA^p_{\itLamb} = \scrA^1_{\RR} = (\scrA^1_{\RR}, [1,d], \kappa_1, \kappa_d, \mu)$ satisfy the L-condition.

\subsubsection{\sectcolor Logarithmic functions} \label{subsect:log and exp 1}

Take $c=1$ and consider the map $f: [1,d]\to \RR, 1\mapsto \frac{1}{t}$; the homomorphism $\w{T}_{(\RR, y-1, m)}$ in $\scrA^1_{\RR}$; and $\spI=\{[1,y]\mid 1\le y\le d\}$ in this part.
Then we have a \iposet \[ \w{T}_{\spI}(f) = \{([1,y], \w{T}_1^y(f))\mid 1\le y\le d\} \in \ImspC_{[1,d]}. \]
On the other hand, consider the $\RR$-algebra $\itLamb_0=\RR$ and the category $\scrA^p_{\itLamb_0} = \scrA^1_{\RR} = (\scrA^1_{\RR}, [1,d], \kappa_1, \kappa_d, \mu_0)$
that satisfies the conditions (L1) -- (L5) such that the measure $\mu_0:\Sigma([1,d])\to\RR^{\ge 0}$ sends each interval $[y_1,y_2] \subseteq [1,d]$ to the real number $r(y_2-y_1)$, where $r\in\RR^{>0}$.
The map $\varphi: \RR \to \RR, x\mapsto rx$ is a continuous and non-decreasing monotonic function such that
\[\mu_0 = \varphi\circled\mu: \Sigma([1,d]) \to \RR^{\ge 0} \]
is also a measure, and $\fct$ induces a $\kk$-linear isomorphic
\[\iso: \itLamb_0=\RR \to \itLamb=\RR, \ x\mapsto rx. \]
Then, for all $1\le y\le d$, we have
\[ (\scrA^p_{\itLamb_0})\int_{[\varphi(1),\varphi(y)]} (f\circled\iso)\dd\mu_0
= (\scrA^p_{\itLamb})\int_{[1,y]} f\dd\mu \]
by \cite[Theorem 3.5]{LGWLpre}.
Furthermore, the above equation is equivalent to the following integration by substitution
$$\displaystyle (\text{L-S}) \int_{r}^{ry} (f\circled\iso) \dd\varphi = (\text{L}) \int_{1}^{y} f \dd\mu,$$
cf. \cite[Section 4, the subsections 4.1 and 4.3]{LGWLpre}.
Similarly, we can prove
$$\displaystyle (\text{L-S}) \int_{r}^{ry} (f\circled\iso) \dd\varphi = (\text{L}) \int_{r}^{ry} f \dd\mu$$
using \cite[Corollary 3.6]{LGWLpre}. Thus,
\[\displaystyle(\text{L}) \int_{1}^{y} f \dd\mu = (\text{L}) \int_{r}^{ry} f \dd\mu,\]
i.e.,
\begin{align} \label{log:y trans to ry}
\w{T}_{1}^{y} (f) = \w{T}_{r}^{ry}(f),\ \forall r\in\RR^{>0}, 1\le y \le d.
\end{align}
Furthermore, we have
\begin{align} \label{log:addition}
\w{T}_{1}^{y_1}(f) + \w{T}_{1}^{y_2}(f)
\mathop{=}\limits^{\spadesuit} \w{T}_{1}^{y_1}(f) + \w{T}_{y_1}^{y_1y_2}(f)
\mathop{=}\limits^{\clubsuit} \w{T}_{1}^{y_1y_2}(f),
\end{align}
where, $\spadesuit$ holds by using (\ref{log:y trans to ry}) and $\clubsuit$ holds by using (\ref{formula:addi-map}).
In analysis, the function $x\mapsto \w{T}_{1}^{y}(f)$ induced by the \iposet $\w{T}_{\spI}(f)$ is called a {\defines logarithmic function} and it is denoted by $x=\ln y$. Then (\ref{log:addition}) can be written as
\[ \ln y_1 + \ln y_2 = \ln (y_1y_2). \]
We can also consider the situation for $x\mapsto \w{T}_{y}^{1}(f)$ with $0<y<1$ by the dual method
and obtain the definition of the logarithmic function defined in $\RR^{>0}$.

\subsubsection{\sectcolor Exponential functions} \label{subsect:log and exp 2}
Notice that the function $x\mapsto \ln y:=\w{T}_{1}^{y}(f)$ induces a correspondence $y \mapsto \ln^{-1} x$, which is provided by \iposet $\w{T}_{\spI}(f)$ in the part \ref{subsect:log and exp 1} of Subsection \ref{subsect:log and exp} .
The notation ``$\ln^{-1}$'' represents the inverse of $\ln$ as a mapping $y\mapsto \ln y$.
Then for any $y_1, y_2$ (assume $\ln y_1=x_1$ and $\ln y_2 = x_2$), by (\ref{log:addition}), we have
\begin{align}\label{exp:multi}
\ln^{-1}x_1 \ln^{-1}x_2 = y_1y_2 = \ln^{-1}(\ln y_1 + \ln y_2) = \ln^{-1}(x_1+x_2).
\end{align}

\begin{remark}
In fact, $\ln^{-1}t$ is the function $\mathrm{e}^t$ \checkLiu{in classical analysis}. Thus, (\ref{exp:multi}) is equivalent to
\[ \mathrm{e}^{x_1}\mathrm{e}^{x_2} = \mathrm{e}^{x_1+x_2}, \]
where $\mathrm{e} = \lim\limits_{n\to\infty}(1+\frac{1}{n})^n = 2.718281828\ldots$. As an inverse function of integration with variable upper limit $\displaystyle (\text{L})\int_1^{t} \frac{1}{x} \dd\mu$, $\ln^{-1}t$ provides a definition of $\mathrm{e}^t$ using the category $\scrA^p$, this article does not infer a relationship between $\ln^{-1}t$ and $\mathrm{e}$.
\end{remark}

\subsection*{\sectcolor Application 2: The global dimensions of gentle algebras} \label{sect:app2 gentle}

Recall that a quiver is a quadruple $\Q=(\Q_0,\Q_1,\s,\t)$,
where $\Q_0$ and $\Q_1$ are sets of the {\defines vertex} and {\defines arrow}, the elements are called {\defines vertices} and {\defines arrows}, respectively;
$\s$, $\t$ are two maps $\Q_1\to\Q_0$ sending each element in $\Q_1$ to the element's starting point and ending point, respectively.
A {\defines path of length $l$} on $\Q$ is a sequence of arrows, written as $a_1\cdots a_l$, such that $\t(a_i)=\s(a_{i+1})$ ($1\le i\le l-1$).
\checkLiu{We use $\ell(a_1\cdots a_l)$ to represent the length of $a_1\cdots a_l$.}
Then $\kk\Q$ is the $\kk$-linear space whose basis is the set of all paths on $\Q$, it is a $\kk$-algebra and we call it {\defines path algebra of $\Q$}, its multiplication is defined as
\[ \kk\Q \times \kk\Q \to \kk\Q,\]
\[ (p,q) \mapsto
\begin{cases}
pq, &  \text{ if } \t(p)=\s(q); \\
0,  &  \text{ otherwise. }
\end{cases} \]
In this section, we establish some connections between the integrals and the global dimension of gentle algebras. We obtained two simple and interesting results; see Theorem~\ref{fact:1} and Theorem~\ref{fact:2}.
We always assume that $\kk=\RR$ or $\mathbb{C}$ in this section; and each ideal of $\kk\Q$, written as $\I$, is generated by some linear combinations of paths of length $\ge 2$ such that the quotient $\kk\Q/\I$ of $\kk\Q$ is finite-dimensional.

\subsection{\sectcolor Vertex $\itLamb$-homomorphism $\rmv$ and its restrictions} \label{subsect:rmv}
Let $\itLamb=\kk\Q/\I$ be a finite-dimensional algebra. Then, for any homomorphism $\tau: \itLamb\to\kk$ of $\kk$-algebra, $\itLamb$ has a left $\itLamb$-module structure which is defined by
\[ \itLamb \times \itLamb \to \itLamb, \ (a,x) \mapsto a\star x := \tau(a)x. \]
Notice that $\itLamb$ can be seen as a regular module, that is,
\[ \itLamb \times \itLamb \to \itLamb, \ (a,x) \mapsto ax, \]
Then $\itLamb$ is a $(\itLamb,\itLamb)$-bimodule.
In this subsection, we only consider the case for $\itLamb$ with the left $\itLamb$-module structure.
Thus, $\itLamb$ is a normed left-$\itLamb$-module.

By the definition of path algebra, we have
\[\itLamb = \bigoplus_{l=0}^{+\infty}\bigoplus_{\wp \in\Q_l\backslash\I} \kk \wp + \I, \]
where $\Q_l$ is the set of all paths of length $l$.
The {\defines vertex $\itLamb$-homomorphism} defined on $\itLamb$ is a map $\itLamb \to \kk$ sending each element
$\rmv: \sum\limits_{l=0}^{+\infty} \sum\limits_{\wp\in\Q_1} k_{\wp}\wp + \I$ to the sum $\sum\limits_{v\in\Q_0}k_v$,
where $k_v=k_{e_v}$, and $e_v$ is the path of length zero corresponding to $v\in\Q_0$.
One can check that the vertex $\itLamb$-homomorphism $\rmv$ is a $\itLamb$-homomorphism of $\itLamb$-modules,
where the left $\itLamb$-module structure of $\kk$ is defined as $\itLamb\times\kk \to \kk$, $(a,k)\mapsto a\star k:=\tau(a)k$. For any $a\in\itLamb$, we have the following equation:
\begin{align} \label{formula:in Subsect 6.1}
\rmv\bigg(a \star \sum\limits_{l=0}^{+\infty} \sum\limits_{\wp\in\Q_1}        k_{\wp}\wp + \I \bigg)
& = \rmv\bigg(  \sum\limits_{l=0}^{+\infty} \sum\limits_{\wp\in\Q_1} \tau(a)k_{\wp}\wp + \I \bigg)
= \sum\limits_{v\in\Q_0} \tau(a)k_{\wp} \nonumber \\
& = a \star \sum\limits_{v\in\Q_0} k_{\wp}
= a \star \rmv \bigg(\sum\limits_{l=0}^{+\infty} \sum\limits_{\wp\in\Q_1} k_{\wp}\wp + \I \bigg).
\end{align}

\begin{remark}
Recall that an {\defines augmentation} of an associative algebra $A$ over a field (or a commutative ring) $\kk$ is a homomorphism of $\kk$-algebra $A\to\kk$, see \cite[Chapter VIII]{CE1956} or cf., for example, \cite{LV2012}.
An algebra together with an augmentation is called an {\defines augmented algebra}. For example, a homomorphism defined by $\sum_{g\in G}k_gg \mapsto \sum_{g\in G}k_g$ from a group algebra $A=\kk[G]$ to the field $\kk$ is an augmentation.
One can check that the following properties:
\begin{itemize}
\item[(1)] The composition
\[\xymatrix{ \itLamb=\kk\Q/\I \ar[rr]^{\text{canonical} \atop \text{epimorphism}}
&& \itLamb/\rad(\itLamb) \ar[rr]^{[1\ \cdots\ 1]_{1\times n}}
&& \kk }\]
is a sum of $n$ augmentations.
\item[(2)] Vertex $\itLamb$ homomorphism is the sum of some augmentations.
\end{itemize}
\end{remark}

\subsubsection{\sectcolor $\itLamb$-homomorphisms $\rmv_q$} \label{subsubsect:rmv res}
For any $l\in\NN$ and arbitrary $q = a_1a_2\cdots a_l \in \Q_l$, we define the function $\rmv_q$ as a map
\begin{align*}
\rmv_q:\ \kk\Q/\I = \bigoplus_{l=0}^{+\infty}\bigoplus_{\wp\in\Q_l\backslash\I} \kk\wp  + \I
& -\!\!\!\to \kk, \\
\sum\limits_{l=0}^{+\infty} \sum\limits_{\wp\in\Q_1} k_{\wp}\wp + \I
& \mapsto k_{\s(a_1)} + \cdots + k_{s(a_l)},
\end{align*}
which is induced by the restriction $\rmv\Big|{}_{\bigoplus\limits_{i=1}^{n}\kk e_{\s(a_i)}}$ of $\rmv$.
The following lemma shows that $\rmv_q$ is a homomorphism of the $\kk$-algebras.

\begin{lemma}\label{lemm:vpfunction}
For any path $q$ of the path algebra $\itLamb$, $\rmv_q$ is a $\itLamb$-homomorphism.
\end{lemma}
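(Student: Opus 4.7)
The plan is to imitate the calculation (\ref{formula:in Subsect 6.1}) that established the analogous statement for $\rmv$ itself, since $\rmv_q$ is by construction just the restriction of $\rmv$ to the $\kk$-subspace spanned by the vertex idempotents $e_{\s(a_1)},\ldots,e_{\s(a_l)}$, extended by zero on the complementary summand. First I would recast $\rmv_q$ as the composition $\rmv\circ\pi_q$, where $\pi_q:\itLamb\to\bigoplus_{i=1}^{l}\kk e_{\s(a_i)}$ is the $\kk$-linear projection onto the idempotent components indexed by the sources of the arrows of $q$. This reformulation makes clear that $\rmv_q$ is well-defined on $\itLamb=\kk\Q/\I$ (it only depends on the coefficients of length-zero paths, none of which lie in $\I$ because $\I$ is generated by paths of length $\ge 2$) and that it is $\kk$-linear, since both $\rmv$ and $\pi_q$ are.

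Next I would verify compatibility with the left $\itLamb$-action $a\star x:=\tau(a)x$ on both source and target. For $a\in\itLamb$ and $x=\sum_{l\ge 0}\sum_{\wp\in\Q_l}k_\wp\wp+\I\in\itLamb$, expanding gives
\begin{align*}
\rmv_q(a\star x)
&=\rmv_q\!\bigg(\sum_{l\ge 0}\sum_{\wp\in\Q_l}\tau(a)k_\wp\wp+\I\bigg)
=\sum_{i=1}^{l}\tau(a)k_{\s(a_i)}\\
&=\tau(a)\sum_{i=1}^{l}k_{\s(a_i)}
=a\star\rmv_q(x),
\end{align*}
which is exactly the $\itLamb$-linearity condition. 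Additivity $\rmv_q(x+y)=\rmv_q(x)+\rmv_q(y)$ is immediate from the additivity of coefficient extraction, so $\rmv_q$ is a homomorphism of left $\itLamb$-modules.

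There is essentially no obstacle here; the only thing to be careful about is ensuring that the definition of $\rmv_q$ is independent of the representative used to describe an element of $\itLamb=\kk\Q/\I$. This is automatic because $\I$ is generated by $\kk$-linear combinations of paths of length $\ge 2$, so every element of $\I$ has zero coefficient at each length-zero path $e_v$, and in particular at each $e_{\s(a_i)}$. Thus $\pi_q$ descends unambiguously from $\kk\Q$ to $\kk\Q/\I$, and the argument above completes the proof. The entire verification mirrors formula (\ref{formula:in Subsect 6.1}) restricted to the finite index set $\{\s(a_1),\ldots,\s(a_l)\}\subseteq\Q_0$.
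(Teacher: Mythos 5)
Your proof is correct and follows essentially the same route as the paper: a direct check of $\kk$-linearity on coefficients, combined with the computation of (\ref{formula:in Subsect 6.1}) (pulling $\tau(a)$ out of the sum over $\s(a_1),\dots,\s(a_l)$) to get compatibility with the $\star$-action. Your explicit remark that $\rmv_q$ is well defined on $\kk\Q/\I$ because $\I$ is generated by combinations of paths of length $\ge 2$ is a point the paper leaves implicit, but it does not change the argument.
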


\begin{proof}
For any two elements
\begin{center}
$\displaystyle x=\sum\limits_{l=0}^{+\infty} \sum\limits_{\wp\in\Q_1} k_{\wp}\wp + \I$ and
$\displaystyle x' =\sum\limits_{l=0}^{+\infty} \sum\limits_{\wp'\in\Q_1} h_{\wp'}\wp' + \I$
($k_{\wp}, h_{\wp} \in \kk$)
\end{center}
and $k, k'\in \kk$, we have
\begin{center}
$\displaystyle \rmv_q(kx) = \sum_{v\in\{e_{\s(a_i)} \mid 1\le i\le l\}} kk_{v} $;
$\displaystyle \rmv_q(k'x') = \sum_{v\in\{e_{\s(a_i)} \mid 1\le i\le l\}} kh_{v} $;

and \ \
$\displaystyle kx+k'x' = \sum_{v\in\Q_0} (kk_{v}+k'h_{v})e_v + \sum_{\ell(\wp)\ge 1} (kk_{\wp}+k'h_{\wp}) \wp$.
\end{center}
Thus,
\[\rmv_q(kx+k'x') = \sum_{v\in\{e_{\s(a_i)} \mid 1\le i\le l\}} (kk_{v}+k'h_{v}) = k\rmv_q(x) + k'\rmv_q(x'). \]
By (\ref{formula:in Subsect 6.1}), we find that $\rmv_q$ is a $\itLamb$-homomorphism.
\end{proof}

\subsubsection{\sectcolor The integrals of $\rmv_q$} \label{subsubsect:rmv res int}
Let $\itLamb$ be a finite-dimensional \checkLiu{$\mathbb{C}$}-algebra whose bound quiver is $\Q$;
Let
$$\scrA^p_{\itLamb} = (\scrA^1_{\itLamb}, [0,1], \kappa_{0}, \kappa_{1}, \mu)$$ such that
$\xi=\frac{1}{2}$; $\kappa_0(x) = \frac{x}{2}$; $\kappa_1(x) = \frac{x+1}{2}$; $\tau:\itLamb\to \checkLiu{\mathbb{C}}$ is a homomorphism of $\checkLiu{\mathbb{C}}$-algebras; and $\mu: \Sigma([0,1]) \to \RR^{\ge 0}$ is a Lebesgue measure;
and assume $\spI = \{[0, t] \mid 0\le t\le 1\}$.
Then for any path $q=a_1\cdots a_l$ in $(\Q, \I)$, we can calculate \checkLiu{the} \iposet
\[ \w{T}_{\spI}(f) = \{ ([0, t], \w{T}_{0}^{t}(f)) \mid 0 \le t\le 1 \} \]
given by the integral of $\rmv_q: \itLamb \to \kk$ as follows ($u_{e_v}$ is the number of all elements in $\{ \s(a_i)=v \mid 1\le i\le l\}$).
\begin{align*}
\w{T}_{\spI}(\rmv_q)
=\ & \bigg([0,t], (\scrA^1_{\itLamb}) \int_{[0,t]_{\itLamb}} \rmv_q \dd\mu \bigg) \\
=\ & \bigg([0,t],
\int_{[0,t]^{\times Z}} \bigg(\sum_{v \in \{\s(a_i) \mid 1\le i\le l\}} u_{e_v}k_{e_v} \bigg) \dd\mu
\bigg)\\
=\ & \bigg([0,t], \sum_{v \in \{\s(a_i) \mid 1\le i\le l\}} u_{e_v}\int_0^t k_{e_v}\dd k_{e_v} \bigg) \\
=\ & \bigg([0,t], \sum_{v \in \{\s(a_i) \mid 1\le i\le l\}} \frac{u_{e_v}t^2}{2} \bigg) \\
=\ & ([0,t], \frac{lt^2}{2}).
\end{align*}
where $Z$ is the number of all elements in $\{\s(a_i) \mid 1\le i\le l\}$ and $\sum\limits_{v \in \{\s(a_i) \mid 1\le i\le l\}}u_{e_v} = l$.
It follows that \checkLiu{the length $\ell(q)$ of $q$ is}
\begin{align}\label{formula:length}
\ell(q) = 2 \cdot (\scrA^1_{\itLamb}) \int_{[0,1]_{\itLamb}} \rmv_q \dd\mu,
\end{align}
where the upper limit of \checkLiu{the} integral is $t=1$.

\subsection{\sectcolor Global dimensions and multiple integrals} \label{subsect:gldim muti int}
In this subsection we provide a description of the global \checkLiu{dimension of a} gentle algebra using multiple integrals.

\subsubsection{\sectcolor Gentle algebras and their Koszul duals} \label{subsubsect:Koszul}
A finite-dimensional algebra $\kk\Q/\I$ is called a gentle algebra if its bound quiver $(\Q,\I)$ is a {\defines gentle pair}, that is, $(\Q,\I)$ satisfies the following conditions.
\begin{itemize}
\item[(1)] For any vertex $v\in\Q_0$, there are at most two arrows ending with $v$ and there are at most two arrows starting with $v$.
\item[(2)] If there are two arrows $\alpha_1,\alpha_2$ and there is an arrow $\beta$ such that $\t(\alpha_1)= \t(\alpha_2) = \s(\beta)$, then at least one of $\alpha_1\beta$ and $\alpha_2\beta$ lies in $\I$, and the other one does not lie in $\I$.
\item[(3)] If there is an arrow $\alpha_1$ and there are two arrows $\beta_1, \beta_2$ such that $\t(\alpha) = \s(\beta_1)=\s(\beta_2)$, then one of $\alpha\beta_1\in\I$ and $\alpha\beta_1\in\I$ lies in $\I$, and the other one does not lie in $\I$.
\item[(4)] Each generator of $\I$ is a path of length two.
\end{itemize}

A finite-dimensional algebra $A=\kk\Q/\I$ is called a {\defines gentle algebra} if its bound quiver $(\Q,\I)$ is a gentle pair.
Gentle algebra has become one of the most popular topics in algebra in recent years; see, for example,
\cite[etc]{ZH2016,CS2023,FGLZ2023,APS2023,CS2023b, QZZ2022,LiuZ2021,ZL2024,LGH2023,Kal2015}.

The {\defines Koszul dual} of a finite-dimensional algebra $A$ is given by the self-extension $\Ext_{A}(\overline{A}, \overline{A})^{\op}$ of $\overline{A}:=A/\rad(A)$ and it is isomorphic to the quadratic dual $A^{!}$.
For a finite-dimensional algebra $A$ of the form $\kk\Q/\I$, there is an explicit description of the quiver and relations of $A^{!}$, see, for instance, \cite{M2007}.
Let $A=\kk\Q/\I$ be a gentle algebra with finite global dimension, then its Koszul dual is isomorphic to $A^{!}=\kk\Q^{!}/\I^{!}$, where
\begin{itemize}
\item $\Q^{!} = (\Q^{!}_0, \Q^{!}_1, \s^{!}, \t^{!})$ such that $\Q^{!}_0=\Q_0$, $\Q^{!}_1=\Q_1$, $\s^{!}: \Q^{!}_1 \to \Q^{!}_0$ sends each element $\alpha \in \Q^{!}_1$ to the vertex $\t(\alpha)\in \Q^{!}_0$,
and $\t^{!}: \Q^{!}_1 \to \Q^{!}_0$ sends each element $\alpha \in \Q^{!}_1$ to the vertex $\s(\alpha)\in \Q^{!}_0$;
\item $\I^{!} = \langle \alpha\beta \mid \beta\alpha \checkLiu{\notin} \I \rangle$.
\end{itemize}
We denoted by $v^!$ the vertex in $\Q^{!}_0$ corresponding to $v\in\Q_0$ and by $\alpha^!$ the arrow in $\Q^{!}_1$ corresponding to $\alpha\in\Q_1$. Then $\I^{!}$ can be written as $\I^{!} = \langle \alpha^{!}\beta^{!} \mid \beta\alpha \checkLiu{\notin} \I\rangle$.


\subsubsection{\sectcolor Global dimensions of gentle algebras and multiple integrals}
\label{subsubset:gldim mult int}
For a gentle algebra $\kk\Q/\I$, we call a path $P=a_1\cdots a_{l}$ in $\Q$ a {\defines permitted thread} if
\begin{itemize}
\item[(p1)] $a_{i}a_{i+1}\notin\I$, $i\le 1\le l-1$;
\item[(p2)] For any arrow $\alpha\in\Q_1$ with $\s(\alpha)=\t(a_{l})$ (resp. $\t(\alpha)=\s(a_1)$),
we have $\alpha a_1\in \I$ (resp. $a_l\alpha\in \I$).
\end{itemize}
Dually, we can define the {\defines forbidden thread} $a_1\cdots a_{l}$ as follows.
\begin{itemize}
\item[(f1)] $a_{i}a_{i+1}\in\I$, $i\le 1\le l-1$;
\item[(f2)] For any arrow $\alpha\in\Q_1$ with $\s(\alpha)=\t(a_{l})$ (resp. $\t(\alpha)=\s(a_1)$),
we have $\alpha a_1\notin \I$ (resp. $a_1\alpha\notin \I$).
\end{itemize}
Let $A=\kk\Q/\I$ be a gentle algebra with finite global dimension.
We denote $\perm(A)$ (resp. $\forb(A)$) by the set of all permitted threads (resp. forbidden thread) on the bound quiver $(\Q,\I)$ of $A$.
From \cite[Theorem 5.10]{LGH2023}, we have the following fact
\begin{align}\label{formula:gent gldim}
\gldim A = \sup\limits_{F\in\forb(A)} \ell(F).
\end{align}
Moreover, one can check the following correspondence
\[\begin{matrix}
\forb(A) & -\!\!\!\to & \perm(A^!)  \\
a_1\cdots a_l & \mapsto & a_l^{!}\cdots a_1^{!}
\end{matrix}\]
is bijective. Then, by (\ref{formula:gent gldim}) and (\ref{formula:length}), we have
\begin{align}
\gldim A
& = \sup\limits_{F\in\forb(A)} \ell(F) \ \
= \sup\limits_{F\in\forb(A)} \ (\scrA^1_{A}) \int_{[0,1]_{A}} 2\rmv_F \dd\mu, \label{mult int 1} 
\\
& = \sup\limits_{P\in\perm(A^{!})} \ell(P)
= \sup\limits_{P\in\perm(A^{!})}(\scrA^1_{A^!}) \int_{[0,1]_{A^!}} 2\rmv_P \dd\mu, \label{mult int 2} 
\end{align}
where $\rmv_P: A^{!} \to \kk$ is the $\itLamb$-homomorphism given in the \ref{subsubsect:rmv res} of Subsection \ref{subsect:rmv};
$\mu$ is a Lebesgue measure defined on the $\sigma$-algebra $\Sigma([0,1]_{A^{!}})$;
and $\scrA^p_{A^!} = (\scrA^p_{A^!}, [0,1], \kappa_0, \kappa_1, \mu)$ is the category given in the \ref{subsubsect:rmv res int} of Subsection \ref{subsect:rmv}.
Two integrals in (\ref{mult int 1}) and (\ref{mult int 2}) can be written as two multiple integrals, that is,
we obtain the following theorem.

Let $\widetilde{l}_F$ (resp. $\widetilde{l}_P$ ) be the number of all elements in the set $\{\s(a_i)\mid 1\le i\le l_F\}$ (resp. the set $\{\s(a_i^{!})\mid 1\le i\le l_P\}$), where $F=a_1\cdots a_{l_F}$ (resp. $P=a_1^{!}\cdots a_{l_P}^{!}$) are the paths in $\forb(A)$ (resp. $\perm(A^!)$).
\begin{theorem} \label{fact:1}
Let $A$ be a gentle algebra with $\gldim A<+\infty$ and $A^!$ be its Koszul dual. Then
\[ \frac{\gldim A}{2}
= \sup_{F\in\forb(A)}   \mathop{\int\cdots\int}\limits_{[0,1]^{\times \widetilde{l}_F}} \rmv_F \dd\mu
= \sup_{P\in\perm(A^!)} \mathop{\int\cdots\int}\limits_{[0,1]^{\times \widetilde{l}_P}} \rmv_P \dd\mu,\]
\end{theorem}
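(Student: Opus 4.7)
The plan is to reduce the statement to the classical identity (\ref{formula:gent gldim}) for the global dimension of a gentle algebra, then interpret the length of a thread as a multiple integral of the associated vertex homomorphism. Concretely, I would start from the two identities (\ref{mult int 1}) and (\ref{mult int 2}) already derived in Subsection \ref{subsubset:gldim mult int}, which rewrite $\gldim A$ as a supremum of abstract integrals in the category $\scrA^1_{A}$ (respectively $\scrA^1_{A^!}$). It therefore suffices to convert each abstract integral $(\scrA^1_\itLamb)\int_{[0,1]_\itLamb} \rmv_q\,\dd\mu$ into the announced multiple integral $\int\cdots\int_{[0,1]^{\times \widetilde{l}_q}} \rmv_q\,\dd\mu$.

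The key computational input is the formula in \ref{subsubsect:rmv res int}, which shows that $\rmv_q$ is an affine function depending only on the coordinates indexed by the distinct starting vertices $\{\s(a_i) \mid 1\le i\le l\}$, and equals $\sum_v u_{e_v} k_{e_v}$ on $[0,1]_\itLamb$. First I would make this dependence precise: since $\rmv_q$ factors through the projection of $\itLamb$ onto the span of the idempotents $e_v$ for $v$ a starting vertex of some arrow of $q$, Fubini together with the definition of the product measure on $\Sigma(\II_\itLamb)$ reduces $(\scrA^1_\itLamb)\int_{[0,1]_\itLamb} \rmv_q\,\dd\mu$ to an integral over the $\widetilde{l}_q$-dimensional cube $[0,1]^{\times \widetilde{l}_q}$ (the unused coordinates contributing unit factors against the normalization $\mu([0,1])=1$). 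Then the linearity of $\rmv_q$ and the elementary evaluation $\int_0^1 k\,\dd k = \frac{1}{2}$, already carried out in \ref{subsubsect:rmv res int}, yield
\[ \mathop{\int\cdots\int}\limits_{[0,1]^{\times \widetilde{l}_q}} \rmv_q \dd\mu
= \sum_{v\in\{\s(a_i)\}} u_{e_v}\cdot \frac{1}{2}
= \frac{\ell(q)}{2}, \]
which is formula (\ref{formula:length}) rewritten as a multiple integral.

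Combining this with (\ref{formula:gent gldim}) gives the first equality of the theorem. For the second equality, I would invoke the explicit length-preserving bijection
\[ \forb(A) \to \perm(A^!), \qquad a_1\cdots a_l \mapsto a_l^!\cdots a_1^! \]
recalled in \ref{subsubset:gldim mult int}: since a forbidden thread $F$ in $A$ and its image $P\in\perm(A^!)$ share the same underlying sequence of source vertices (up to reversal), they have the same length and the same value $\widetilde{l}_F=\widetilde{l}_P$, and moreover the integrands $\rmv_F$ and $\rmv_P$ agree under the identification $\Q_0 = \Q^{!}_0$. Hence the supremum over $\forb(A)$ equals the supremum over $\perm(A^!)$, which concludes the proof.

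The main obstacle I expect is bookkeeping the reduction from the abstract, $n$-dimensional integral $(\scrA^1_\itLamb)\int_{[0,1]_\itLamb}$ to the genuine multiple integral over $[0,1]^{\times \widetilde{l}_q}$: one must verify that the coordinates associated with basis elements which are not among the idempotents $\{e_{\s(a_i)}\}$ drop out cleanly under the product measure $\mu_{\II_\itLamb}$ defined in Section \ref{sec:preliminaries}, and that the multiplicities $u_{e_v}$ of repeated source vertices correctly aggregate to the total length $\ell(q)=\sum_v u_{e_v}$. Once this bookkeeping is done, the rest of the argument is a formal chain of equalities between the already established pieces (\ref{formula:length}), (\ref{formula:gent gldim}), (\ref{mult int 1}), (\ref{mult int 2}) and the Koszul-dual bijection.
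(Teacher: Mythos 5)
Your proposal follows essentially the same route as the paper: it reduces the theorem to the cited identity $\gldim A = \sup_{F\in\forb(A)}\ell(F)$, evaluates the multiple integral of $\rmv_q$ over the cube as $\ell(q)/2$ exactly as in \ref{subsubsect:rmv res int}, and transfers the supremum to $\perm(A^!)$ via the length-preserving bijection $a_1\cdots a_l\mapsto a_l^!\cdots a_1^!$. One small inaccuracy: since $\s^{!}(a_i^{!})=\t(a_i)$, the source-vertex set of $P=a_l^!\cdots a_1^!$ is $\{\t(a_i)\}$ rather than $\{\s(a_i)\}$, so $\widetilde{l}_F$ and $\widetilde{l}_P$ need not coincide and the integrands need not "agree"; this is harmless, however, because the integral over $[0,1]^{\times\widetilde{l}}$ depends only on $\ell(q)=\sum_v u_{e_v}$ and not on the dimension of the cube, which is all the paper itself uses.
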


\subsection{\sectcolor  Global dimensions and Stieltjes integrals} \label{subsect:gldim Stie int}
\checkLiu{The Lebesgue-Stieltjes integration generalizes both the Riemann-Stieltjes integrations \cite{Stie1984} and the Lebesgue integrations \cite{L1928} within a broader measure-theoretic framework \cite{Carter2000}. This integral employs the Lebesgue-Stieltjes measure. In this subsection, we provide another description of the global dimensions of gentle algebra using Stieltjes integrals.}

\subsubsection{\sectcolor Stieltjes integrals}
Let $A=\kk\Q/\I$ be a gentle algebra and $A^!=\kk\Q^!/\I^!$ be its Koszul dual, and take two finite-dimensional algebras $\itLamb_1=\kk$ and $\itLamb_2=\kk$.
For any permitted thread $P=\alpha_1\cdots\alpha_l\in\perm(A^!)$, consider injection ${_{P}}\iso:\kk\to\kk$, $x\mapsto \ln x^l$. Then ${_{P}}\iso$ satisfies the following two conditions.
\begin{itemize}
\item[(1)] For any subset $S_1$ of $\itLamb_1$ in $\Sigma(\II_1)$,
the image $S_2=\mathrm{Im}({_{P}}\iso|_{S_1})$ of the restriction ${_{P}}\iso|_{S_1}:S_1\to S_2$ of ${_{P}}\iso$ is also a subset of $\itLamb_2$ in $\Sigma(\II_2)$, where $\II_1=[c_1,d_1]\subseteq\RR$ and $\II_2=[c_2,d_2]\subseteq\RR$ are two intervals; $c_1>0$; $c_2=\ln c_1^l$; and $d_2=\ln d_1^l$.
\item[(2)] The map $\fct:\kk \to \kk$, $x\mapsto \ln x^l$ is a function such that
\begin{align*} 
(\fct\circled\mu_1)(S_1) = (\mu_2\circled\omega|_{S_1})(S_1)\ (=\mu_2(S_2))
\end{align*}
for all $S_1\in\Sigma(\II_1)$, where $\mu_2:\Sigma(\II_2)\to\RR^{\ge 0}$ is a Lebesgue measure.
\end{itemize}

Notice that ${_{P}}\iso$ is an injection; then it has a left inverse which is written as ${_{P}}\isoinv$ in this paper.
One can check that ${_{P}}\isoinv$ preserve measure, and $\fct\circled\mu_1\circled{_{P}}\isoinv|_{\im({_{P}}\iso)}$ is a measure, see \cite[Lemma 3.4]{LGWLpre}.
Moreover, $\varphi: [c_1,d_1]\to\RR$ induced by $\fct$ which send each $x\in[c_1,d_1]$ to $\ln x$ is a monotone non-decreasing and right-continuous function.
Thus, the morphism in $\scrA^1_{\itLamb_1} = (\scrA^1_{\kk}, [c_1, d_1], \kappa_{c_1}, \kappa_{d_1}, \mu_1)$
\[{_{1}\w{T}_{(\kk,1,m)}}:
(\w{\bfS_{\mathrm{id}_{\kk}}(\II_1)}, \id_{\II_1}, \gamma_{\frac{c_1+d_1}{2}})
-\!\!\!\to
(\kk, 1, m) \]
and morphism in $\scrA^1_{\itLamb_2} = (\scrA^2_{\kk}, [c_2, d_2], \kappa_{c_2}, \kappa_{d_2}, \mu_1)$
\[{_{2}\w{T}_{(\kk,1,m)}}:
(\w{\bfS_{\mathrm{id}_{\kk}}(\II_2)}, \id_{\II_2}, \gamma_{\frac{c_2+d_2}{2}})
-\!\!\!\to
(\kk, 1, m) \]
are induced by $\kk$-linear maps which are of the forms $\w{\bfS_{\mathrm{id}_{\kk}}(\II_1)} \to \kk$ and $\w{\bfS_{\mathrm{id}_{\kk}}(\II_2)} \to \kk$ such that
\begin{align}
{{_1\w{T}}_{(\kk,1,m)}} (f)  \nonumber
& = (\scrA^1_{\itLamb_1}) \int_{[c_1,c_2]} f \dd(\fct\circled\mu_1) \nonumber \\
& = (\text{L-S})\int_{c_1}^{d_1} f(x) \dd\varphi
= (\text{L}) \int_{\varphi(c_1)}^{\varphi(d_1)} f({_{P}}\isoinv(x)) \dd\mu_2 \label{formula:LSint} \\
& = (\scrA^2_{\itLamb_2}) \int_{[\varphi(c_1),\varphi(d_1)]} (f\circled{_{P}}\isoinv) \dd\mu_2
= {{_2\w{T}}_{(\kk,1,m)}} (f\circled{_{P}}\isoinv) \nonumber
\end{align}
for all $f\in\w{\bfS_{\mathrm{id}_{\kk}}(\II_1)}$, cf. \cite[Theorem 3.5 or Subection 4.1]{LGWLpre}. Here,

\begin{center}
{\it $\fct\circled\mu_1 \simeq \varphi$ is a Lebesgue-Stieltjes measure. }
\end{center}

For the case for $c_1=1$, and take $\spI_1= \{[1, t] \mid 1\le t\le d_1\}$ and $\spI_2= \{[0, t] \mid 0\le t\le \varphi(d_2)\}$, then (\ref{formula:LSint}) provides the following equation
\begin{align} \label{formula:LSint spcase}
(\text{L-S})\int_{1}^{d_1} f(x) \dd\varphi
= (\text{L}) \int_{0}^{\varphi(d_1)} f({_{P}}\isoinv(x)) \dd\mu_2.
\end{align}

\subsubsection{\sectcolor Global dimensions of gentle algebras and Stieltjes integrals} \label{subsubsect:LSint}
Let $A=\kk\Q/\I$ be a gentle algebra with finite global dimension. Now, consider all permitted threads on the bound quiver $(\Q^{!}, \I^{!})$ of its Koszul dual $A^{!}=\kk\Q^{!}/\I^{!}$.
For any $P=\alpha_1\cdots\alpha_l\in\perm(A^{!})$, we define the map
\begin{align*}
\rmp_P:\ A^{!} = \bigoplus_{l=0}^{+\infty}\bigoplus_{\wp\in\Q_l\backslash\I} \kk\wp^{!}+\I^{!}
&  -\!\!\!\to \kk,  \\
\sum\limits_{l=0}^{+\infty} \sum\limits_{\wp\in\Q_1} k_{\wp^{!}}\wp^{!} + \I^{!}
& \mapsto k_P
\end{align*}
for any $\wp=a_1\cdots a_{l_{\wp}}$ in $\Q_l$ and for any path $\wp^{!}=a_{l_{\wp}}^{!}\cdots a_1^{!}$ in $\Q_l^{!}$.
By the maximality of permitted thread
(see the part \ref{subsubset:gldim mult int}, the condition (p1)),
and by the fact that $\I^{!}$ is generated by some paths of length $\ge 2$, the above is well defined.
One can check that $\rmp_P$ and its restriction $\rmp_P|_{\kk P}: \kk P\cong \kk=\itLamb_1 -\!\!\!\to\kk$ is a $\kk$-linear projection that belongs to $\w{\bfS_{\mathrm{id}_{\kk}}(\II_1)}$. If $f=\rmp_P|_{\kk P}$ in (\ref{formula:LSint}) and (\ref{formula:LSint spcase}), we have
\begin{align*}
{_1\w{T}_{\spI_1}}(\rmp_P|_{\kk P})
=\ \ & \{ ((1,t), {_1\w{T}_{1}^{d_1}}(\rmp_P|_{\kk P})) \mid 1\le t\le d_1 \} \\
\mathop{\longleftrightarrow}\limits^{\spC}_{1-1} \
& \bigg\{ \bigg(t, (\text{L-S})\int_1^t \rmp_P|_{\kk P}(x) \dd\varphi\bigg)
\ \bigg|\ 1\le t\le d_1 \bigg\} \\
=\ \ & \bigg\{ \bigg(\varphi(t), (\text{L})\int_{0}^{\varphi(t)} \rmp_P|_{\kk P}({_{P}}\isoinv(x)) \dd\mu_2\bigg)
\ \bigg|\ 0\le \varphi(t)\le \varphi(d_1) \bigg\} \\
\mathop{\longleftrightarrow}\limits^{\spC}_{1-1} \
& \{ ([0,\varphi(d_1)], {_2\w{T}_{0}^{\varphi(d_1)}}(\rmp_P|_{\kk P}\circled{_{P}}\isoinv)) \mid  0\le \varphi(t)\le \varphi(d_1) \}
= {_2\w{T}_{\spI_2}}(\rmp_P|_{\kk P}\circled{_{P}}\isoinv).
\end{align*}
Thus, for any $x=k_PP\in A^{!}|_{\kk P} \cong \kk$, we have $\rmp_P|_{\kk P}(x)=k_P$, and
\begin{align*}
(\text{L-S})\int_1^{t} k_P \dd\varphi
& = (\text{L})\int_{\varphi(1)}^{\varphi(t)} \rmp_P|_{\kk P}({_{P}}\isoinv(k_P)) \dd\mu_2 \\
& = (\text{R})\int_{0}^{\ln t^l} \mathrm{e}^{\frac{k_P }{l}} \dd k_P = l(t-1),
\end{align*}
where $\displaystyle(\mathrm{R})\int$ represents \checkLiu{Riemann} integration.
It follows that
\[ (\text{L-S})\int_1^2 \rmp_P|_{\kk P} \dd\varphi = l = \ell(\checks{P)} \]
in the case of $t=2$. 

Finally, by (\ref{mult int 2}), we obtain the following theorem.

\begin{theorem} \label{fact:2}
Let $A$ be a gentle algebra with $\gldim A<+\infty$ and $A^!$ be its Koszul dual. Then there is a family of Lebesgue-Stieltjes measures
\begin{align} \label{formula:fact:2}
  \checkLiu{
(\varphi_l: [1,t]\mapsto \ln t^l)_{
   \checks{l\in \{l(P)\mid P\in\perm(A^!) \} }
  }
}
\end{align}
such that
\[ \gldim A =  \sup_{P\in\perm(A^!)} (\mathrm{L\text{-}S})\int_1^2 \rmp_P|_{\kk P} \dd\varphi_\checks{l}. \]
\end{theorem}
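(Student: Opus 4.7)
The plan is to assemble Theorem \ref{fact:2} from two ingredients already in place: the length formula (\ref{mult int 2}), which expresses $\gldim A$ as $\sup_{P\in\perm(A^!)}\ell(P)$, and the explicit Lebesgue--Stieltjes computation carried out in \ref{subsubsect:LSint}. The task then reduces to producing, for each admissible length $l$, a measure $\varphi_l$ whose Stieltjes integral against $\rmp_P|_{\kk P}$ recovers $\ell(P)$ exactly when $l=\ell(P)$.

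First, I would fix an arbitrary $P=\alpha_1\cdots\alpha_l \in \perm(A^!)$ and form the injection ${_P}\iso:\kk\to\kk$, $x\mapsto\ln x^l$. Specializing the construction of \ref{subsubsect:LSint} to $c_1=1$, this $\iso$ induces on $\Sigma([1,t])$ a Lebesgue--Stieltjes measure $\varphi_l$ whose distribution function is $[1,t]\mapsto\ln t^l$; monotone non-decreasing, right-continuous, and thus a valid Stieltjes distribution. The family $(\varphi_l)_{l\in\{l(P)\mid P\in\perm(A^!)\}}$ is therefore well defined, and the hypothesis $\gldim A <+\infty$ ensures it is finite.

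Second, I would evaluate the Stieltjes integral of $\rmp_P|_{\kk P}$. Identifying $\kk P \cong \kk$ by $k_P P\mapsto k_P$, the substitution formula (\ref{formula:LSint spcase}) transports the integral on $[1,t]$ to a Lebesgue integral on $[0,\ln t^l]$ via the left inverse ${_P}\isoinv$, and on this interval it reduces to the Riemann computation
\[
(\mathrm{L\text{-}S})\int_{1}^{t} \rmp_P|_{\kk P}\,\dd\varphi_l
= (\mathrm{R})\int_{0}^{\ln t^l} \mathrm{e}^{k_P/l}\,\dd k_P
= l(t-1),
\]
already displayed in \ref{subsubsect:LSint}. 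Setting $t=2$ gives $(\mathrm{L\text{-}S})\int_{1}^{2}\rmp_P|_{\kk P}\,\dd\varphi_l = l = \ell(P)$. Combining with (\ref{mult int 2}) and taking the supremum yields the claim.

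The main technical obstacle is justifying that the categorical transport between $\scrA^1_{\itLamb_1}$ and $\scrA^1_{\itLamb_2}$ acts coherently on $\rmp_P|_{\kk P}$: one must check that $\rmp_P|_{\kk P}\in\w{\bfS_{\mathrm{id}_{\kk}}(\II_1)}$, that the map $\spC$ (cf.\ Proposition \ref{prop:spC-L}) is an intertwiner between the two integral partially ordered sets attached to $\rmp_P|_{\kk P}$ and to $\rmp_P|_{\kk P}\circled{_P}\isoinv$, and that the measure-preserving property of ${_P}\isoinv$ established in \cite[Lemma 3.4, Theorem 3.5]{LGWLpre} is exactly what validates the substitution step displayed above. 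Once this coherence is verified, the conclusion follows directly from the equality $\ell(P)=(\mathrm{L\text{-}S})\int_{1}^{2}\rmp_P|_{\kk P}\,\dd\varphi_l$ and formula (\ref{mult int 2}).
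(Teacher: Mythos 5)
Your proposal is correct and follows essentially the same route as the paper: you construct the measures $\varphi_l$ from the injections ${_P}\iso: x\mapsto \ln x^l$, reduce the Stieltjes integral to the Riemann computation $(\mathrm{R})\int_0^{\ln t^l}\mathrm{e}^{k_P/l}\,\dd k_P = l(t-1)$ via the substitution formula (\ref{formula:LSint spcase}), set $t=2$ to recover $\ell(P)$, and conclude with (\ref{mult int 2}). The coherence checks you flag at the end are exactly the points the paper delegates to \cite[Lemma 3.4, Theorem 3.5]{LGWLpre}, so no new content is needed there.
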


\section*{Acknowledgements}

The authors sincerely thank the editors and the referee for their detailed and valuable comments which greatly improve the article.

This work is supported by
the Postgraduate Research and Practice Innovation Program of Jiangsu Province (Grant No. KYCX24\_0123),
the Guizhou Provincial Basic Research Program (Natural Science) (Grant Nos. ZK[2024]YiBan066 and ZD[2025]085),
the Scientific Research Foundation of Guizhou University (Grant Nos. [2023]16, [2022]53 and [2022]65),
and the National Natural Science Foundation of China (Grant Nos. 12171207, 12401042, 62203442).







\begin{thebibliography}{10}

\bibitem{APL2021diff}
M.~Alvarez-Picallo and J.-S.~P. Lemay.
\newblock Cartesian difference categories.
\newblock {\em Log. Methods Comput. Sci.}, 17(3):Paper No. 23, 48, 2021.
\newblock
  \href{https://doi.org/10.46298/lmcs-17(3:23)2021}{DOI:10.46298/lmcs-17(3:23)2021}.

\bibitem{APS2023}
C.~Amiot, P.-G. Plamondon, and S.~Schroll.
\newblock A complete derived invariant for gentle algebras via winding numbers
  and {A}rf invariants.
\newblock {\em Selecta Math. (N.S.)}, 29(2):Paper No. 30, 36, 2023.

\bibitem{AS1987}
I.~Assem and A.~Skowro\'{n}ski.
\newblock Iterated tilted algebras of type {$\widetilde{\mathbb{A}}_n$}.
\newblock {\em Math. Z.}, 195(2):269--290, 1987.
\newblock
  \href{https://link.springer.com/article/10.1007/BF01166463}{DOI:10.1007/bf01166463}.

\bibitem{BCS2015diff}
R.~F. Blute, J.~R.~B. Cockett, and R.~A.~G. Seely.
\newblock \href{http://emis.icm.edu.pl/journals/TAC/volumes/22/23/22-23.pdf}
  {Cartesian differential storage categories}.
\newblock {\em Theor. Appl. Categ.}, 30:620--687, 2015.
\newblock
  \href{https://mathscinet.ams.org/mathscinet/article?mr=3346979}{AMS:MR3346979}.

\bibitem{Bor1994}
F.~Borceux.
\newblock {\em Handbook of categorical algebra 1: basic category theory}.
\newblock Cambridge University Press, Cambridge University Press, Cambridge,
  UK, 1994.
\newblock
  \href{https://doi.org/10.1017/CBO9780511614309}{DOI:10.1017/CBO9780511614309}.

\bibitem{CE1956}
H.~Cartan and S.~Eilenberg.
\newblock {\em Homological algebra}.
\newblock Princeton University Press, 1956.

\bibitem{Carter2000}
M.~Carter and B.~van Brunt.
\newblock {\em The Lebesgue-Stieltjes Integral}, pages 49--70.
\newblock Springer New York, New York, NY, 2000.
\newblock
  \href{https://doi.org/10.1007/978-1-4612-1174-7_4}{DOI:10.1007/978-1-4612-1174-7\_4}.

\bibitem{CS2023}
W.~Chang and S.~Schroll.
\newblock Exceptional sequences in the derived category of a gentle algebra.
\newblock {\em Selecta Math. (N.S.)}, 29(3):no.33, 2023.
\newblock
  \href{https://link.springer.com/article/10.1007/s00029-023-00836-z}{DOI:10.1007/s00029-023-00836-z}.

\bibitem{CS2023b}
W.~Chang and S.~Schroll.
\newblock A geometric realization of silting theory for gentle algebras.
\newblock {\em Math. Z.}, 303(3):Paper No. 67, 37, 2023.
\newblock
  \href{https://link.springer.com/article/10.1007/s00209-023-03207-8}{DOI:10.1007/s00209-023-03207-8}.

\bibitem{CL2018Cart-int}
J.~R.~B. Cockett and J.-S.~P. Lemay.
\newblock Cartesian integral categories and contextual integral categories.
\newblock In {\em Proceedings of the {T}hirty-{F}ourth {C}onference on the
  {M}athematical {F}oundations of {P}rogramming {S}emantics ({MFPS} {XXXIV})},
  volume 341 of {\em Electron. Notes Theor. Comput. Sci.}, pages 45--72.
  Elsevier Sci. B. V., Amsterdam, 2018.
\newblock
  \href{https://doi.org/10.1016/j.entcs.2018.11.004}{DOI:10.1016/j.entcs.2018.11.004}.

\bibitem{CL2018int}
J.~R.~B. Cockett and J.-S.~P. Lemay.
\newblock Integral categories and calculus categories.
\newblock {\em Math. Struct. Comp. Sci.}, 29(2):243--308, 2018.
\newblock
  \href{https://doi.org/10.1017/S0960129518000014}{DOI:10.1017/S0960129518000014}.

\bibitem{FGLZ2023}
C.~Fu, S.~Geng, P.~Liu, and Y.~Zhou.
\newblock On support {$\tau$}-tilting graphs of gentle algebras.
\newblock {\em J. Algebra}, 628:189--211, 2023.

\bibitem{LGWLpre}
H.~Gao, S.~Liu, Y.-Z. Liu, and Y.~Wang.
\newblock Normed modules and the stieltjes integrations of functions defined on
  algebras.
\newblock \href{http://arxiv.org/abs/2406.00161}{arXiv:2406.00161}, 2024.

\bibitem{IL2023ana}
S.~Ikonicoff and J.-S.~P. Lemay.
\newblock
  \href{https://cahierstgdc.com/wp-content/uploads/2023/04/IKONICOFF-LEMAY-LXIV-2.pdf}
  {Cartesian differential comonads and new models of {C}artesian differential
  categories}.
\newblock {\em Cah. Topol. G\'{e}om. Diff\'{e}r. Cat\'{e}g.}, 64(2):198--239,
  2023.
\newblock \href{https://mathscinet.ams.org/mathscinet/article?mr=4605864}
  {AMS:MR4605864}.

\bibitem{Kal2015}
M.~Kalck.
\newblock Singularity categories of gentle algebras.
\newblock {\em B. Lond. Math. Soc.}, 47(1):65--74, 2015.
\newblock
  \href{https://londmathsoc.onlinelibrary.wiley.com/doi/abs/10.1112/blms/bdu093}
  {DOI:10.1112/blms/bdu093}.

\bibitem{L1928}
H.~L. Lebesgue.
\newblock {\em Le\c{c}ons sur l'int\'{e}gration et la recherche des fonctions
  primitives: professees au college de France (deuxiemeedition)}.
\newblock Collection de monographies sur la theorie des fonctions, 1928.

\bibitem{Lei2023FA}
T.~Leinster.
\newblock A categorical derivation of lebesgue integration.
\newblock {\em J. Lond. Math. Soc.}, 107(6):1959--1982, 2023.
\newblock \href{https://doi.org/10.1112/jlms.12730}{DOI:10.1112/jlms.12730}.

\bibitem{Lemay2019}
J.-S.~P. Lemay.
\newblock Differential algebras in codifferential categories.
\newblock {\em J. Pure. Appl. Algebra}, 223(10):4191--4225, 2019.
\newblock \href{https://doi.org/10.1016/j.jpaa.2019.01.005}
  {DOI:10.1016/j.jpaa.2019.01.005}.

\bibitem{Lemay2023}
J.-S.~P. Lemay.
\newblock {Cartesian Differential Kleisli Categories}.
\newblock {\em Electronic Notes in Theoretical Informatics and Computer
  Science}, 3, 2023.

\bibitem{LGH2023}
Y.-Z. Liu, H.~Gao, and Z.~Huang.
\newblock Homological dimensions of gentle algebras via geometric models.
\newblock {\em Sci. China Math.}, 2023.
\newblock Publishing Onlie,
  \href{https://www.sciengine.com/doi/10.1007/s11425-022-2120-8}{DOI:10.1007/s11425-022-2120-8}.

\bibitem{LLHZpre}
Y.-Z. Liu, S.~Liu, Z.~Huang, and P.~Zhou.
\newblock Normed modules and the categorification of integrations, series
  expansions, and differentiations.
\newblock \href{http://arxiv.org/abs/2405.02777}{arXiv:2405.02777}, 2024.

\bibitem{LiuZ2021}
Y.-Z. Liu and C.~Zhang.
\newblock The geometric model of gentle one-cycle algebras.
\newblock {\em Bull. Malays. Math. Sci. Soc.}, 44(1):2489--2505, 2021.
\newblock
  \href{https://link.springer.com/article/10.1007/s40840-021-01078-y}{DOI:10.1007/s40840-021-01078-y}.

\bibitem{ZL2024}
Y.-Z. Liu and H.~Zhang.
\newblock There are no strictly shod algebras in hereditary gentle algebras.
\newblock {\em Commun. Algebra}, 53(3):1062--1075, 2024.
\newblock
  \href{https://doi.org/10.1080/00927872.2024.2400214}{DOI:10.1080/00927872.2024.2400214}.

\bibitem{LV2012}
J.-L. Loday and B.~Vallette.
\newblock {\em Algebraic operads}, volume 346 of {\em Grundlehren Math. Wiss.}
\newblock Berlin, New York: Springer-Verlag, 2012.
\newblock
  \href{https://zbmath.org/?format=complete&q=an:1260.18001}{ISBN:978-3-642-30361-6},
  \href{https://doi.org/10.1007/978-3-642-30362-3}
  {DOI:10.1007/978-3-642-30362-3}.

\bibitem{M2007}
R.~Mart{\'i}nez-Villa.
\newblock Introduction to {K}oszul algebras.
\newblock {\em Revista De La Union Matematica Argentina}, 48:67--95, 2007.

\bibitem{QZZ2022}
Y.~Qiu, Y.~Zhou, and C.~Zhang.
\newblock Two geometric models for graded skew-gentle algebras.
\newblock \href{http://arxiv.org/abs/2212.10369}{arXiv:2212.10369}, 2022.

\bibitem{Stie1984}
T.~J. Stieltjes.
\newblock \href{https://afst.centre-mersenne.org/item/AFST_1894_1_8_4_J1_0.pdf}
  {Recherches sur les fractions continues}.
\newblock {\em Ann. Fac. Sci. Toulouse}, VIII(3):1--122, 1984.
\newblock \href{https://mathscinet.ams.org/mathscinet -getitem?mr=1344720}
  {MR:1344720}.

\bibitem{ZH2016}
C.~Zhang and Y.~Han.
\newblock {B}rauer-{T}hrall type theorems for derived module categories.
\newblock {\em Algebr. Represent. Theory}, 19(6):1369--1386, 2016.
\newblock
  \href{https://link.springer.com/article/10.1007/s10468-016-9622-7}{DOI:10.1007/s10468-016-9622-7}.

\end{thebibliography}


\def\cprime{$'$}

\end{document}